\begin{document}
\setlength{\baselineskip}{16pt}

\parindent 0.5cm
\evensidemargin 0cm \oddsidemargin 0cm \topmargin 0cm \textheight
22cm \textwidth 16cm \footskip 2cm \headsep 0cm

\newtheorem{theorem}{Theorem}[section]
\newtheorem{lemma}{Lemma}[section]
\newtheorem{proposition}{Proposition}[section]
\newtheorem{definition}{Definition}[section]
\newtheorem{example}{Example}[section]

\newtheorem{remark}{Remark}[section]
\newtheorem{corollary}{Corollary}[section]
\newtheorem{property}{Property}[section]
\numberwithin{equation}{section}
\newtheorem{mainthm}{Theorem}
\newtheorem{mainlem}{Lemma}

\numberwithin{equation}{section}

\def\p{\partial}
\def\I{\textit}
\def\R{\mathbb R}
\def\C{\mathbb C}
\def\u{\underline}
\def\l{\lambda}
\def\a{\alpha}
\def\O{\Omega}
\def\e{\epsilon}
\def\ls{\lambda^*}
\def\D{\displaystyle}
\def\wyx{ \frac{w(y,t)}{w(x,t)}}
\def\imp{\Rightarrow}
\def\tE{\tilde E}
\def\tX{\tilde X}
\def\tH{\tilde H}
\def\tu{\tilde u}
\def\d{\mathcal D}
\def\aa{\mathcal A}
\def\DH{\mathcal D(\tH)}
\def\bE{\bar E}
\def\bH{\bar H}
\def\M{\mathcal M}
\renewcommand{\labelenumi}{(\arabic{enumi})}

\def\disp{\displaystyle}
\def\undertex#1{$\underline{\hbox{#1}}$}
\def\card{\mathop{\hbox{card}}}
\def\sgn{\mathop{\hbox{sgn}}}
\def\exp{\mathop{\hbox{exp}}}
\def\OFP{(\Omega,{\cal F},\PP)}
\newcommand\JM{Mierczy\'nski}
\newcommand\RR{\ensuremath{\mathbb{R}}}
\newcommand\CC{\ensuremath{\mathbb{C}}}
\newcommand\QQ{\ensuremath{\mathbb{Q}}}
\newcommand\ZZ{\ensuremath{\mathbb{Z}}}
\newcommand\NN{\ensuremath{\mathbb{N}}}
\newcommand\PP{\ensuremath{\mathbb{P}}}
\newcommand\abs[1]{\ensuremath{\lvert#1\rvert}}

\newcommand\normf[1]{\ensuremath{\lVert#1\rVert_{f}}}
\newcommand\normfRb[1]{\ensuremath{\lVert#1\rVert_{f,R_b}}}
\newcommand\normfRbone[1]{\ensuremath{\lVert#1\rVert_{f, R_{b_1}}}}
\newcommand\normfRbtwo[1]{\ensuremath{\lVert#1\rVert_{f,R_{b_2}}}}
\newcommand\normtwo[1]{\ensuremath{\lVert#1\rVert_{2}}}
\newcommand\norminfty[1]{\ensuremath{\lVert#1\rVert_{\infty}}}

\title{The Morse Smale property for time-periodic scalar reaction-diffusion equation on the circle}

\author {
\\
Tingting Su
  $\,$ and Dun Zhou
  \thanks{Corresponding author. Partially supported by NSF of China No.11971232, 12071217} \\
 School of Mathematics and Statistics,
 \\ Nanjing University of Science and Technology
 \\ Nanjing, Jiangsu, 210094, P. R. China
  \\
}
\date{}

\maketitle

\begin{abstract}
We study the Morse-Smale property for the following scalar semilinear parabolic equation on the circle $S^1$,
\begin{equation*}
u_{t}=u_{xx}+f(t,u,u_{x}),\,\,t>0,\,x\in S^{1}=\mathbb{R}/2\pi \mathbb{Z},
\end{equation*}
where $f$ is a $C^2$ function and $T$-periodic in $t$.  Assume that the equation admits a compact global attractor $\mathcal{A}$ and let $P$ be the Poincar\'{e} map of this equation. We exclude homoclinic connection for hyperbolic fixed points of $P$ and prove that stable and unstable manifolds for any two heteroclinic hyperbolic fixed points of $P$ intersect transversely. Further, this equation admits the Morse-Smale property provided that all $\omega$-limit sets (in the case $f(t,u,u_x)=f(t,u,-u_x)$, the $\omega$-limit set is just a fixed point) of the corresponding Poincar\'{e} map are hyperbolic.
\end{abstract}

\section{Introduction}

The study of the structural stability of differential equations can be traced back to the work of Poincar\'{e}  on the three-body problem in celestial mechanics and was first appeared in Andronov and Pontrjagin \cite{37And}. Later, the structural stability was extensively studied in the frame of vector fields on compact smooth manifolds of finite dimensions. A system or equation is structurally stable means that the qualitative behavior of the trajectories can persistent by small perturbations(to be exact $C^1$-small perturbations). In 1960(see \cite{60Sma}), Smale proposed the concept of Morse-Smale system: (1) the non-wandering set consists only in a finite number of hyperbolic equilibria and hyperbolic orbits; (2) the stable and unstable manifolds of all the connecting hyperbolic fixed points and periodic orbits are all intersect transversely. The Morse-Smale system on any finite dimensional compact manifold is structurally stable was proved by Palis and Smale \cite{Palis1968,Palis-Smale,Smale}. Particularly, for a 2-dimensional compact manifold, Morse-Smale property is established for generic $C^1$-vector fields; under some dissipative assumptions, the generic Morse-Smale property is also correct for vector fields on $\mathbb{R}^2$. Nevertheless, Morse-Smale vector fields no longer dense for a general 3-dimensional system. In fact, the existence of transverse homoclinic connecting hyperbolic orbits lead to chaotic behaviors(see \cite{Smale-65}).

As for infinite systems that generated by partial differential equations, results related to structural stability and local stability are more recent. Similar to the finite dimensional case of vector fields on compact manifold or on $\mathbb{R}^n$, one can also define Morse-Smale property for infinite systems. It is proved that Morse-Smale systems generated by disspative parabolic equations or more generally by disspative in finite time smoothing equations are sturctural stable
(see \cite{Oliva2000,84Hale}). To our best knowledge, the first relevant result on the structural stability of parabolic equations should belong to Henry \cite{85Hen} for the following equation
\[
u_t=u_{xx}+f(x,u,u_x),\quad (t,x)\in(0,\infty)\times (0,1),
\]
with Dirichlet, Neumann or Robin boundary conditions. It was proved in \cite{85Hen}(see also Angenent\cite{86Ang}) that the stable and unstable manifolds of two connecting hyperbolic fixed points of the above equation intersect transversely; and this result combine with already proved gradient structure in \cite{Zelen} imply that the generic Morse-Smale property of such equation. After then, Chen, et al. in \cite{92Chen} obtained the Morse-Smale property for $f=f(t,x,u,u_x)$ of $t$ is periodic under the assumption that all the fixed points of the corresponding Poincar\'{e} map are hyperbolic. It is worth pointing out that scalar semilinear parabolic equation defined on a bounded domain $\Omega\subset \mathbb{R}^n$, $n\geq 2$, is no longer gradient in general. Nonetheless, for special nonlinear term $f=f(x,u)$, the system is also a gradient system, and hence, Morse-Smale property for some special equations on bounded domain $\Omega\subset \mathbb{R}^n$, $n\geq 2$ established(see \cite{BP97}). For general $f=f(x,u,\nabla u)$, the generic tranversality of heteroclinic and homoclinic orbits was proved in the recent work of Brunovsk\'{y}, et al in \cite{BJR}.

As it mentioned before, Morse-Smale system is an important class of structural stable system. In spite of this fact, there are not many results that can be used to verify if Morse-Smale properties holds for a given dynamical system, especially for that high-dimensional or infinite-dimensional systems. In this situation, it is necessary and important to select some typical systems to study their Morse-Smale properties. Here, we focus on the following typical infinite-dimensional system
\begin{equation}\label{r-d-eq}
\begin{cases}
u_{t}=u_{xx}+f(t,u,u_{x}),\quad t>0,x\in S^{1}=\mathbb{R}/2\pi \mathbb{Z},\\
u(0,x)=u_{0},
\end{cases}
\end{equation}
where $f$ is a $C^2$ function with $f(t+T,u,u_{x})=f(t,u,u_{x})$,  $T>0.$  Our goal is to understand the global stability of the above equation \eqref{r-d-eq}, and obtain it's Morse-Smale property.

Although the above system is not a gradient system, by virtue of the powerful tool of zero number function (see  \cite{88AngFie,Massatt1986,88Mat}), the flow of \eqref{r-d-eq} still has particular properties similar to one-dimensional or two-dimensional systems(depending on the form of nonlinear term $f$). For instance, in autonomous case, if $f=f(x,u,u_x)$ is general dependent, the celebrated Poincar\'{e}-Bendixson type theorem was obtained in  Fiedler and Mallet-Paret \cite{Fiedler}, that is, any $\omega$-limit set of  \eqref{r-d-eq} is either a single periodic orbit or it consists of equilibria and connecting (homoclinic and heteroclinic) orbits. Based on this conclusion, transversality of the stable and unstable manifolds of hyperbolic equilibria and periodic orbits, and generic hyperbolicity were obtained in \cite{08Cza,10Jol362}. After then, Joly and Raugel \cite{10Jol27} established the generic Morse-Smale property for the system with a compact global attractor. For the special case, $f=f(u,u_x)$, any $C^1$-bounded solution of \eqref{r-d-eq} approaches a set of functions of the form
\[
\Sigma \phi=\{\phi(\cdot+a)\,|\, a\in S^1\},
\]
where $\phi$ satisfies the following equation
\begin{equation*}
  \phi_{xx}+c\phi_x+f(\phi,\phi_x)=0,\quad x\in S^1,
\end{equation*}
for some $c\in \mathbb{R}$(see \cite{88AngFie,Massatt1986, 88Mat}). In other words, any $C^1$-bounded solution of \eqref{r-d-eq} approximates either a constant equilibrium (i.e., $\phi$ is constant and $c=0$), a rotating wave (i.e., $\phi$ is nonconstant and $c\neq 0$) or a one-dimensional manifold of standing waves (i.e., $\phi$ is nonconstant and $c=0$). Particularly, if $f(u,u_x)=f(u,-u_x)$, then any bounded solution is asymptotic to an equilibrium (see \cite{88Mat}).

As for \eqref{r-d-eq} is a time periodic equation, that is, $f(t+T,u,u_{x})=f(t,u,u_{x})$ for some $T>0$,  Sandstede and Fiedler \cite{92San} proved  that any  $\omega$-limit set generated by bounded solution of \eqref{r-d-eq} can be viewed as a subset of the two-dimensional torus $\mathcal{T}^1\times S^1$ carrying a linear flow. Particularly, if $f(t,u,u_x)=f(t,u,-u_x)$, then any $\omega$-limit set of bounded solution of \eqref{r-d-eq} is a $T$-periodic orbit (see Chen and Matano \cite{89Che}). For $f=f(t,u,u_x)$ of \eqref{r-d-eq} is almost periodic in $t$, the long time behavior of bounded solutions of \eqref{r-d-eq} are clearly characterized, see the series work by Shen, et al \cite{16She,19Shen,20She,22She}.

There are three reasons driving us to consider the Morse-Smale property of $T$-periodic equation \eqref{r-d-eq}: there are few results discussing the Morse-Smale properties for periodic systems; another is the delicate example constructed in \cite{92San} shown that any time-periodic planar vector field can be embedded into \eqref{r-d-eq} with certain nonlinearity $f=f(t,x,u,u_x)$ (see also the comments in \cite{Hale}), the appearance of chaotic behaviors exclude the possibility of structural stable of periodic system \eqref{r-d-eq} with general $f=f(t,x,u,u_x)$;  finally  the recent work by Shen, et al \cite{21She} displays that any non-wandering point of \eqref{r-d-eq} is a limit point, which pave the way for proving the Morse-Smale property of \eqref{r-d-eq}.

Assume that $f\in C^2(\mathbb{R}\times\mathbb{R}\times\mathbb{R},\mathbb{R})$ and $X^{\alpha}$($\frac{3}{4}<\alpha<1$) is the fractional power space associated with the operator $u\rightarrow -u_{xx}:H^{2}(S^{1})\rightarrow L^{2}(S^{1})$, denoted by $A$, then $X^{\alpha}$ is compactly embedded in $C^{1}(S^{1})$. By the standard theory for parabolic equations (see \cite{81Hen}), for any $u_0\in X^{\alpha}$, \eqref{r-d-eq} defines (locally) a unique solution $u(t,\cdot;u_0)$ in $X^{\alpha}$ with $u(0,x;u_0)=u_0(x)$. It then follows from \cite{81Hen} and the standard priori estimates for parabolic equations, if $u(t,\cdot;u_0)$ is bounded in $X^{\alpha}$ in the existence interval of the solution, then $u$ is a globally defined classical solution(see Section \ref{global-attractor} for more detailed discussions).

 Let $P$ be the Poincar\'{e} map \eqref{r-d-eq}, we have the following

 \begin{itemize}

 \item[$\bullet$] (see {\bf Theorem} \ref{index-cor}) {\it There is no homoclinic connecting hyperbolic fixed point of $P$.}

 \item[$\bullet$] (see {\bf Theorem} \ref{tranver-theor}) {\it  Assume that $\varphi_{+}, \varphi_{-}$ are hyperbolic fixed points of $P$, then the stable manifold of $\varphi_{+}$ and unstable manifold of $\varphi_{-}$ intersect transversely in $X^{\alpha}$.}

 \item[$\bullet$] (see {\bf Theorem} \ref{morse-smale-theor}) {\it Assume moreover \eqref{r-d-eq} admits a compact global attractor and all the $\omega$-limit sets of $P$ are hyperbolic, then the discrete semiflow $\{P^{n}, n\geq0\}$ is Morse-Smale.}
\end{itemize}

We have some comments in the following.

Firstly, our result is not a simple extension for autonomous systems in \cite{08Cza, 10Jol27} to periodic systems. In fact, the main difficulty to prove the transversality of invariant manifolds between two hyperbolic fixed points is to construct suitable spaces such that the spaces can span the full space. In autonomous case, for hyperbolic periodic orbit, the whole orbit is both belong to the local stable manifolds and local unstable manifolds of itself, which is very important for the proving of transversality for connecting hyperbolic equilibrium elements(hyperbolic equilibrium or hyperbolic periodic orbit). While in our periodic case, this property no longer holds true, as we need to consider the transversality on Poincar\'{e} section. The proof of Theorem \ref{tranver-theor} depends heavily on the recent progress for hyperbolic $\omega$-limit set in  \cite[Theorem 2.1]{92San} and \cite[Lemma 3.2, Theorem 5.1]{19Shen}(see also Remark \ref{hyp-eigenspace}).

Secondly, although the methods in \cite{89Che} deed provide some help for our proof, our result still cannot be viewed as a simple generalization of results in \cite{89Che} for \eqref{r-d-eq} with separated boundary conditions to periodic boundary condition. As the corresponding linearized systems enjoy different properties, for instance, the Floquet multipliers and their corresponding eigenspaces for the current system are much more complicated than that in \cite{89Che}.

Thirdly, the recent characterization of non-wandering points in \cite{21She} makes it easier to verify the conditions for Morse-Smale property.

Fourth, the zero number function plays an essential role in characterization the global stability and longtime behavior of \eqref{r-d-eq}. Similar functions  widely exist in many other systems, see \cite{DLZ,Ma-Sm,MS,Smillie1984,VMV}. From the view of dynamical systems, such function can help us to reduce a high-dimensional or infinite dimensional system to lower or finite dimensional system. Here, the zero number function is always used throughout the whole paper.

The paper is organized as follows. In Section 2.1, we impose some suitable conditions on \eqref{r-d-eq}, so that the equation admits local unique classical solution in a suitable space for a given initial data and admits a compact global attractor; then in Section 2.2, we list some properties of zero number function, and its relationship between Floquet eigenspaces; in Section 2.3, we introduce some concepts of discrete Morse-Smale system. In Section 3, we study the long time behavior of solutions of linear aysmptotic periodic parabolic equations on the circle. In Section 4, we present our main results and give detailed proof, we first exclude the existence of homoclinic connecting hyperbolic periodic orbits; and then prove the transversality of stable and unstable manifolds between heteroclinic connecting hyperbolic periodic orbits; finally obtain the Morse-Smale property under the assumption that all  $\omega$-limit sets of $P$ in the non-wandering set are hyperbolic. In the Appendix, we list some important results from \cite{92Chen} that often used in Section 3 and Section 4.

\section{Preliminaries}
In this section, we introduce some notions and concepts which will be often used later.
\subsection{Existence of global attractor}\label{global-attractor}

To carry out our research work, we assume that \eqref{r-d-eq} is dissipative, that is, the system \eqref{r-d-eq} admits a global attractor $\mathcal{A}$. For this purpose, we make some restrictions on $f$ in equation\eqref{r-d-eq}:
\begin{itemize}
  \item [(a.1)]$f:\mathbb{R}\times \mathbb{R} \times \mathbb{R}\rightarrow \mathbb{R}$ is of class $C^{2},$
  \item [(a.2)]there exist $0\leq\gamma<2$ and a continuous function $\eta:[0,\infty)\rightarrow[0,\infty)$ such that

\begin{equation}\label{assum-f}
|f(t,y,z)|\leq \eta(r)(1+|z|^{\gamma}),\quad (t,y,z)\in \mathbb{R}\times[-r,r]\times \mathbb{R} \quad\mbox{for each}\quad r>0,
\end{equation}

\begin{equation}\label{assum-yf}
yf(t,y,0)<0,\quad (t,y)\in \mathbb{R} \times \mathbb{R},\quad |y|\geq \delta,\quad\mbox{for some}\quad \delta>0.
\end{equation}
\end{itemize}

The above conditions can guarantee that the system has a global attractor in a suitable space(see \cite[Section 2]{10Jol27}). For the sake of completeness, we give a detailed explanation in the following.

Define the operator $A:H^{2}(S^{1})\subset L^{2}(S^{1})\rightarrow L^{2}(S^{1}),$
$$Au=-u_{xx},\quad u\in H^{2}(S^{1}).$$
Since $A$ is a non-negative definite self-adjoint operator, it is a non-negative sectorial operator. Hence, we define fractional power spaces
$$ X^{\alpha}=D(A^{\alpha}),\quad\alpha\geq0,$$
with norm $||u||_{X^{\alpha}}=||A^{\alpha}u||_{L^{2}(S^{1})}+||u||_{L^{2}(S^{1})}, u\in X^{\alpha}$.
Note that $X^{0}=L^{2}(S^{1}),X^{1}=H^{2}(S^{1}),$
$$X^{\alpha}=[L^{2}(S^{1}),H^{2}(S^{1})]_{\alpha}=H^{2\alpha}(S^{1}),\quad\alpha\in (0,1).$$
Since $H^{2}(S^{1})$ is compactly embedded into $L^{2}(S^{1})$ , $A$ has compact resolvent.

The equation \eqref{r-d-eq} can be rewritten into the following  abstract Cauchy problem in $L^{2}(S^{1})$,
\begin{align}\label{abstract-Cauchy-eq}
\left\{
\begin{aligned}
&u_{t}+Au=F(t,u),\\
&u(0)=u_{0},
\end{aligned}
\right.
\end{align}
where $F(t,u)(x)=f(t,u(x),u_{x}(x)), x\in S^{1}$. For a fixed $\alpha\in(\frac{3}{4},1)$, by virtue of $f\in C^{2}$, $F$ is locally H\"{o}lder continuous in $t$ and locally Lipschitz continuous in $u$ on open subset $U\subset\mathbb{R}^{+}\times X^{\alpha}$,  and for every bounded set $B\subset U$, the image $F(B)$ is bounded in  $X^{0}$. Therefore, by \cite[Theorem 3.3.3, Theorem 3.3.4]{81Hen}, for each $u_{0}\in U$, there exists a unique local forward $X^{\alpha}$ solution(classical solution) defined on a maximal interval $[0,\tau_{u_{0}})$, where $\tau_{u_{0}}=+\infty $, or
$$
\tau_{u_{0}}<+\infty \quad\mbox{and} \quad \limsup_{t\rightarrow\tau_{u_{0}}}||u(t,u_{0})||_{ X^{\alpha}}=+\infty.
$$

By using \eqref{assum-yf} and the maximum principle for parabolic equations, we know that if $||u_{0}||_{L^{\infty}(S^{1})}\leq \delta+R$ for some $R\geq0$, then there exists a positive constant $\zeta(\delta,R)$ such that
\begin{equation}\label{u-norm-control}
||u(t,u_{0})||_{L^{\infty}(S^{1})}\leq \delta+Re^{-\zeta t},\quad t\in[0,\tau_{u_{0}}).
\end{equation}
For simplicity, we assume that $1<\gamma<2$ in \eqref{assum-f}, as a matter of fact, if there exists $0<\gamma<1$ such that $$|f(t,y,z)|\leq \eta(r)(1+|z|^{\gamma}),$$
 then by using Young's inequality one has
$$
|z|^{\gamma}\leq\frac{\gamma}{\gamma+1}|z|^{\gamma+1}+\frac{1}{\gamma+1},
$$
that is, there exists a continuous function $\eta^{'}$
such that
$$
|f(t,y,z)|\leq \eta^{'}(r)(1+|z|^{\gamma+1}).
$$
Based on \eqref{assum-f} and \eqref{u-norm-control}, we obtain
\begin{equation}\label{F-norm-control}
\begin{aligned}
||F(t,u(t,u_{0}))||_{X^{0}}&=(\int_{S^{1}} |f(t,u,u_{x})|^{2}dx)^{\frac{1}{2}}\\
&\leq c(||u(t,u_{0})||_{L^{\infty}(S^{1})})(1+||u(t,u_{0})||^{\gamma}_{W^{1,2\gamma}}),\quad t\in (0,\tau_{u_{0}}).
\end{aligned}
\end{equation}
We now estimate $||u(t,u_{0})||_{W^{1,2\gamma}}$, let $r$ large enough and  $\max\{\frac{\gamma}{2},\alpha\}<\beta<1$ such that
$$
\max\{\frac{r-2\gamma}{\gamma(r-2)}, \frac{1}{2\beta}\}<\theta<\frac{1}{\gamma},
$$
the following inequality is satisfied
\begin{equation}
\begin{aligned}
\|u(t,u_{0})\|_{W^{1,2\gamma}(S^{1})}\leq c_{\theta}\|u(t,u_{0})\|^{\theta}_{H^{2\beta}(S^{1})}\|u(t,u_{0})\|^{1-\theta}_{L^{r}(S^{1})},\quad t\in(0,\tau_{u_{0}}), \nonumber
\end{aligned}
\end{equation}
(see \cite[Proposition 1.2.2]{00Cho}).
Given that $L^{\infty}(S^{1})$ is continuously embedded in $L^{r}(S^{1}),$
\begin{equation}\label{u-norm-control-2}
\begin{aligned}
\|u(t,u_{0})\|_{W^{1,2\gamma}(S^{1})}\leq \tilde{c_{\theta}}\|u(t,u_{0})\|^{\theta}_{H^{2\beta}(S^{1})}\|u(t,u_{0})\|^{1-\theta}_{L^{\infty}(S^{1})},\quad t\in(0,\tau_{u_{0}}),
\end{aligned}
\end{equation}
by virtue of \eqref{u-norm-control}, \eqref{F-norm-control} and \eqref{u-norm-control-2}
\begin{equation}
\begin{aligned}
||F(t,u(t,u_{0}))||_{X^{0}}\leq \tilde{c}(||u(t,u_{0})||_{L^{\infty}(S^{1})})(1+||u(t,u_{0})||^{\theta\gamma}_{X^{\beta}}),\quad t\in(0,\tau_{u_{0}}), \nonumber
\end{aligned}
\end{equation}
with $\theta\gamma<1$.
It then follows from \cite[Corollary 3.3.5]{81Hen} that the forward $X^{\beta}$ solution of \eqref{abstract-Cauchy-eq} exists globally in time, i.e. $\tau_{u_{0}}=\infty.$

We now define the Poincar\'{e} map $P:\Psi\rightarrow X^{\alpha}$ as the following
$$P(u_{0})=u(T,x;u_{0}),\quad u_{0}\in \Psi,$$
in which $\Psi=\{u_{0}\in X^{\alpha}|\tau_{u_{0}}>T\}$. Let $\Psi_{0}=X^{\alpha},\Psi_{1}=\Psi,\Psi_{n+1}=P^{-1}(\Psi_{n}),n\geq1,$
where $P^{-1}(\Psi_{n})$ is the preimage of $\Psi_{n}$ under the map $P$, define $P^{n}:\Psi_{n}\rightarrow X^{\alpha}$, where $P^{-n}(S)$ is the preimage of $S\subset X^{\alpha}$ under the map $P^{n}$. The solution $u(nT,x;u_{0})=P^{n}(u_{0})$ of equation \eqref{r-d-eq} generates a discrete semiflow $\{P^{n}:\Psi_{n}\rightarrow X^{\alpha}, n\geq0\}$ in $X^{\beta}$.

Noticing that there exist local forward  $X^{\alpha}$ solution and  $X^{\alpha}$ enter $X^{1}$, for $t>0$, we may consider $X^{\alpha}$ global solution as $X^{\beta}$ global solution.

Recall that \eqref{u-norm-control} indicates
$$\limsup_{t\rightarrow+\infty}||u(t,u_{0})||_{L^{\infty}(S^{1})}\leq \delta,$$
then by \cite[Theorem 4.1.1]{00Cho}, there exists a constant $K_{1}$, such that
$$\limsup_{t\rightarrow+\infty}||u(t,u_{0})||_{X^{\alpha}}\leq K_{1}.$$ Therefore, the discrete semiflow $\{P^{n},n\geq0\}$ is point dissipative in $X^{\alpha}$. Recall that $A$ has compact resolvent, by\cite[Theorem 3.3.1]{00Cho}, $P^{n}$ is a compact map in $X^{\alpha}$, and hence the discrete semiflow $\{P^{n},n\geq0\}$ has a global attractor $\mathcal{A}$ in $X^{\alpha}$.

\subsection{Zero number function and Floquet theory for fixed point}
Given a $C^{1}$ function $\varphi:S^{1}\rightarrow \mathbb{R},$ the zero number of $\varphi$ is defined
$$z(\varphi)=card\{x\in S^{1}:\varphi(x)=0\}.$$
According to \cite{88Ang,82Mat,98Chen}, we have the following lemma.
\begin{lemma}\label{zero-number-lem}
Let $v(t,x)$ be a nontrivial classical solution of the linear parabolic equation
\begin{equation}
\left\{
\begin{aligned}
&v_{t}=v_{xx}+a(t,x)v_{x}+b(t,x)v,\quad x\in S^{1},\\
&v(0)=v_{0}, \nonumber
\end{aligned}
\right.
\end{equation}
where $a,b$ are bounded continuous functions, then the zero number of $v(t)$ has the following properties:\\
(i)$z(v(t))<\infty$ for any $t>0$,\\
(ii)if $0\leq t_{1}\leq t_{2}$, then $z(v(t_{1}))\geq z(v(t_{2}))$,\\
(iii) $z(v(t))$ can drops only finite many times, and there exists $T_{0}>0$ such that $v(t)$ has only simple zeros in $S^{1}$ as $t\geq T_{0}$.
\end{lemma}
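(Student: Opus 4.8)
The plan is to study the nodal set $Z=\{(t,x)\in(0,\infty)\times S^{1}:v(t,x)=0\}$ directly, within the Sturmian framework of Angenent \cite{88Ang} and Matano \cite{82Mat} (see also \cite{98Chen}). First I would record the regularity gain supplied by the parabolic smoothing effect: although $v_{0}$ may be rough, the solution $v(t,\cdot)$ becomes as smooth in $x$ as the coefficients permit for every $t>0$, so that the spatial vanishing order of $v(t,\cdot)$ at a point is well defined. The backbone of the whole argument is the local structure of zeros. If $v(t_{0},x_{0})=0$ with $t_{0}>0$, then $v$ cannot vanish to infinite spatial order at $(t_{0},x_{0})$, since otherwise the unique continuation property for the scalar parabolic operator $\partial_{t}-\partial_{xx}-a\,\partial_{x}-b$ would force $v\equiv 0$, contradicting nontriviality. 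Hence there is a finite integer $n=n(t_{0},x_{0})\geq 1$ with
\[
v(t_{0},x)=c_{n}(x-x_{0})^{n}+o(|x-x_{0}|^{n}),\qquad c_{n}\neq 0,
\]
so that every zero of $v(t_{0},\cdot)$ is isolated.

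This local structure immediately yields (i): a collection of isolated zeros on the compact manifold $S^{1}$ is necessarily finite, whence $z(v(t))<\infty$ for each $t>0$.

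For (ii) I would separate the zeros of $v(t,\cdot)$ into \emph{simple} zeros ($v=0$, $v_{x}\neq 0$) and \emph{multiple} zeros ($v=v_{x}=0$). Near a simple zero the implicit function theorem produces a $C^{1}$ curve $x=\xi(t)$ of zeros, so $z(v(t))$ stays constant across any time at which all zeros are simple; the count can therefore change only at an exceptional time $t_{0}$ at which $v(t_{0},\cdot)$ possesses a multiple zero. The decisive step, which I expect to be the main obstacle, is the \textbf{dropping lemma}: across such a $t_{0}$ the number of zeros strictly decreases. I would establish this by the local normal-form and blow-up analysis near a degenerate zero $(t_{0},x_{0})$ of order $n\geq 2$; after freezing the coefficients and rescaling, the leading behaviour is governed by the heat equation, whose solutions emanating from an $n$-fold spatial zero resolve into at most $n$ nearby simple zeros for $t<t_{0}$ that coalesce as $t\uparrow t_{0}$, leaving strictly fewer zeros for $t>t_{0}$ (reflecting the Hermite-polynomial structure of the heat kernel). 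Summing the local contributions over the finitely many multiple zeros present at time $t_{0}$ gives $z(v(t_{0}^{-}))>z(v(t_{0}^{+}))$, so that $t\mapsto z(v(t))$ is nonincreasing, which is precisely (ii).

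Finally, (iii) follows from the fact that $t\mapsto z(v(t))$ is integer-valued, nonincreasing, and, by (i), finite at every positive time. On any interval $[t_{1},\infty)$ with $t_{1}>0$ it is bounded above by $z(v(t_{1}))<\infty$ and can strictly decrease only at the exceptional multiple-zero times, so at most $z(v(t_{1}))$ such drops occur there; in particular the drops are finite in number away from $t=0$. Beyond the last drop the value of $z$ is constant, which by the dictionary between drops and multiple zeros forces $v(t,\cdot)$ to have only simple zeros for all large $t$, and this provides the threshold $T_{0}$.
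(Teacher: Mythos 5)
Your proposal is correct and takes the same route as the paper's own treatment: the paper gives no proof of this lemma, deferring entirely to \cite{88Ang,82Mat,98Chen}, and your sketch is precisely a reconstruction of that cited argument --- finite-order vanishing via the strong unique continuation theorem of \cite{98Chen}, the caloric-polynomial (Hermite) blow-up at a degenerate zero yielding the dropping lemma as in \cite{88Ang}, and the integer-valued monotonicity bookkeeping for part (iii). Two cosmetic points only: Chen's local structure theorem gives \emph{exactly} $n$ simple zeros just before a degenerate time (not merely at most $n$), which is what makes the strict drop clean, and the finiteness of drops holds on $[t_{1},\infty)$ for each $t_{1}>0$ (globally on $(0,\infty)$ only if $z(v_{0})$ is finite) --- a restriction you essentially acknowledge and which matches how the lemma is used in the paper.
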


Let $\mathcal{L}(X^{\alpha})$ be the bounded linear operators from $X^{\alpha}$ into $X^{\alpha}$, $\sigma(l)$ be the spectrum of operator $l\in \mathcal{L}(X^{\alpha})$.  For $\varphi\in X^{\alpha}$, if $P(\varphi)=\varphi$, we say that $\varphi$ is a fixed point of Poincar\'{e} map $P$, the Morse index $\operatorname{ind}(\varphi)$ of fixed point $\varphi$ is denoted as the number of elements of $\sigma(DP(\varphi))$ whose moduli are greater than one, in which $DP(\varphi)$ is the Fr\'{e}chet derivative of $P$ at $\varphi$. If $\sigma(DP(\varphi))$ dose not intersect the unit circle, we say that $\varphi$ is a hyperbolic fixed point of $P$. Hereafter, we always assume $\operatorname{ind}(\varphi)>0$.

Suppose $\varphi$ is a fixed point of $P$, $u(t,x;\varphi)$ is the solution of equation \eqref{r-d-eq}, then the linearized equation about $u(t,x;\varphi)$ is
\begin{subequations}\label{fixed-point-eq}
\begin{align}
&v_{t}=v_{xx}+\hat{c}(t,x)v_{x}+\hat{d}(t,x)v,\\
&v(0)=v_{0},
\end{align}
\end{subequations}
where $$\hat{c}(t,x)=\partial f_{3}(t,u(t,x;\varphi),u_{x}(t,x;\varphi)),$$
$$\hat{d}(t,x)=\partial f_{2}(t,u(t,x;\varphi),u_{x}(t,x;\varphi)),$$
and $\hat{S}:X^{\alpha}\rightarrow X^{\alpha}$ is the evolution operator, then $v(t,\cdot;v_{0})=\hat{S}(t,0)v_{0}$ is a solution of \eqref{fixed-point-eq} with $v(0,\cdot;v_{0})=v_0$;  moreover, $\hat{S}((n+1)T,nT)=\hat{S}(T,0), n\geq1,$ for simplicity, let $\hat{S}\triangleq \hat{S}(T,0)$.

Then, one has the following characterization about eigenvalues and eigenspaces of $\hat{S}$ from \cite[Theorems 2.1, 2.2]{88AngFie}.
\begin{lemma}\label{eigen-value}
Let  $\{\lambda_{0}, \lambda_{j}, \tilde{\lambda}_{j}\}_{j\geq1}$ be eigenvalues of $\hat{S}$, then\\
(i) $$
|\lambda_{0}|>|\lambda_{1}|\geq\tilde{|\lambda_{1}|}>\cdots>|\lambda_{m}|\geq|\tilde{\lambda}_{m}|>|\lambda_{m+1}|\geq|\tilde{\lambda}_{m+1}|>\cdots
$$
(ii) let $E_{0}$ be the eigenspace of $\lambda_{0}$, and $E_{j}$ be the eigenspace of $\{\lambda_{j},\tilde{\lambda}_{j}\}, j\geq1$, then, for each $ \phi\in E_{j}\setminus\{0\},  j\geq0, \phi$  has only  simple zeros, and $z(\phi)=2j.$
\end{lemma}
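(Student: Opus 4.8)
The plan is to read the eigenstructure of the period map $\hat{S}=\hat{S}(T,0)$ off three ingredients: the compactness of $\hat{S}$, a strong positivity coming from the parabolic maximum principle, and the monotonicity of the zero number along the linearized flow \eqref{fixed-point-eq} (Lemma \ref{zero-number-lem}). First I would record that $\hat{S}\in\mathcal{L}(X^{\alpha})$ is compact, which is already built into the construction in Section \ref{global-attractor}: the analytic semigroup generated by $A$ is smoothing, so $\hat{S}$ factors through a bounded subset of $X^{1}$. Consequently $\sigma(\hat{S})$ is a discrete set of eigenvalues of finite algebraic multiplicity accumulating only at $0$, and the real number of labels in (i) is well posed once the multiplicities and the nodal labelling are pinned down. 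The two things to establish are therefore (ii), that each Floquet subspace carries a \emph{constant even} zero number with only simple zeros, and (i), that these labels occur in the exact multiplicity pattern $1,2,2,\dots$ with strict modulus gaps between distinct labels.

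For the top of the spectrum I would invoke positivity. Equation \eqref{fixed-point-eq} satisfies the strong maximum principle: if $v_{0}\ge 0$ and $v_{0}\not\equiv 0$ then $v(t,x)>0$ for all $t>0$, so $\hat{S}$ maps the positive cone into its interior and is a strongly positive compact operator. The Krein--Rutman theorem then produces a simple eigenvalue $\lambda_{0}>0$ of maximal modulus whose eigenfunction is strictly positive; hence $z\equiv 0$ on $E_{0}$ and $|\lambda_{0}|>|\mu|$ for every other eigenvalue $\mu$. This yields the first strict inequality in (i) and the $j=0$ case of (ii).

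Next I would show that the zero number is constant and even on each real Floquet subspace. For a real eigenvalue $\mu$ with eigenfunction $\phi$ one has $v(nT,\cdot)=\mu^{n}\phi$, so $z(v(t))$ is constant on $[0,T]$; since by Lemma \ref{zero-number-lem}(iii) a strict drop of $z$ accompanies every multiple zero, $\phi$ must have only simple zeros. As $z$ is locally constant on the $C^{1}$ functions with simple zeros and the unit sphere of a Floquet subspace is connected, $z$ is constant on the whole subspace, and a simple-zero function on $S^{1}$ has an even number of zeros, so this constant equals $2j$ for some $j\ge 0$; the same argument applies to the two-dimensional real invariant subspace attached to a complex conjugate pair. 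To order the labels, I would run the solution from $\phi_{1}+\phi_{2}$ for real Floquet data $(\mu_{1},\phi_{1}),(\mu_{2},\phi_{2})$ with $|\mu_{1}|>|\mu_{2}|$: the forward iterates $\mu_{1}^{n}\phi_{1}+\mu_{2}^{n}\phi_{2}$ converge in $C^{1}$ to the $\phi_{1}$-direction (so $z=z(\phi_{1})$ for $n\gg 0$), while the backward iterates on the $\hat{S}$-invariant plane $\operatorname{span}(\phi_{1},\phi_{2})$ converge to the $\phi_{2}$-direction (so $z=z(\phi_{2})$ for $-n\gg 0$); monotonicity of $z$ forces $z(\phi_{1})\le z(\phi_{2})$. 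Thus the nodal label is nondecreasing as the modulus decreases.

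The genuinely hard part, which I expect to be the main obstacle, is the exact multiplicity count together with the \emph{strict} modulus gaps $|\tilde{\lambda}_{j}|>|\lambda_{j+1}|$ between distinct nodal classes: one must show that label $0$ is realized with total algebraic multiplicity exactly $1$ and each label $2j$ $(j\ge 1)$ with multiplicity exactly $2$, and that no two eigenvalues carrying different labels can share the same modulus. The lower bound and the grouping into pairs come from a Sturm/winding analysis of the family $\{\operatorname{Re}(e^{is}\psi)\}_{s\in[0,2\pi)}$ spanning the real plane of a complex pair, whose zeros wind exactly $2j$ times around $S^{1}$; the delicate points are the upper bound and the strictness, where the weak monotonicity above is not enough to exclude accidental equalities of moduli across nodal classes. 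I would resolve these by a refined drop argument, applying a Krein--Rutman-type strict positivity not only to $\hat{S}$ but to the maps it induces on the successive quotients of the zero-number filtration (an exponential-separation statement), which forces a strict dominance, hence a strict modulus gap, at each block; comparison with the heat semigroup $e^{-AT}$, whose spectrum $\{1\}\cup\{e^{-j^{2}T}\}_{j\ge1}$ exhibits precisely the pattern $1,2,2,\dots$, would then fix the model dimensions and confirm the labelling. Verifying that these block separations never degenerate is the step requiring the most care.
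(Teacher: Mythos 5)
The first thing to say is that the paper does not prove this lemma at all: it is quoted verbatim from \cite[Theorems 2.1, 2.2]{88AngFie}, so the fair benchmark for your proposal is Angenent--Fiedler's original argument. Measured against that, your preparatory steps are sound and essentially the standard ones: compactness of $\hat{S}$, Krein--Rutman via the strong maximum principle for the simple dominant eigenvalue $\lambda_{0}$ with positive eigenfunction, the observation that an eigenfunction must have only simple zeros because a multiple zero would force a strict drop of $z$ along the (periodic-in-direction) trajectory, evenness of the zero count on $S^{1}$, and the weak ordering $z(\phi_{1})\le z(\phi_{2})$ for $|\mu_{1}|>|\mu_{2}|$ by comparing forward and backward asymptotics of $\mu_{1}^{n}\phi_{1}+\mu_{2}^{n}\phi_{2}$.

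However, the part you yourself flag as the main obstacle --- exact multiplicities $1,2,2,\dots$ and the strict gaps $|\tilde{\lambda}_{j}|>|\lambda_{j+1}|$ --- is left genuinely open, and the two mechanisms you propose for it do not work as stated. A ``Krein--Rutman-type strict positivity on the successive quotients of the zero-number filtration'' is not available on the circle: each nodal class $2j$, $j\ge 1$, carries a two-dimensional rotating block rather than a one-dimensional dominant direction, so no invariant cone with nonempty interior exists on those quotients; and the exponential separation between nodal blocks for time-periodic parabolic equations on $S^{1}$ is a theorem of the same depth as the lemma itself, so invoking it here is circular. Likewise, ``comparison with $e^{-AT}$'' must be an actual homotopy with control of the spectrum along the path, which is precisely what \cite{88AngFie} carry out. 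The ingredients that actually close the argument, and that are missing from your sketch, are: (a) the elementary but decisive observation that the evaluation map $\phi\mapsto(\phi(x_{0}),\phi_{x}(x_{0}))$ is injective on any subspace all of whose nonzero elements have only simple zeros, which caps every generalized eigenspace with constant nodal label at dimension $2$; (b) a recurrence argument excluding equal moduli across distinct nodal classes --- if $|\mu_{1}|=|\mu_{2}|$ with labels $2i\neq 2j$, the normalized iterates in the associated finite-dimensional invariant subspace are recurrent, forcing $z$ to be constant on a subspace of dimension at least $3$ and contradicting (a); and (c) the homotopy to $-\partial_{xx}$ to obtain the lower bound, i.e.\ that every label $2j$ is realized with multiplicity exactly two. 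Note also that your connectedness argument for constancy of $z$ on a block presupposes that \emph{every} nonzero element of the real (generalized) eigenspace has only simple zeros; for a complex pair with rational rotation number the iterates visit only finitely many directions, so this needs the continuous-time flow or the density argument to be made explicit, and Jordan blocks are not addressed at all.
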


\begin{remark}\label{hyp-eigenspace}
Assume that $\varphi$ is a hyperbolic fixed point of $P$, it then follows by \cite[Theorem 2.1]{92San} and \cite[Lemma 3.2, Theorem 5.1]{19Shen} that $\varphi$ is spatially-homogeneous and if the dimension of unstable manifold $\varphi$  is not equal zero then it must be odd, and
$$
|\lambda_{0}|>|\lambda_{1}|=|\tilde{\lambda}_{1}|>|\lambda_{2}|=|\tilde{\lambda}_{2}|>\ldots>|\lambda_{j}|=|\tilde{\lambda}_{j}|>\cdots
$$
\end{remark}

\subsection{Morse-Smale System}
In this subsection, we introduce some concepts related to the Morse-Smale system. Let  $\varphi$ be a hyperbolic fixed point of $P$, then the stable and unstable manifolds of $\varphi$ are defined as follows,
\begin{equation}
W^{s}(\varphi)=\{\psi\in X^{\alpha}|P^{n}(\psi)\rightarrow\varphi \quad\mbox{in} \quad X^{\alpha} \quad \mbox{as}\quad n\rightarrow\infty\}, \nonumber
\end{equation}
\begin{equation}
W^{u}(\varphi)=\{\psi\in X^{\alpha}|P^{-n}(\psi)\rightarrow\varphi \quad \mbox{in}\quad X^{\alpha} \quad\mbox{as}\quad n\rightarrow\infty\}. \nonumber
\end{equation}

Let $u(t,x;u_{0})$ be a solution of equation \eqref{r-d-eq}, then the linearized equation about $u(t,x;u_{0})$ is the following,
\begin{subequations}\label{tranver-line-eq}
\begin{align}
&v_{t}=v_{xx}+\bar{c}(t,x)v_{x}+\bar{d}(t,x)v,\\
&v(0)=v_{0},
\end{align}
\end{subequations}
where
$$\bar{c}(t,x)=\partial f_{3}(t,u(t,x;u_{0}),u_{x}(t,x;u_{0})),$$
$$\bar{d}(t,x)=\partial f_{2}(t,u(t,x;u_{0}),u_{x}(t,x;u_{0})).$$
Let  $S(t,0;u_{0}):X^{\alpha}\rightarrow X^{\alpha}$ be the solution operator of \eqref{tranver-line-eq}, then for each $v_0\in X^{\alpha}$, $v(t,\cdot;v_{0})=S(t,0;u_{0})v_{0}$ is the solution of \eqref{tranver-line-eq} with $v(0,\cdot;v_{0})=v_0$. In the following, we give an equivalent characterization about tangent spaces of stable and unstable manifolds for hyperbolic fixed points.
\begin{lemma}\label{def-Tu0}
Assume $\varphi$ is a hyperbolic fixed point of $P$,  then $W^{s}(\varphi)$ and $W^{u}(\varphi)$ are $C^1$ immersed submanifolds of $X^{\alpha}$; moreover,
\begin{itemize}
 \item[\rm{ (i)}] for any $u_{0}\in W^{s}(\varphi)$, the tangent space of $ W^{s}(\varphi)$ at $u_0$ is given by
\begin{equation}
T_{u_{0}}W^{s}(\varphi)=\{v_{0}\in X^{\alpha}|\limsup_{n\rightarrow\infty}\|S(nT,0;u_{0})v_{0}\|^{\frac{1}{n}}_{X^{\alpha}}<1\}; \nonumber
\end{equation}

\item[\rm{(ii)}]for any $u_{0}\in W^{u}(\varphi)$, the tangent space of $ W^{u}(\varphi)$ at $u_0$ is given by
\begin{equation}
\begin{aligned}
T_{u_{0}}W^{u}(\varphi)=\{v_{0}\in X^{\alpha}|&v_{0}\quad\mbox{has a backward continuation for \eqref{tranver-line-eq} and}\\ &\limsup_{n\rightarrow\infty}\|S(-nT,0;u_{0})v_{0}\|^{\frac{1}{n}}_{X^{\alpha}}<1\}. \nonumber
\end{aligned}
\end{equation}
\end{itemize}
\end{lemma}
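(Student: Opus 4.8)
The plan is to combine the local invariant manifold theorem for the hyperbolic fixed point $\varphi$ of the $C^{1}$ compact map $P$ with an exponential dichotomy analysis of the linearization \eqref{tranver-line-eq} along a trajectory lying in $W^{s}(\varphi)$ or $W^{u}(\varphi)$. First I would record the submanifold structure. Since $f\in C^{2}$, the Poincar\'{e} map $P$ is $C^{1}$ on $\Psi$ and, as established in Section \ref{global-attractor}, each $P^{n}$ is compact. At $\varphi$ the Fr\'{e}chet derivative is $DP(\varphi)=\hat{S}$, whose spectrum avoids the unit circle by hyperbolicity; compactness forces the portion of $\sigma(\hat{S})$ outside the unit disk to be finite, yielding a splitting $X^{\alpha}=E^{u}\oplus E^{s}$ with $\dim E^{u}=\operatorname{ind}(\varphi)<\infty$. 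The standard local invariant manifold theorem for $C^{1}$ maps near a hyperbolic fixed point (see \cite{81Hen}) then provides $C^{1}$ local manifolds $W^{s}_{\mathrm{loc}}(\varphi)$ and $W^{u}_{\mathrm{loc}}(\varphi)$ tangent at $\varphi$ to $E^{s}$ and $E^{u}$. Globalizing via $W^{s}(\varphi)=\bigcup_{n\ge 0}P^{-n}(W^{s}_{\mathrm{loc}}(\varphi))$ and $W^{u}(\varphi)=\bigcup_{n\ge 0}P^{n}(W^{u}_{\mathrm{loc}}(\varphi))$ exhibits them as $C^{1}$ immersed submanifolds, with $W^{u}(\varphi)$ of dimension $\operatorname{ind}(\varphi)$ and $W^{s}(\varphi)$ of codimension $\operatorname{ind}(\varphi)$.

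For item (i), fix $u_{0}\in W^{s}(\varphi)$ and write $\Sigma^{s}(u_{0})$ for the right-hand set. Differentiating the iterates gives $DP^{n}(u_{0})=S(nT,0;u_{0})$, and since $P^{n}(u_{0})\to\varphi$, this cocycle is asymptotic along the trajectory to the iterates of $\hat{S}$. I would first prove $T_{u_{0}}W^{s}(\varphi)\subseteq\Sigma^{s}(u_{0})$: any tangent vector $v_{0}$ is the velocity of a curve in $W^{s}(\varphi)$, so $S(nT,0;u_{0})v_{0}$ is tangent to $W^{s}(\varphi)$ at $P^{n}(u_{0})$, and once $P^{n}(u_{0})$ enters $W^{s}_{\mathrm{loc}}(\varphi)$ the local contraction on $E^{s}$ (spectral radius below one) forces $\limsup_{n}\|S(nT,0;u_{0})v_{0}\|_{X^{\alpha}}^{1/n}<1$. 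For the reverse inclusion I would invoke the persistence (roughness) of exponential dichotomies for asymptotically $T$-periodic linear parabolic equations on the circle: because the coefficients of \eqref{tranver-line-eq} converge to those of the linearization at $\varphi$ and $\hat{S}$ has a dichotomy with unstable dimension $\operatorname{ind}(\varphi)$, the cocycle $S(\cdot,0;u_{0})$ admits an exponential dichotomy on $[0,\infty)$ whose stable fiber is exactly $\Sigma^{s}(u_{0})$, of codimension $\operatorname{ind}(\varphi)$. Equality then follows by matching this codimension with that of $T_{u_{0}}W^{s}(\varphi)$.

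The argument for item (ii) is symmetric but must accommodate backward continuation. A vector $v_{0}$ tangent to the finite-dimensional $W^{u}(\varphi)$ is the velocity of a curve whose entire backward orbit lies in $W^{u}(\varphi)$ and converges to $\varphi$, hence $v_{0}$ admits a backward solution $S(-nT,0;u_{0})v_{0}$ of \eqref{tranver-line-eq} decaying at rate below one; conversely the backward dichotomy, obtained by roughness from $P^{-n}(u_{0})\to\varphi$, has unstable fiber of dimension $\operatorname{ind}(\varphi)$, matching $\dim T_{u_{0}}W^{u}(\varphi)$, and equality again closes by dimension count. I expect the main obstacle to be precisely this dichotomy/roughness step: one must verify that the linearization along a genuinely heteroclinic (non-periodic) trajectory inherits the spectral gap of $\hat{S}$ uniformly in $n$, and that the resulting invariant splitting is characterized exactly by the Lyapunov-type exponents in the statement. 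Here the zero-number control of eigenspace dimensions from Lemma \ref{eigen-value} and Remark \ref{hyp-eigenspace}, together with the dichotomy estimates collected in the Appendix from \cite{92Chen}, are the essential tools.
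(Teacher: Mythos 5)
Your overall strategy is close in spirit to the paper's: both rest on the local invariant manifold theory and the discrete dichotomy machinery imported from \cite{92Chen} (Lemmas \ref{unstable-manif}, \ref{stable-manif}, \ref{posi-isomorphism}, \ref{nega-isomorphism} in the Appendix), and your ``roughness of exponential dichotomies'' step is essentially Lemma \ref{posi-isomorphism}/\ref{nega-isomorphism} applied from a large time onward, so that part is acceptable in outline. However, there is a genuine gap: you never verify the injectivity of $P|_{\mathcal{A}}$ and of $DP(u_{0})=S(T,0;u_{0})$, which is precisely the one substantive fact the paper isolates as needing proof (Lemma \ref{injective-map}, established via the backward uniqueness result \cite[Proposition 7.1.1]{00Cho}). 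Without it, several steps you treat as routine do not go through. For part (ii), the very objects in the statement --- the backward orbit $P^{-n}(u_{0})$ and the backward continuation $S(-nT,0;u_{0})v_{0}$ --- are only well defined once backward orbits of $P$ and of the linearized cocycle are unique, i.e.\ once $P$ and $DP$ are injective; and your globalization $W^{u}(\varphi)=\bigcup_{n\geq 0}P^{n}(W^{u}_{\mathrm{loc}}(\varphi))$ is a $C^{1}$ \emph{immersed} submanifold only if $P^{n}$ restricted to the finite-dimensional local unstable manifold is an injective immersion, which again requires exactly the injectivity of $P$ and $DP$. These are not automatic for a compact map; they are a backward-uniqueness theorem for the parabolic equation.

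A second, related soft spot is your assertion that $W^{s}(\varphi)=\bigcup_{n\geq0}P^{-n}(W^{s}_{\mathrm{loc}}(\varphi))$ ``exhibits'' a $C^{1}$ immersed submanifold of codimension $\operatorname{ind}(\varphi)$. Since $DP^{n}(u_{0})$ is compact, hence not surjective, the preimage of a finite-codimension submanifold is a manifold of the same codimension only after one shows that the projection of the range of $DP^{n}(u_{0})$ onto the unstable directions $E^{u}$ at $P^{n}(u_{0})$ is onto; this submersion property is itself a consequence of the dichotomy analysis, not of general theory. This matters for your closing move in (i), which equates $T_{u_{0}}W^{s}(\varphi)$ and the Lyapunov-exponent set by matching codimensions: the codimension of $T_{u_{0}}W^{s}(\varphi)$ is part of what must be proved, so as written the count risks circularity. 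The paper avoids both issues by proceeding as in \cite[Lemma 4.2]{92Chen}: items (iv)--(v) of Lemmas \ref{unstable-manif} and \ref{stable-manif} characterize tangent vectors along orbits in the local manifolds directly by their growth rates (giving both inclusions without any dimension count), and the only remaining verification is Lemma \ref{injective-map}. To repair your proposal, add the backward-uniqueness argument for $P$ and $DP$, and either prove the submersion property for the $W^{s}$ globalization or replace the codimension-matching step by the transport characterizations (iv)--(v) of the Appendix lemmas.
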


Note that the Fr\'{e}chet derivative
$DP(u_{0})=S(T,0;u_{0})$, the proof of Lemma \ref{def-Tu0} is similar to that of \cite[Lemma 4.2]{92Chen}. To prove Lemma \ref{def-Tu0}, by using Lemma \ref{unstable-manif} and Lemma \ref{stable-manif}, we only need to check the following.
\begin{lemma}\label{injective-map}
The map $P|_{\mathcal{A}}:\mathcal{A}\rightarrow\mathcal{A}$ and $DP(u_{0}):X^{\alpha}\rightarrow X^{\alpha} (u_{0}\in \mathcal{A})$ are injective.
\end{lemma}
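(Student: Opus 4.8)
The goal is to prove that the Poincaré map restricted to the attractor, $P|_{\mathcal{A}}$, and the Fréchet derivative $DP(u_0)=S(T,0;u_0)$ for $u_0\in\mathcal{A}$, are both injective. The plan is to leverage the backward-uniqueness properties of scalar parabolic equations, which follow from the zero number machinery already assembled in Lemma \ref{zero-number-lem}. Injectivity of these two maps is precisely a backward-uniqueness statement: if two solutions (or two linearized solutions) agree at time $T$, they must agree at time $0$.

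First I would treat the linearized operator $DP(u_0)=S(T,0;u_0)$. Suppose $v_0\in X^\alpha$ satisfies $S(T,0;u_0)v_0=0$ while $v_0\neq 0$. Then $v(t,\cdot;v_0)$ is a nontrivial classical solution of the linear equation \eqref{tranver-line-eq} on $[0,T]$ with $v(T,\cdot)\equiv 0$. The coefficients $\bar c,\bar d$ are bounded and continuous because $u(t,x;u_0)$ stays in the attractor and $f\in C^2$. By Lemma \ref{zero-number-lem}(i), $z(v(t))<\infty$ for every $t>0$; by part (ii) the zero number is nonincreasing in $t$; and by part (iii) it can only drop finitely many times. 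A vanishing terminal condition $v(T,\cdot)=0$ forces the zero number to become infinite as $t\uparrow T$, contradicting finiteness and monotonicity for $t$ slightly less than $T$. Hence no such nonzero $v_0$ exists and $DP(u_0)$ is injective. The argument is a direct application of the drop/finiteness dichotomy: if a nontrivial solution vanishes identically at a positive time, the zero number must blow up, which Lemma \ref{zero-number-lem} forbids.

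Next I would handle $P|_{\mathcal{A}}$. Take $u_0,\tilde u_0\in\mathcal{A}$ with $P(u_0)=P(\tilde u_0)$, i.e.\ $u(T,\cdot;u_0)=u(T,\cdot;\tilde u_0)$, and set $w(t,x)=u(t,x;u_0)-u(t,x;\tilde u_0)$. Subtracting the two copies of \eqref{r-d-eq} and using the $C^2$ regularity of $f$ to write the difference of nonlinearities in integrated (mean-value) form, $w$ solves a linear parabolic equation
\begin{equation*}
w_t=w_{xx}+c(t,x)w_x+d(t,x)w,
\end{equation*}
where $c,d$ are bounded continuous coefficients built from integrals of $\partial f_3,\partial f_2$ along the segment joining the two solutions. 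Since $w(T,\cdot)\equiv 0$, the same zero-number contradiction as above applies unless $w\equiv 0$ on $[0,T]$; thus $u_0=\tilde u_0$ and $P|_{\mathcal{A}}$ is injective. Both proofs therefore reduce to the single backward-uniqueness lemma delivered by the zero-number function.

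The main obstacle is the careful justification that $w$ (respectively $v$) is genuinely a \emph{nontrivial} classical solution to which Lemma \ref{zero-number-lem} applies, and that the manufactured coefficients are bounded and continuous on $S^1\times[0,T]$. For $P|_{\mathcal{A}}$ this requires that both orbits remain in the compact attractor, so that $u,u_x,\tilde u,\tilde u_x$ are uniformly bounded and the mean-value coefficients inherit boundedness and continuity from $f\in C^2$; one must also confirm the regularity needed to invoke the lemma at the terminal time rather than an interior time. The delicate point is phrasing the zero-number blow-up cleanly: a nonzero solution cannot vanish identically at time $T$ because, by Lemma \ref{zero-number-lem}(iii), for $t$ near $T$ the solution has only simple zeros with a finite, locally constant zero count, which is incompatible with collapsing to the zero function. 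Once this is stated correctly, both injectivity claims follow immediately and uniformly.
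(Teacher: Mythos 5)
Your reduction is exactly the paper's: given $P(u_0)=P(u_1)$ (resp.\ $DP(u_0)w_0=0$) you form the difference of solutions (resp.\ the linearized solution), write it as a solution of a linear parabolic equation whose coefficients are bounded and continuous by the mean-value form of the $C^2$ nonlinearity along orbits in the compact attractor, and observe that injectivity is precisely a backward-uniqueness statement for that equation, with terminal condition vanishing at $t=T$. Up to this point you and the paper coincide.

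The final step, however, contains a genuine gap: backward uniqueness does \emph{not} follow from Lemma \ref{zero-number-lem} in the way you claim. The zero number $z(v(t))$ is finite and \emph{nonincreasing} in $t$, so it cannot ``become infinite as $t\uparrow T$''; and a finite, locally constant count of simple zeros on $[0,T)$ is perfectly compatible with $v(t,\cdot)$ collapsing uniformly to $0$ at $t=T$. Consider $v(t,x)=(T-t)\sin x$: it has exactly two simple zeros for every $t<T$, satisfies every conclusion of Lemma \ref{zero-number-lem} on $[0,T)$, and vanishes identically at $t=T$; it solves $v_t=v_{xx}+d(t)v$ only with $d(t)=1-\frac{1}{T-t}$ unbounded, and the fact that no such behavior occurs for \emph{bounded} coefficients is backward uniqueness itself --- the very statement you are trying to prove, so the argument as phrased is circular. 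The only way to extract a contradiction from Lemma \ref{zero-number-lem} is to apply conclusion (i) \emph{at} $t=T$, reading ``nontrivial'' as ``not identically zero in space-time'' so that $z(v(T))<\infty$ is asserted; but under that reading the lemma already has backward uniqueness built into its statement, and one would still owe a verification that the cited sources prove it in that strength. The paper avoids this entirely: it checks the structural estimates $\|G(t,u)\|_{X^{0}}\leq L\|v\|_{X^{1/2}}$ and $\|D_uF(t,u(t,\cdot;u_0))w\|_{X^{0}}\leq M\|w\|_{X^{1/2}}$ and then invokes the abstract backward-uniqueness theorem \cite[Proposition 7.1.1]{00Cho} (a log-convexity/Agmon-type result) to conclude $v\equiv 0$ on $[0,T]$ directly. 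To repair your proof you must supply such an analytic input --- the log-convexity argument, or a unique continuation theorem in a form covering terminal vanishing as in \cite{98Chen} --- rather than the zero-number function, which plays no role in the paper's proof of Lemma \ref{injective-map}.
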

\begin{proof}
Let $u(t,x;u_{0}), u(t,x;u_{1})$ be two solutions of equation \eqref{abstract-Cauchy-eq} with $u_{0}, u_{1}\in \mathcal{A}$. Suppose $P(u_{0})=P(u_{1})$, that is, $u(T,x;u_{0})=u(T,x;u_{1})$. Let $v(t)=u(t,x;u_{0})-u(t,x;u_{1})$, then $v(t)$ satisfies the following equation
\begin{equation}
\left\{
\begin{aligned}
&v_{t}+Av=G(t,u)(x),\quad t>0, x\in S^{1},\\
&v(0)=u_{0}-u_{1}, \nonumber
\end{aligned}
\right.
\end{equation}
where $G(t,u)(x)=F(t,u(t,x;u_{0}))-F(t,u(t,x;u_{1}))=c_{1}(t,x)v_{x}+d_{1}(t,x)v,$ $$c_{1}(t,x)=\int_{0}^{1}\partial f_{3}(t,su(t,x;u_{0})+(1-s)u(t,x;u_{1}),su_{x}(t,x;u_{0})+(1-s)u_{x}(t,x;u_{1}))ds,
$$
$$
d_{1}(t,x)=\int_{0}^{1}\partial f_{2}(t,su(t,x;u_{0})+(1-s)u(t,x;u_{1}),su_{x}(t,x;u_{0})+(1-s)u_{x}(t,x;u_{1}))ds
$$
and $v(T)=0$.

Note that $A: X^{1}\rightarrow X^{0}$ is a sectorial operator, $X^{\frac{1}{2}}=D(A^{\frac{1}{2}})=[X^{1}, X^{0}]_{\frac{1}{2}}$. Furthermore, $G(t,u)(x)$ is locally H\"{o}lder continuous in $t$ and locally Lipschitz continuous in $u$,  then by the standard semigroup theory (see \cite[Definition 3.3.1, Theorem 3.3.4, Corollary 3.3.5 and Theorem 3.5.2]{81Hen}) one has
$$v\in C([0,\infty),X^{\alpha})\cap C^{1}((0,\infty),X^{0})\cap C((0,\infty),X^{1}),$$
and
\begin{equation}
\begin{aligned}
\|G(t,u)\|_{X^{0}}&=(\int_{S^{1}}|c_{1}(t,x)v_{x}+d_{1}(t,x)v|^{2}dx)^{\frac{1}{2}},\\
&\leq L\parallel v\parallel_{X^{\frac{1}{2}}},\quad t\in[0,\infty), \nonumber
\end{aligned}
\end{equation}
where $L$ is a constant.

It then follows from \cite[Proposition 7.1.1]{00Cho} that $v(t)=0, t\in [0,T]$, a contradiction, $P|_{\mathcal{A}}$ is injective.

We turn to prove that $DP(u_{0})$ is also injective. Recall that $DP(u_{0})=S(T,0;u_{0})$, to prove $DP(u_{0})$ is injective it is sufficient to prove that if $S(T,0;u_{0})w_0=0$, then $w_0=0$. For this purpose, for each $w_{0}\in \mathcal{A}$, let $w(t,x;w_{0})=S(t,0;u_{0})w_0$,
then it satisfies the following equation
\begin{equation}
\left\{
\begin{aligned}
&w_{t}+Aw=D_{u}F(t,u(t,x;u_{0}))w,\quad t>0,x\in S^{1},\\
&w(0)=w_{0}, \nonumber
\end{aligned}
\right.
\end{equation}
this is equivalent to
\begin{equation}\label{w0-direction-eq}
\left\{
\begin{aligned}
&w_{t}=w_{xx}+c_{2}(t,x)w_{x}+d_{2}(t,x)w,\quad t>0, x\in S^{1},\\
&w(0)=w_{0}, \nonumber
\end{aligned}
\right.
\end{equation}
where $$c_{2}(t,x)=\partial f_{3}(t,u(t,x;u_{0}),u_{x}(t,x;u_{0})),$$
$$d_{2}(t,x)=\partial  f_{2}(t,u(t,x;u_{0}),u_{x}(t,x;u_{0})).$$

Suppose that $DP(u_{0})w_{0}=0,$ then  $w(T,x;w_{0})=0$, and hence
\begin{equation}
\begin{aligned}
&\|D_{u}F(t,u(t,\cdot;u_{0}))w\|_{X^{0}}=\|\partial f_{2}(t,u,u_{x})w+\partial f_{3}(t,u,u_{x})w_{x}\|_{X^{0}}\\
&\leq C_{1}\|w(t)\|_{X^{0}}+C_{2}\|w(t)\|_{X^{\frac{1}{2}}}\\
&\leq M\|w(t)\|_{X^{\frac{1}{2}}}, \nonumber
\end{aligned}
\end{equation}
in which $C_{1},C_{2}$ depend on
$$\sup_{(t,x)\in [0,T]\times S^{1}}|\partial f_{2}(t,u,u_{x})|,$$
$$\sup_{(t,x)\in [0,T]\times S^{1}}|\partial f_{3}(t,u,u_{x})|,$$
respectively.

Again by \cite[Proposition7.1.1]{00Cho}, we have
$$w(t)=0,\quad t\in[0,T],$$
particularly, $ w_{0}=0$, $DP(u_{0})$ is injective.
\end{proof}

Assume $\varphi_{+}, \varphi_{-}$ be two hyperbolic fixed points of $P$. If the tangent space of a point $u_0$ in the intersection of stable manifold of $\varphi_{+}$ and unstable manifold of $\varphi_{-}$ can generate full space, we say that the two manifolds intersect transversely at $u_0$, that is
\[
T_{u_0}W^s(\varphi_{+})+T_{u_0}W^u(\varphi_{-})=X^{\alpha}.
\]

A point $\psi\in X^{\alpha}$ is said to be a {\it non-wandering point} of $P$ if $\psi\in\bigcap_{n=0}^{\infty}\Psi_{n}$ and for any neighborhood $U$ of $\psi$ and any nonnegative integer $N$ there exist a $\phi\in U$ and $n\geq N$ such that $P^{n}(\phi)\in U$.

The discrete dynamical system $P$ is called {\it Morse-Smale} if it possesses a global attractor $\mathcal{A}$ with the following\\
(i) $P|_{\mathcal{A}}:\mathcal{A}\rightarrow\mathcal{A}$ and $DP(\varphi):X^{\alpha}\rightarrow X^{\alpha}(\varphi\in \mathcal{A})$ are injective;\\
(ii) the non-wandering set $\mathcal{N}$ of $P|_{\mathcal{A}}$ consists of a finite number of hyperbolic periodic points of $P$;\\
(iii) for any  pair of periodic points $\varphi_{+}$ and $\varphi_{-}$, the local stable manifold of $\varphi_{+}$ and the global unstable manifold of $\varphi_{-}$ intersect transversely in $X^{\alpha}$(see \cite{84Hale}).

\begin{remark}
 From  \cite[Theorem 2.1]{92San} and \cite[Theorem 5.1]{19Shen} we know that  $\varphi$ is a hyperbolic fixed point of Poincar\'{e} map $P$ of \eqref{r-d-eq}, if and only if it is a hyperbolic periodic point of $P$.
\end{remark}

\section{Linear asymptotic periodic systems}
In this section, we investigate some properties of linear asymptotic periodic equation of \eqref{r-d-eq}.

Consider the following two linear equations
\begin{subequations}\label{asy-equa}
\begin{align}
&v_{t}=v_{xx}+c(t,x)v_{x}+d(t,x)v,\quad (t,x)\in \mathbb{R}^{+}\times S^{1},\\
&v(0)=v_{0},
\end{align}
\end{subequations}
and
\begin{subequations}\label{lin-periodic-eq}
\begin{align}
&v^{p}_{t}=v^{p}_{xx}+\tilde{c}(t,x)v^{p}_{x}+\tilde{d} (t,x)v^{p},\quad (t,x)\in \mathbb{R}^{+}\times S^{1},\\
&v^{p}(0)=v^{p}_{0},
\end{align}
\end{subequations}
where
$$\tilde{c}(t+T,x)= \tilde{c} (t,x),\quad \forall t>0,\quad x\in S^{1},$$
$$\tilde{d}(t+T,x)=\tilde{d}(t,x),\quad \forall t>0,\quad x\in S^{1},$$
 for some $T>0$, $c(t,x), d(t,x), \tilde{c}(t,x),\tilde{d}(t,x)\in C(\mathbb{R}^{+}\times S^{1}),$ and all are locally H\"{o}lder continuous in $t$.

Assume that
$$
\lim_{n\rightarrow\pm\infty}\|c-\tilde{c}\|_{L^{\infty}([nT,(n+1)T]\times S^{1})}+\|d-\tilde{d}\|_{L^{\infty}([nT,(n+1)T]\times S^{1})}=0,
$$
 then \eqref{lin-periodic-eq} is called the asymptotic periodic equation of \eqref{asy-equa}.
Let $S(t,0):X^{\alpha}\rightarrow X^{\alpha}$ and $S_{p}(t,0):X^{\alpha}\rightarrow X^{\alpha}$ be solution operators of \eqref{asy-equa} and \eqref{lin-periodic-eq}, then $v(t,\cdot;v_{0})=S(t,0)v_{0}$, $v^{p}(t,\cdot;v^{p}_{0})=S_{p}(t,0)v^{p}_{0}$ are solutions of  \eqref{asy-equa} and \eqref{lin-periodic-eq} respectively. For simplicity, write $S_{p}=S_{p}(T,0)=S_{p}((n+1)T,nT).$

The following lemma tells us that $S_{p}$ and $S((n+1)T,nT)$ can be arbitrarily close as $n\to \pm\infty$.

\begin{lemma}\label{close-two-operators-lem}
$S_{p}$ has the following relationship with $S((n+1)T,nT)$ in $X^{\alpha}$,\\
(i)$\lim_{n\rightarrow\infty}\|S((n+1)T,nT)-S_{p}\|_{\mathcal{L}(X^{\alpha})}=0$,\\
(ii)$\lim_{n\rightarrow-\infty}\|S((n+1)T,nT)-S_{p}\|_{\mathcal{L}(X^{\alpha})}=0$.
\end{lemma}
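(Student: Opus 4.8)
The plan is to reduce both limits to a single weakly-singular Gronwall estimate, comparing the two evolution families through the common analytic semigroup $e^{-At}$ generated by $-A=\partial_{xx}$ on $X^{0}=L^{2}(S^{1})$. First I would remove the $n$-dependence of the time variable by a shift: for each $n$ set $c_{n}(s,x)=c(nT+s,x)$ and $d_{n}(s,x)=d(nT+s,x)$ for $s\in[0,T]$, so that $S((n+1)T,nT)$ is exactly the time-$T$ solution operator of \eqref{asy-equa} with coefficients $c_{n},d_{n}$, while by $T$-periodicity of $\tilde c,\tilde d$ the operator $S_{p}=S_{p}(T,0)$ is the time-$T$ solution operator of \eqref{lin-periodic-eq} with coefficients $\tilde c,\tilde d$. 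The asymptotic-periodicity hypothesis then reads $\varepsilon_{n}:=\|c_{n}-\tilde c\|_{L^{\infty}([0,T]\times S^{1})}+\|d_{n}-\tilde d\|_{L^{\infty}([0,T]\times S^{1})}\to0$ as $n\to\pm\infty$, and the cases $n\to+\infty$ and $n\to-\infty$ are then handled by one and the same argument.

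Next I would introduce the first-order perturbations $B_{n}(t)\phi=c_{n}(t,\cdot)\phi_{x}+d_{n}(t,\cdot)\phi$ and $\tilde B(t)\phi=\tilde c(t,\cdot)\phi_{x}+\tilde d(t,\cdot)\phi$. Since $\alpha>\tfrac34>\tfrac12$, one has $X^{\alpha}\hookrightarrow H^{1}(S^{1})$, so both are bounded from $X^{\alpha}$ into $X^{0}$ with $K:=\sup_{n,t}\|B_{n}(t)\|_{\mathcal L(X^{\alpha},X^{0})}<\infty$ (the $c_{n},d_{n}$ are uniformly bounded, being $L^{\infty}$-convergent), and, crucially, $\|(B_{n}(t)-\tilde B(t))\phi\|_{X^{0}}\le C\varepsilon_{n}\|\phi\|_{X^{\alpha}}$. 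Writing $v(t)=S(t,0)v_{0}$ and $v^{p}(t)=S_{p}(t,0)v_{0}$ (after the shift) for the same datum $v_{0}\in X^{\alpha}$ and subtracting their variation-of-constants formulae, the difference $w=v-v^{p}$ satisfies
\begin{equation}
w(t)=\int_{0}^{t}e^{-A(t-\tau)}\bigl(B_{n}-\tilde B\bigr)(\tau)v(\tau)\,d\tau+\int_{0}^{t}e^{-A(t-\tau)}\tilde B(\tau)w(\tau)\,d\tau.\nonumber
\end{equation}

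I would then estimate in the $X^{\alpha}$-norm using the smoothing bound $\|e^{-As}\|_{\mathcal L(X^{0},X^{\alpha})}\le Cs^{-\alpha}$, valid for $s\in(0,T]$ (the lower-order part of the $X^{\alpha}$-norm is absorbed since $s^{-\alpha}\ge T^{-\alpha}$). A preliminary application of the same smoothing to the Duhamel formula for $v$ alone, together with the weakly-singular Gronwall inequality \cite[Lemma 7.1.1]{81Hen}, gives a uniform a priori bound $\|v(\tau)\|_{X^{\alpha}}\le M\|v_{0}\|_{X^{\alpha}}$ for $\tau\in[0,T]$, with $M=M(K,T,\alpha)$ independent of $n$. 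Feeding this and the key bound above into the displayed identity yields
\begin{equation}
\|w(t)\|_{X^{\alpha}}\le CM\varepsilon_{n}\|v_{0}\|_{X^{\alpha}}\,\frac{T^{1-\alpha}}{1-\alpha}+CK\int_{0}^{t}(t-\tau)^{-\alpha}\|w(\tau)\|_{X^{\alpha}}\,d\tau,\nonumber
\end{equation}
and a second application of the singular Gronwall inequality gives $\|w(T)\|_{X^{\alpha}}\le C'\varepsilon_{n}\|v_{0}\|_{X^{\alpha}}$ with $C'$ independent of $n$ and $v_{0}$. Since $w(T)=\bigl(S((n+1)T,nT)-S_{p}\bigr)v_{0}$, taking the supremum over $\|v_{0}\|_{X^{\alpha}}\le1$ gives $\|S((n+1)T,nT)-S_{p}\|_{\mathcal L(X^{\alpha})}\le C'\varepsilon_{n}\to0$ as $n\to\pm\infty$, which proves both (i) and (ii) at once.

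The main obstacle is the weak singularity $(t-\tau)^{-\alpha}$ in the convolution kernel, which rules out a naive Gronwall argument and forces the generalized (singular) Gronwall inequality; the delicate point is to verify that the constants $K$, $M$, $C$, $C'$ depend only on $T$, $\alpha$, $A$ and the uniform bound on the coefficients, and \emph{not} on $n$, so that the final estimate is genuinely controlled by $\varepsilon_{n}$. A secondary technical check is that $B_{n}(t)-\tilde B(t)$ maps $X^{\alpha}$ into $X^{0}$ with norm $O(\varepsilon_{n})$, which rests on the embedding $X^{\alpha}\hookrightarrow H^{1}(S^{1})$ controlling the first-order term $(c_{n}-\tilde c)\phi_{x}$.
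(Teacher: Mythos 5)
Your proposal is correct and follows essentially the same route as the paper: shift time by $nT$, compare the two evolution operators through the variation-of-constants formula for $e^{-At}$, bound the coefficient difference in $\mathcal{L}(X^{\alpha},X^{0})$ via the embedding $X^{\alpha}\hookrightarrow C^{1}(S^{1})$, and apply the singular Gronwall inequality twice (once for a uniform a priori bound, once for the difference) before taking the supremum over the unit ball. The only cosmetic deviation is that you split the cross terms as $(B_{n}-\tilde B)v+\tilde B\,w$ with the a priori bound on $v$ (uniform in $n$), whereas the paper uses $(h^{n}-h^{p})v^{p}+h^{n}w$ with the bound on the $n$-independent periodic solution $v^{p}$ --- both are equally valid.
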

\begin{proof}
Consider the following abstract equation
\begin{equation}\label{asy-equa-semigroup}
\left\{
\begin{aligned}
&v^{n}_{s}+Av^{n}=h^{n}(s)v^{n},\quad 0< s\leq T,\quad x\in S^{1},\\
&v^{n}(0)=v^{n}_{0},
\end{aligned}
\right.
\end{equation}
and
\begin{equation}\label{lin-periodic-eq-semigroup}
\left\{
\begin{aligned}
&v^{p}_{s}+Av^{p}=h^{p}(s)v^{p},\quad 0< s\leq T,\quad x\in S^{1},\\
&v^{p}(0)=v^{p}_{0},
\end{aligned}
\right.
\end{equation}
where $Au=-u_{xx}$, $h^{n}(s)v^{n}=c(nT+s,x)v^{n}_{x}+d(nT+s,x)v^{n},$ $h^{p}(s)=\tilde{c}(s,x)v^{p}_{x}+\tilde{d}(s,x)v^{p}$. It is not hard to see that $h^{n}(s),h^{p}(s)\in\mathcal{L}(X^{\alpha},X^{0})$ are H\"{o}lder continuous in $s$. By standard theory for analytic semigroups(see \cite[Theorem 3.3.3]{81Hen}), equations \eqref{asy-equa-semigroup} and \eqref{lin-periodic-eq-semigroup} admit classical solutions in $X^{\alpha}$.

Let $v^{n}_{0}=v^{p}_{0}=\psi\in X^{\alpha}$, then $v^{n}(s,\cdot;\psi)=S(nT+s,nT)\psi$, $v^{p}(s,\cdot;\psi)=S_{p}(s,0)\psi$ respectively. Note that $L^{\infty}(S^{1})\hookrightarrow X^{0},$ and $X^{\alpha}\hookrightarrow C^1(S^1)$, there exists $C_{\alpha}>0$ such that
\begin{equation*}
\begin{aligned}
&\sup_{s\in(0,T]}\|h^{n}(s)-h^{p}(s)\|_{\mathcal{L}(X^{\alpha},X^{0})}\\
&=\sup_{s\in(0,T]}\frac{\|c(nT+s,\cdot)\psi_{x}+d(nT+s,\cdot)\psi-\tilde{c}(s,\cdot)\psi_{x}-\tilde{d}(s,\cdot)\psi\|_{X^{0}}}{\|\psi\|_{X^{\alpha}}}\\
&\leq\sup_{s\in(0,T]}\frac{\|c(nT+s,\cdot)-\tilde{c}(s,\cdot)\|_{L^{\infty}(S^1)}\|\psi_{x}\|_{X^{0}}+\|d(nT+s,\cdot)-\tilde{d}(s,\cdot)\|_{L^{\infty}(S^1)}\|\psi\|_{X^{0}}}{\|\psi\|_{X^{\alpha}}}\\
&\leq C_{\alpha}\sup_{s\in(0,T]}[\|c(nT+s,\cdot)-\tilde{c}(s,\cdot)\|_{L^{\infty}(S^1)}+\|d(nT+s,\cdot)-\tilde{d}(s,\cdot)\|_{L^{\infty}(S^1)}].\\
\end{aligned}
\end{equation*}
By asymptotic periodicity, one has
\begin{equation}\label{close-two-operators}
\sup_{s\in[0,T]}\|h^{n}(s)-h^{p}(s)\|_{\mathcal{L}(X^{\alpha},X^{0})}\to 0, \quad n\to \infty.
\end{equation}
By standard theory of analytic semigroups, we know that $$\|A^{\alpha}e^{-At}\|\leq M_{1}t^{-\alpha},$$
where $M_{1}$ is a positive constant depending on $A$ and $\alpha$. Let $M_p=\sup_{0\leq s\leq T}\|h^p(s)\|_{\mathcal{L}(X^{\alpha},X^{0})}$, then
$$
 ||v^{p}(s,\cdot;\psi)||_{X^{\alpha}}\leq M_{2}\|\psi\|_{X^{\alpha}}+\int_0^{s}\frac{M_{3}} {(s-\tau)^{\alpha} }\|v^p(\tau,\cdot;\psi)\|_{X^{\alpha}}d\tau,
$$
where $M_2$ depends on $A$, $M_3$ depends on $A, \alpha, M_p$. It then follows from the Gronwall's inequality (see Exercise 4* in \cite[Chapter 7]{81Hen}) that

\begin{equation}
 \parallel v^{p}(s,\cdot;\psi)\parallel_{X^{\alpha}}\leq M_{4}\parallel\psi\parallel_{X^{\alpha}},\quad \forall s\in [0,T], \nonumber
\end{equation}
where $M_{4}>0$ is a constant depending on $ T , A, \alpha, M_p$. By the variation of constants formula, we have
\begin{equation}
\begin{aligned}
v^{n}(s,x;\psi)-v^{p}(s,x;\psi)=\int_{0}^{s}&e^{-A(s-\xi)}\{(h^{n}(\xi)-h^{p}(\xi))v^{p}(\xi,x;\psi)\\
&+h^{n}(\xi)(v^{n}(\xi,x;\psi)-v^{p}(\xi,x;\psi))\}d\xi.\nonumber
\end{aligned}
\end{equation}
In view of  \eqref{close-two-operators}, for the given $0<\epsilon \ll 1$, we may assume that
\[
\begin{split}
 \sup_{s\in [0,T]}\|h^n(s)\|_{\mathcal{L}(X^{\alpha},X^{0})}\leq M_p+1, \quad n\gg 1,\\
 \sup_{s\in[0,T]}\|h^{n}(s)-h^{p}(s)\|_{\mathcal{L}(X^{\alpha},X^{0})}<\epsilon, \quad n\gg 1.
\end{split}
\]
Therefore,
\begin{equation}
\begin{aligned}
&\| v^{n}(s,\cdot;\psi)-v^{p}(s,\cdot;\psi)\|_{X^{\alpha}}\\
&\leq\int_{0}^{s}\frac{M_1}{(s-\xi)^{\alpha}}\{\parallel(h^{n}(\xi,\cdot)-h^{p}(\xi,\cdot))\parallel _{\mathcal{L}(X^{\alpha},X^{0})}\parallel v^{p}(\xi,\cdot;\psi)\parallel_{X^{\alpha}}\\
&+\parallel h^{n}(\xi,\cdot)\parallel_{\mathcal{L}(X^{\alpha},X^{0})}\parallel(v^{n}(\xi,\cdot;\psi)-v^{p}(\xi,\cdot;\psi))\parallel_{X^{\alpha}}\}d\xi\\
&\leq \frac{M_1M_4s^{1-\alpha}}{1-\alpha}\varepsilon \parallel\psi\parallel_{X^{\alpha}}+{ M_{1} }(M_{p}+1)\int_{0}^{s}(s-\xi)^{-\alpha}\parallel v^{n}(\xi,\cdot;\psi)-v^{p}(\xi,\cdot;\psi)\parallel_{X^{\alpha}}d\xi\nonumber, \quad n\gg 1.
\end{aligned}
\end{equation}
Again by the Gronwall's inequality,
$$
\parallel v^{n}(s,\cdot;\psi)-v^{p}(s,\cdot;\psi)\parallel_{X^{\alpha}}
\leq M_{5}\varepsilon\parallel\psi\parallel_{X^{\alpha}},\quad n\gg 1,
$$
where $M_{5}$ is a constant depending on $\alpha, T, M_{1}, M_{4}, M_{p}$.

Thus
\begin{equation}
\lim_{n\rightarrow\infty}\|S((n+1)T,nT)-S_{p}\|_{\mathcal{L}(X^{\alpha})}=\lim_{n\rightarrow\infty}\sup_{\|\psi\|_{X^{\alpha}}=1} \|v^{n}(s,\cdot;\psi)-v^{p}(s,\cdot;\psi)\|_{X^{\alpha}}
=0. \nonumber
\end{equation}
Similarly, replacing $n\rightarrow\infty$ with $n\rightarrow-\infty$, we can obtain
\begin{equation}
\lim_{n\rightarrow-\infty}\|S((n+1)T,nT)-S_{p}\|_{\mathcal{L}(X^{\alpha})}=0. \nonumber
\end{equation}
\end{proof}

From Lemma \ref{eigen-value}, we know that the eigenvalues of $S_{p}$ can be denoted as $\{\lambda_{0},\lambda_{j},\tilde{\lambda}_{j}\}_{j\geq1},$ where
$|\lambda_{0}|>|\lambda_{j}|\geq|\tilde{\lambda}_{j}|>|\lambda_{j+1}|\geq\tilde{|\lambda}_{j+1}|,j\geq1$,  and $\lim_{j\rightarrow\infty}|\lambda_{j}|=0$, $\lim_{j\rightarrow\infty}|\tilde{\lambda}_{j}|=0.$

Let $\Lambda$ be the set of all nonnegative numbers $r$ that satisfy
$$\sigma(S_{p})\cap\{z\in \mathbb{C}:|z|=r\}\neq\emptyset\,$$
that is $\Lambda=\{r_{0}, r_{j}, \tilde{r}_{j}, j\geq1\}$, where $r_{0}=|\lambda_{0}|,r_{j}=|\lambda_{j}|, \tilde{r}_{j}=|\tilde{\lambda}_{j}|, j\geq1.$

For any $\psi\in X^{\alpha},$ and for any integer $m\geq0$, define
$$\rho_{\infty}(m,\psi)=\limsup_{n\rightarrow\infty}\|v(nT,\cdot;mT,\psi)\|_{X^{\alpha}}^{\frac{1}{n}},$$
where $v(t,x;mT,\psi)$ is the solution of equation \eqref{asy-equa} with $v(mT,\cdot;mT,\psi)=\psi$.

For integer $k\geq0$, define
\begin{equation}\label{def-Fk+}
F_{k}^{+}(m)=\{\psi\in X^{\alpha}|\rho_{\infty}(m,\psi)\leq r_{k}\}.
\end{equation}
Then we have the following.

\begin{proposition}\label{pro-Fk+}
For each integer $m\geq0$, we have
\begin{itemize}
  \item[\rm{(i)}]$X^{\alpha}=F_{0}^{+}(m)\supset F_{1}^{+}(m)\supset\ldots,\quad \mbox{and}\quad \bigcap_{k\geq0} F_{k}^{+}(m)=\{0\}$;

 \item[\rm{(ii)}] if $\psi\in F_{k}^{+}(m)\backslash F_{k+1}^{+}(m)$, then
$$\lim_{n\rightarrow\infty}\|v(nT,\cdot;mT,\psi)\|_{X^{\alpha}}^{\frac{1}{n}}=r_{k},$$
or $$\lim_{n\rightarrow\infty}\|v(nT,\cdot;mT,\psi)\|_{X^{\alpha}}^{\frac{1}{n}}=\tilde{r}_{k},$$
and there exists a subsequence$\{n_{k}T\}\subset\{nT\}_{n\geq0},$ such that
$$\frac{v(n_{k}T,x;mT,\psi)}{\|v(n_{k}T,\cdot;mT,\psi)\|_{X^{\alpha}}}$$
converges into the eigenspace $E_{k}$  as $n_{k}\rightarrow\infty$ ($E_{k}$ be eigenspaces of $S_{p}$ as defined in Lemma \ref{eigen-value});

\item[\rm{(iii)}] $z(\psi)\geq2k, \forall\psi\in F_{k}^{+}(m)\setminus\{0\}.$
\end{itemize}
\end{proposition}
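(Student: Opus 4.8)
The plan is to reduce the asymptotically periodic problem to the exact period map $S_{p}$ by means of Lemma \ref{close-two-operators-lem}, and then to read off growth rates and zero numbers from the Floquet structure of $S_{p}$ recorded in Lemma \ref{eigen-value}. Throughout I abbreviate $v(nT,\cdot)=v(nT,\cdot;mT,\psi)$ and use that each $S((n+1)T,nT)$ is a compact (smoothing) operator on $X^{\alpha}$ with $\|S((n+1)T,nT)-S_{p}\|_{\mathcal{L}(X^{\alpha})}\to 0$. For the easy parts of (i): the identity $F_{0}^{+}(m)=X^{\alpha}$ follows from the uniform bound $\rho_{\infty}(m,\psi)\le r_{0}$, where $r_{0}$ is the spectral radius of $S_{p}$. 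Indeed, for any $\varepsilon>0$ one may renorm $X^{\alpha}$ so that $\|S_{p}\|\le r_{0}+\varepsilon$; since the factors converge to $S_{p}$ they eventually satisfy $\|S((n+1)T,nT)\|\le r_{0}+2\varepsilon$, so composing gives $\|v(nT,\cdot)\|_{X^{\alpha}}\le C(r_{0}+2\varepsilon)^{n}\|\psi\|_{X^{\alpha}}$, whence $\rho_{\infty}\le r_{0}+2\varepsilon$ and letting $\varepsilon\to0$ yields the claim. The nested inclusions $F_{k}^{+}(m)\supset F_{k+1}^{+}(m)$ are immediate from $r_{k}>r_{k+1}$ (Lemma \ref{eigen-value}(i)), while the assertion $\bigcap_{k}F_{k}^{+}(m)=\{0\}$ I postpone, as it is cleanest to deduce from the core claim below.

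The heart of the proof is the following statement, from which (ii), (iii) and the remaining part of (i) all follow. For every $\psi\neq0$, the solution $v(t,\cdot)$ is nonzero for all $t$ (backward uniqueness for the linear parabolic equation, exactly as in Lemma \ref{injective-map}); hence by Lemma \ref{zero-number-lem} its zero number is nonincreasing, drops only finitely often, and stabilizes at an even value $2\ell$ with only simple zeros for all large $n$. I claim moreover that $\lim_{n\to\infty}\|v(nT,\cdot)\|_{X^{\alpha}}^{1/n}\in\{r_{\ell},\tilde r_{\ell}\}$ and that the normalized iterates $w_{n}:=v(nT,\cdot)/\|v(nT,\cdot)\|_{X^{\alpha}}$ admit a subsequence converging in $X^{\alpha}$ into $E_{\ell}$. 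To establish this I would pass to the normalized recursion $w_{n+1}=S((n+1)T,nT)w_{n}/\|S((n+1)T,nT)w_{n}\|_{X^{\alpha}}$ and extract, by compactness, a limit $w_{n_{j}}\to w_{*}$ with $\|w_{*}\|_{X^{\alpha}}=1$. Since the iterating operators converge to $S_{p}$, a power-method analysis for this perturbed family (robust because of the spectral gaps in Lemma \ref{eigen-value}(i)) forces $w_{*}$ to lie in an eigenspace of $S_{p}$ and identifies the growth rate with the modulus of the corresponding eigenvalue. The simple-zero property makes $z$ locally constant under $X^{\alpha}$-convergence, so $z(w_{*})=2\ell$; by Lemma \ref{eigen-value}(ii) an eigenfunction with zero number $2\ell$ lies in $E_{\ell}$, whose eigenvalues are $\lambda_{\ell},\tilde\lambda_{\ell}$, giving $\rho_{\infty}\in\{r_{\ell},\tilde r_{\ell}\}$. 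Part (ii) then follows: if $\psi\in F_{k}^{+}(m)\setminus F_{k+1}^{+}(m)$ then $r_{k+1}<\rho_{\infty}\le r_{k}$, which together with $\rho_{\infty}\in\{r_{\ell},\tilde r_{\ell}\}\subset(r_{\ell+1},r_{\ell}]$ forces $\ell=k$, so the growth rate is $r_{k}$ or $\tilde r_{k}$ and the normalized subsequence converges into $E_{k}$.

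The remaining items are now short. For the intersection in (i): if $\psi\in\bigcap_{k}F_{k}^{+}(m)$ were nonzero, then $\rho_{\infty}\le r_{k}$ for all $k$ forces $\rho_{\infty}=0$ (as $r_{k}\to0$), contradicting $\rho_{\infty}\in\{r_{\ell},\tilde r_{\ell}\}>0$ from the core claim; hence $\psi=0$. For (iii): let $\psi\in F_{k}^{+}(m)\setminus\{0\}$, and let $2\ell$ be its eventual zero number. Since $z(v(t,\cdot))$ is nonincreasing, $z(\psi)=z(v(mT,\cdot))\ge 2\ell$. The core claim gives $\rho_{\infty}\in(r_{\ell+1},r_{\ell}]$, while $\psi\in F_{k}^{+}(m)$ gives $\rho_{\infty}\le r_{k}$; were $\ell<k$ we would have $r_{\ell+1}\ge r_{k}\ge\rho_{\infty}$, contradicting $\rho_{\infty}>r_{\ell+1}$. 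Thus $\ell\ge k$ and $z(\psi)\ge 2\ell\ge 2k$.

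The main obstacle is the core claim, specifically the step that identifies the subsequential limit $w_{*}$ of the normalized, asymptotically periodic iterates as living in a genuine eigenspace of $S_{p}$ and realizing the corresponding growth rate, in the absence of exact spectral projections for the non-autonomous family. This is where the operator convergence of Lemma \ref{close-two-operators-lem}, the compactness of the period maps, and the rigidity of the zero number (Lemmas \ref{zero-number-lem} and \ref{eigen-value}) must be combined: the zero number plays here the selecting role that spectral projections play in the exact-periodic setting of Chen et al.\ \cite{92Chen}. Upgrading $\limsup$ to $\lim$ also belongs to this step and is handled by the same dominant-mode analysis once the spectral gap has been exploited.
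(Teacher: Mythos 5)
Your overall skeleton matches the paper's: reduce the asymptotically periodic dynamics to the period map $S_p$ via Lemma \ref{close-two-operators-lem}, exploit the spectral gaps of Lemma \ref{eigen-value}, and finish (iii) with the zero number. Your derivations of the nesting, of $\bigcap_k F_k^+(m)=\{0\}$, and of (ii) and (iii) \emph{from your core claim} are sound, and your renorming argument for $F_0^+(m)=X^{\alpha}$ is a legitimate elementary alternative (the paper instead gets it at once from Lemma \ref{norm-spectrum}). But the core claim itself --- that $\lim_n\|v(nT,\cdot)\|_{X^{\alpha}}^{1/n}$ exists and equals the modulus of an eigenvalue of $S_p$, and that a normalized subsequence converges into a spectral subspace $E_{\ell}$ --- is exactly the decisive step, and you leave it as an acknowledged ``obstacle'' with only a gesture at a ``power-method analysis.'' This is a genuine gap, and it is unnecessary: writing $v((n+1)T)=S_p v(nT)+R_n v(nT)$ with $R_n=S((n+1)T,nT)-S_p$ and $\|R_n\|_{\mathcal{L}(X^{\alpha})}\to 0$ (Lemma \ref{close-two-operators-lem}) puts the sequence precisely in the hypotheses \eqref{Sp and Rn-sequen}--\eqref{Rn-norm-limit} of the appendix lemmas imported from \cite{92Chen}. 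Lemma \ref{norm-spectrum} then upgrades $\limsup$ to $\lim$ and identifies the rate with some $|\lambda|\in\Lambda$; choosing $\delta$ so that the annulus $r_k-\delta\le|z|\le r_k+\delta$ isolates the relevant modulus, Lemma \ref{projection-lem} gives $\mathcal{X}(r_k-\delta)=\infty$ and $\mathcal{X}(r_k+\delta)=0$, and Lemma \ref{conver-posi-seque} (with the finite-rank projection $P_*=P(r_k-\delta)-P(r_k+\delta)$) yields the subsequential convergence of $\hat v(n_kT)$ into $E_k$. That is the paper's proof; your proposal does not reconstruct this machinery and does not close the step by other means.

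A second, related flaw is the direction of your zero-number argument. You propose to let the stabilized zero number $2\ell$ of $v(nT,\cdot)$ \emph{select} the eigenspace, via $z(w_*)=2\ell$ for a subsequential limit $w_*$. But the zero number is only stable under $C^1$-convergence at limits with exclusively simple zeros, and you know $w_*$ has simple zeros only \emph{after} you know $w_*\in E_j$ for some $j$ (Lemma \ref{eigen-value}(ii)) --- which is what the spectral projections are needed to establish. As sketched, the selection argument is circular: compactness alone gives a unit-norm limit $w_*$, not membership in any $E_j$, and for a $w_*$ with a degenerate zero the equality $z(w_*)=2\ell$ can fail. In the paper the logic runs the other way: convergence into $E_k$ is obtained first from Lemmas \ref{projection-lem} and \ref{conver-posi-seque}, and only then is the zero number used --- $z(S((n_k+1)T,n_kT)\hat v(n_kT))=2k$ for $n_k\gg1$ because the limit lies in $E_k$ and has simple zeros, whence $z(\psi)\ge 2k$ by the monotonicity in Lemma \ref{zero-number-lem}(ii). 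Note also that your coupling of the eventual zero number $2\ell$ with the growth rate ($\rho_\infty\in\{r_\ell,\tilde r_\ell\}$ for the \emph{same} $\ell$) is a stronger statement than the proposition requires and is not proved anywhere in your sketch; the paper never needs it.
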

\begin{proof}
(i) It is not hard to see that $F_{0}^{+}(m)\supset F_{1}^{+}(m)\supset\cdots$. We first prove $X^{\alpha}=F_{0}^{+}(m)$.

By Lemma \ref{close-two-operators-lem}(i), we know that for any $\psi\in X^{\alpha}$
\begin{equation}
\begin{aligned}
v((n+1)T,x;mT,\psi)&=S((n+1)T,nT)v(nT,x;mT,\psi)\\
&=S_{p}v(nT,x;mT,\psi)+R_{n}v(nT,x;mT,\psi),\nonumber
\end{aligned}
\end{equation}
where $R_{n}=S((n+1)T,nT)-S_{p}\in \mathcal{L}(X^{\alpha})$ with $\lim_{n\rightarrow\infty}\|R_{n}\|_{X^{\alpha}}=0$. It then follows from Lemma \ref{norm-spectrum} that there exists an eigenvalue $\lambda$ of $S_{p}$ such that
$$\lim_{n\rightarrow\infty}\|v(nT,\cdot;mT,\psi)\|^{\frac{1}{n}}_{X^{\alpha}}=|\lambda|,$$
which proved $X^{\alpha}=F_{0}^{+}$.

Since  $\rho_{\infty}(m,\psi)=0$ is only established for $\psi=0$ and $\lim_{n\rightarrow\infty}r_{n}=0 (resp. \lim_{n\rightarrow\infty}\tilde{r}_{n}=0)$,  we have
$$\bigcap_{k=0}^{\infty}F_{k}^{+}(m)=\{0\}.$$

(ii)Note that
$$
F_{k}^{+}(m)\backslash F_{k+1}^{+}(m)=\{\psi\in X^{\alpha}|r_{k+1}<\rho_{\infty}(m,\psi)\leq r_{k}\},
$$
for any given $\psi\in F_{k}^{+}(m)\backslash F_{k+1}^{+}(m)$, one has
$$\lim_{n\rightarrow\infty}\|v(nT,\cdot;mT,\psi)\|_{X^{\alpha}}^{\frac{1}{n}}=r_{k},$$
or
$$\lim_{n\rightarrow\infty}\|v(nT,\cdot;mT,\psi)\|_{X^{\alpha}}^{\frac{1}{n}}=\tilde{r}_{k}.$$
Choose $0<\delta<r_{k}$ (resp. $0<\delta<\tilde{r}_{k}$) be such that $\sigma(S_{p})\bigcap\{{z\in \mathbb{C}: } r_{k}-\delta\leq|z|\leq r_{k}+\delta\}=\{r_{k}\}$ (resp. $\sigma(S_{p})\bigcap\{z\in \mathbb{C}: \tilde{r}_{k}-\delta\leq|z|\leq \tilde{r}_{k}+\delta\}=\{\tilde{r}_{k}\}$) and let
$$\mathcal{X}(\alpha)=\lim_{n\rightarrow\infty}\frac{\|P(\alpha)v(nT)\|}{\|Q(\alpha)v(nT)\|},\quad \alpha\in[0,\infty)\backslash\Lambda,$$
where $P(\alpha)$ and $Q(\alpha)$ are the projection operators about the decomposition of the spectrum $\sigma(S_{p})\bigcap\{z\in \mathbb{C}:|z|>\alpha\}$ and $\sigma(S_{p})\bigcap\{z\in \mathbb{C}:|z|<\alpha\}$ respectively. Then by Lemma \ref{projection-lem}
$$
\mathcal{X}(r_{k}-\delta)=\infty, \mathcal{X}(r_{k}+\delta)=0,
$$
$$
(\mbox{resp.}\quad\mathcal{X}(\tilde{r}_{k}-\delta)=\infty, \mathcal{X}(\tilde{r}_{k}+\delta)=0).
$$
Let $P_{*}=P(r_{k}-\delta)-P(r_{k}+\delta)$ (resp. $P_{*}=P(\tilde{r}_{k}-\delta)-P(\tilde{r}_{k}+\delta)$),  it is obvious that $\dim P_{*}<\infty$, and according to Lemma \ref{conver-posi-seque}, there exists a subsequence $\{n_{k}T\}\subset\{nT\}_{n\geq0}$ such that $\frac{v(n_{k}T,x;mT,\psi)}{\|v(n_{k}T,\cdot;mT,\psi)\|_{X^{\alpha}}}$ converges into the eigenspace $E_{k}$ as $n_{k}\rightarrow\infty$.

(iii) Let $\hat{v}(n_{k}T)=\frac{v(n_{k}T,x;mT,\psi)}{\|v(n_{k}T,\cdot;mT,\psi)\|_{X^{\alpha}}},$
by virtue of (ii), we assume that $\hat{v}(n_{k}T)\rightarrow \phi_{k}$, as $n_{k}\rightarrow\infty$, where $\phi_{k}\in P_{*}X^{\alpha}$, it then follows by Lemma \ref{close-two-operators-lem}(i) that
$$
S((n_{k}+1)T,n_{k}T)\hat{v}(n_{k}T)\rightarrow S_{p}\phi_{k}=\lambda_{k}\phi_{k} \quad \mbox{as}\quad n_{k}\rightarrow\infty,
$$
then by Lemma \ref{zero-number-lem} (iii)
$$z(S((n_{k}+1)T,n_{k}T)\hat{v}(n_{k}T))=z(\lambda_{k}\phi_{k})=2k \quad\mbox{as}\quad n_{k}\gg 1,$$
and according to Lemma \ref{zero-number-lem} (ii), we can obtain
$$
z(\psi)\geq z(\hat{v}((n_{k}+1)T))=2k,\quad \psi\in F_{k}^{+}(m).
$$
\end{proof}

For each non-positive integer $m$ let
$$F_{\infty}^{-}(m)=\{\psi\in X^{\alpha}| v(t,x;mT,\psi)\mbox{is the backward solution of} \eqref{asy-equa}\}.$$
And for each $\psi\in F_{\infty}^{-}(m)$, define
$$
\rho_{-\infty}(m,\psi)=\liminf_{n\rightarrow-\infty}\|v(nT,\cdot;mT,\psi)\|_{X^{\alpha}}^{\frac{1}{n}},
$$
for each integer $k\geq0$, define
\begin{equation}\label{def-Fk-}
F_{k}^{-}(m)=\{\psi\in F_{\infty}^{-}(m)|\rho_{-\infty}(m,\psi)>r_{k}\}.
\end{equation}

\begin{proposition}\label{pro-Fk-}
For each integer $m\leq0,$ we have
\begin{itemize}
  \item [\rm{(i)}]$\{0\}=F_{0}^{-}(m)\subset F_{1}^{-}(m)\subset\cdots;$
  \item [\rm{(ii)}]if $\psi\in F_{k}^{-}(m)\backslash \{0\}$, then there exists integer $0\leq l<k,$ such that
$$\lim_{n\rightarrow-\infty}\|v(nT,\cdot;mT,\psi)\|_{X^{\alpha}}^{\frac{1}{n}}=r_{l},$$
or
$$\lim_{n\rightarrow-\infty}\|v(nT,\cdot;mT,\psi)\|_{X^{\alpha}}^{\frac{1}{n}}=\tilde{r}_{l},$$
and there exists a subsequence $\{n_{k}T\}\subset\{nT\}_{n\leq0},$ such that
$$\frac{v(n_{k}T,x;mT,\psi)}{\|v(n_{k}T,\cdot;mT,\psi)\|_{X^{\alpha}}}$$
converges into the projection space $E_{l}$ as $n_{k}\rightarrow-\infty $ ($E_{l}$ be eigenspaces of $S_{p}$ as defined in Lemma \ref{eigen-value});
\item [\rm{(iii)}]$z(\psi)<2k, \forall\psi\in F_{k}^{-}(m).$
\end{itemize}
\end{proposition}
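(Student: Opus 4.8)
The plan is to transpose the proof of Proposition~\ref{pro-Fk+} to the backward orbit living on $F_\infty^-(m)$, replacing Lemma~\ref{close-two-operators-lem}(i) by its $n\to-\infty$ counterpart~(ii). Fix $m\le 0$ and $\psi\in F_\infty^-(m)\setminus\{0\}$, and write $v_n=v(nT,\cdot;mT,\psi)$, so that $v_{n+1}=S((n+1)T,nT)v_n=(S_p+R_n)v_n$ with $\|R_n\|_{\mathcal{L}(X^\alpha)}\to 0$ as $n\to-\infty$. The monotonicity $\{0\}=F_0^-(m)\subset F_1^-(m)\subset\cdots$ of part~(i) is immediate from $r_0>r_1\ge\tilde r_1>\cdots$ and the definition~\eqref{def-Fk-}. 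For the identity $F_0^-(m)=\{0\}$ I would argue that, as $n\to-\infty$, a nonzero backward solution is dominated by the eigenvalue of \emph{smallest} modulus present in its spectral decomposition; applying the spectral lemma (Lemma~\ref{norm-spectrum}, in the form suited to $n\to-\infty$ iteration) gives $\rho_{-\infty}(m,\psi)=|\lambda|$ for some eigenvalue $\lambda$ of $S_p$, hence $\rho_{-\infty}(m,\psi)\le r_0$, so $\rho_{-\infty}(m,\psi)>r_0$ forces $\psi=0$.

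For part~(ii), if $\psi\in F_k^-(m)\setminus\{0\}$ then $\rho_{-\infty}(m,\psi)>r_k$ equals some modulus $r_l$ or $\tilde r_l$; the ordering of $\Lambda$ then forces $0\le l<k$ (indeed $r_l>r_k$ and $\tilde r_l>r_k$ each imply $l<k$). To produce the genuine limit and the convergence of the normalized solutions I would follow the scheme of Proposition~\ref{pro-Fk+}(ii): introduce the spectral projections $P(\alpha),Q(\alpha)$ attached to $\sigma(S_p)\cap\{|z|>\alpha\}$ and $\sigma(S_p)\cap\{|z|<\alpha\}$, pick $\delta$ isolating $r_l$ (resp.\ $\tilde r_l$), set $P_*=P(r_l-\delta)-P(r_l+\delta)$, and invoke the backward forms of Lemma~\ref{projection-lem} and Lemma~\ref{conver-posi-seque}. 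The decisive structural point is that $\{z\in\CC:|z|>r_k\}$ meets $\sigma(S_p)$ in only finitely many eigenvalues, so $P_*$ has finite rank and $S_p$ restricted to $P_*X^\alpha$ is invertible; this is what legitimizes running the perturbation argument in reverse time and yields a subsequence $n_k\to-\infty$ with $v(n_kT,\cdot;mT,\psi)/\|v(n_kT,\cdot;mT,\psi)\|_{X^\alpha}$ converging into $E_l$.

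Part~(iii) follows from~(ii) and the zero-number machinery. Along that subsequence the normalized solution converges in $X^\alpha\hookrightarrow C^1(S^1)$ to some $\phi_l\in E_l\setminus\{0\}$, which by Lemma~\ref{eigen-value}(ii) has only simple zeros with $z(\phi_l)=2l$; $C^1$-closeness then gives $z(v(n_kT,\cdot;mT,\psi))=2l$ for $n_k$ sufficiently negative. Since $n_k\le m$ and the zero number is nonincreasing in $t$ along the backward orbit by Lemma~\ref{zero-number-lem}(ii), we conclude $z(\psi)=z(v(mT,\cdot;mT,\psi))\le z(v(n_kT,\cdot;mT,\psi))=2l<2k$.

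The step I expect to be the main obstacle is part~(ii). Because the parabolic flow is not time-reversible and $S_p$ is not invertible (its spectrum accumulates at $0$), the forward arguments of Proposition~\ref{pro-Fk+} cannot simply be dualized. The crux is that the constraint $\rho_{-\infty}(m,\psi)>r_k$ confines the backward orbit to the finite-dimensional subspace spanned by the finitely many eigenspaces of modulus exceeding $r_k$, on which $S_p$ is invertible; carrying Lemmas~\ref{norm-spectrum}, \ref{projection-lem} and~\ref{conver-posi-seque} over to the $n\to-\infty$ regime through Lemma~\ref{close-two-operators-lem}(ii) is where the real work lies.
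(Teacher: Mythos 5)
Your proposal is correct and follows exactly the route the paper intends: the paper omits this proof, stating only that it is ``similar to that for Proposition~\ref{pro-Fk+}'', and your argument carries out precisely that transposition, using Lemma~\ref{close-two-operators-lem}(ii) together with the backward counterparts of Lemmas~\ref{projection-lem}, \ref{norm-spectrum} and~\ref{conver-posi-seque} supplied by Lemma~\ref{conver-nega-seque}, with the finite rank of $P_*$ (spectrum of the compact operator $S_p$ accumulating only at $0$) justifying the reverse-time perturbation argument, and with the monotonicity of the zero number applied on $[n_kT,mT]$ to get $z(\psi)\le 2l<2k$ in part (iii).
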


\begin{proof}
The proof is similar to that for Proposition \ref{pro-Fk+}, we omit it here.
\end{proof}

\begin{lemma}\label{Fk-iso-Ek}
Assume that $E_{j}, j\geq0 $ be eigenspaces of $S_{p}$ as defined in Lemma \ref{eigen-value}, then for sufficiently large $m\in \mathbb{N}$, one has
\begin{itemize}
  \item [\rm{(i)}]$F_{k}^{+}(m)$ is isomorphic to $E_{k}\oplus E_{k+1}\oplus\cdots;$
  \item [\rm{(ii)}]$F_{k}^{-}(m)$ is isomorphic to $E_{0}\oplus E_{1}\oplus\cdots\oplus E_{k-1}$.

\end{itemize}
\end{lemma}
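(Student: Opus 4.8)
The plan is to establish the isomorphisms by exhibiting explicit linear maps between the filtration spaces $F_k^{\pm}(m)$ and the claimed direct sums of Floquet eigenspaces, and then to verify that these maps are bijective for $m$ large. The essential tool is Lemma~\ref{close-two-operators-lem}, which says that for $m$ (equivalently $n$) large the propagator $S((n+1)T,nT)$ over one period is close in $\mathcal{L}(X^{\alpha})$ to the periodic operator $S_p$. Thus on the remote time windows the asymptotically periodic equation \eqref{asy-equa} behaves like the exactly periodic equation \eqref{lin-periodic-eq}, whose spectral decomposition $X^{\alpha}=\bigoplus_{j\geq0}E_j$ is governed by Lemma~\ref{eigen-value}. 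The strategy is a standard perturbation-of-spectral-projection argument: since the spectral gaps of $S_p$ are strictly separated (Lemma~\ref{eigen-value}(i)), the total projections onto $E_k\oplus E_{k+1}\oplus\cdots$ and onto $E_0\oplus\cdots\oplus E_{k-1}$ perturb continuously, and for $m$ large the perturbed propagator inherits an invariant splitting with the same dimensions.

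For part (i), I would argue as follows. By Proposition~\ref{pro-Fk+}, every nonzero $\psi\in F_k^{+}(m)$ has exponential growth rate exactly $r_l$ or $\tilde r_l$ for some $l\geq k$, and along a subsequence its normalized orbit converges into one of the eigenspaces $E_l$ with $l\geq k$. Let $Q_k$ denote the spectral projection of $S_p$ onto $E_k\oplus E_{k+1}\oplus\cdots$ (the part of $\sigma(S_p)$ inside $\{|z|\leq r_k\}$), which is well defined because $r_k$ is a gap value in $\Lambda$. I claim the restriction $Q_k|_{F_k^{+}(m)}\colon F_k^{+}(m)\to E_k\oplus E_{k+1}\oplus\cdots$ is the desired isomorphism. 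Injectivity follows because a nonzero element of the kernel would lie entirely in the complementary fast subspace $E_0\oplus\cdots\oplus E_{k-1}$ under $S_p$, forcing growth rate at least $r_{k-1}>r_k$ by the closeness estimate of Lemma~\ref{close-two-operators-lem}, contradicting $\rho_\infty(m,\psi)\leq r_k$. Surjectivity follows by a dimension/solvability count: given any target in the slow subspace, one solves backward from the remote future using the perturbed propagator, and the contraction from the spectral gap guarantees the backward orbit stays in $F_k^+(m)$; the quantitative smallness of $R_n=S((n+1)T,nT)-S_p$ makes the corresponding fixed-point map a contraction. Part (ii) is entirely parallel, using Proposition~\ref{pro-Fk-} and the backward dynamics: $F_k^{-}(m)$ consists of backward solutions with $\rho_{-\infty}(m,\psi)>r_k$, whose normalized backward orbits converge into $E_l$ with $l<k$, so the finite-rank projection onto $E_0\oplus\cdots\oplus E_{k-1}$ furnishes the isomorphism.

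The main obstacle I anticipate is controlling the perturbation uniformly over the infinite sequence of time windows and transferring the exact-periodic spectral gap structure to the asymptotic system in a way that pins down the dimensions precisely. Lemma~\ref{close-two-operators-lem} gives closeness only as $n\to\pm\infty$, so the invariant splitting of the asymptotic equation is a genuinely asymptotic object; the role of ``$m$ sufficiently large'' is exactly to push the base time into the regime where $\|R_n\|_{\mathcal{L}(X^{\alpha})}$ is small enough for the perturbed invariant subspaces to exist and to have the correct codimension. I would handle this by freezing the gap width $\delta$ from Proposition~\ref{pro-Fk+}(ii), choosing $m$ so large that $\sup_{n\geq m}\|R_n\|_{\mathcal{L}(X^{\alpha})}$ is below the threshold dictated by that gap, and then invoking the roughness/persistence of exponential dichotomies (or, equivalently, continuity of the Riesz spectral projections under norm-small perturbations of the generator of the discrete cocycle). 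Once the perturbed dichotomy is in place with matching finite dimensions, the projection maps above are automatically isomorphisms, and the zero-number bounds from Proposition~\ref{pro-Fk+}(iii) and Proposition~\ref{pro-Fk-}(iii) serve as a consistency check that the images land in the intended eigenspaces $E_j$ with $z=2j$.
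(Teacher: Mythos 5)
Your proposal is correct and follows essentially the same route as the paper: the paper picks $\lambda$ in the spectral gap above $r_k$, uses Lemma \ref{close-two-operators-lem} together with Lemmas \ref{posi-norm-control} and \ref{nega-norm-control} to ensure $\delta(\lambda,R)<1$ once $m$ is large, and then cites Lemmas \ref{posi-isomorphism} and \ref{nega-isomorphism} (from \cite{92Chen}), whose content is exactly your map --- the spectral projection of $S_p$ restricted to the weighted-boundedness space $F_k^{\pm}(m)$, shown to be an isomorphism by the same contraction/fixed-point (dichotomy-roughness) scheme you sketch. The only differences are cosmetic: you re-derive the appendix lemmas instead of citing them, and in your injectivity step the relevant lower rate is $\tilde r_{k-1}$ rather than $r_{k-1}$ (both exceed $r_k$, so the conclusion stands).
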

\begin{proof}
(i) It is not hard to see that there exists $\varepsilon>0$ such that $(r_{k}+\varepsilon, \tilde{r}_{k-1}-\varepsilon)\cap\Lambda=\emptyset.$
Let $\lambda\in (r_{k}+\varepsilon, \tilde{r}_{k-1}-\varepsilon)$, define
 $$\delta(\lambda,R)=\sum_{k=1}^{n}\lambda^{k-n-1}\|V^{n-k}QR_{k-1}\|+\sum_{k=n+1}^{\infty}\lambda^{k-n-1}\|U^{n-k}PR_{k-1}\|,$$
 with $P=P(\lambda), Q=Q(\lambda), R_{k}(k\geq0)$ being same to Proposition \ref{pro-Fk+}, and $U=S_{p}|_{P(\lambda)X^{\alpha}}, V=S_{p}|_{Q(\lambda)X^{\alpha}}$, by virtue of Lemma \ref{posi-norm-control}, we have $\delta(\lambda, R)<1$.

From the definition of $F_{k}^{+}(m)$ we know that
$$F_{k}^{+}(m)=\{\psi\in X^{\alpha}|\sup_{n\geq0}\lambda^{-n}\|v(nT,\cdot;mT,\psi)\|_{X^{\alpha}}<\infty\},$$
it then follows from Lemma \ref{posi-isomorphism} that $F_{k}^{+}(m)$ is isomorphic to $E_{k}\oplus E_{k+1}\oplus\cdots$.

(ii) Similarly, by Lemma \ref{nega-isomorphism} and Lemma \ref{nega-norm-control}, we can obtain that $F_{k}^{-}(m)$ is isomorphic to $E_{0}\oplus E_{1}\oplus E_{k-1}.$
\end{proof}

\section{Main Results and Proofs}
Assume that $f$ in \eqref{r-d-eq} is a $C^2$ function and $P$ is the corresponding Poincar\'{e} map, then our main results in this section are the following.

\begin{theorem}\label{index-cor}
Assume that $\varphi_{+}, \varphi_{-}$ be two hyperbolic fixed points of $P$, and if
$$
W^{s}(\varphi_{+})\cap W^{u}(\varphi_{-})\backslash\{\varphi_{+},\varphi_{-}\}\neq \emptyset,
$$
then $\operatorname{ind}(\varphi_{-})>\operatorname{ind}(\varphi_{+}).$
\end{theorem}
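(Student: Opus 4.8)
The plan is to exploit the spatial translation invariance of \eqref{r-d-eq}. Fix a point $u_0$ in $W^{s}(\varphi_{+})\cap W^{u}(\varphi_{-})\setminus\{\varphi_{+},\varphi_{-}\}$ and let $u(t,x)=u(t,x;u_0)$ be the associated entire orbit, so that $u(t,\cdot)\to\varphi_{+}$ in $X^{\alpha}$ as $t\to+\infty$ and $u(t,\cdot)\to\varphi_{-}$ as $t\to-\infty$; by Remark \ref{hyp-eigenspace} both $\varphi_{+}$ and $\varphi_{-}$ are spatially homogeneous. Because $f$ does not depend on $x$, the translate $u(t,x+a)$ is again a solution, and differentiating in $a$ at $a=0$ shows that $v(t,\cdot):=u_{x}(t,\cdot)$ solves the linearized equation \eqref{tranver-line-eq} along $u$, namely $v_{t}=v_{xx}+\bar{c}v_{x}+\bar{d}v$. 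Since $\varphi_{\pm}$ are constant in $x$, this spatial derivative satisfies $v(t,\cdot)\to 0$ in $X^{\alpha}$ both as $t\to+\infty$ and as $t\to-\infty$. Here $v=u_{x}$ is the periodic-problem substitute for the orbit velocity used in the autonomous theory: the time derivative $u_{t}$ is \emph{not} a solution of the homogeneous equation \eqref{tranver-line-eq} (it carries an extra $\partial_{t}f$ term), and on the Poincar\'e section the connecting orbit meets $W^{s}(\varphi_{+})$ only at the discrete times $nT$, so the naive velocity argument is unavailable.

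First I would check that $v=u_{x}\not\equiv 0$. If $u_{x}\equiv 0$ then $u$ is spatially homogeneous and solves the scalar ODE $\dot{u}=f(t,u,0)$, whose $T$-period map is a monotone map of $\mathbb{R}$ with derivative $\lambda_{0}$ at $\varphi_{+}$; forward convergence $u(nT)\to\varphi_{+}$ then forces $|\lambda_{0}|<1$, contradicting $\operatorname{ind}(\varphi_{+})>0$, which by Lemma \ref{eigen-value}(i) forces $|\lambda_{0}|>1$. Hence $v\not\equiv 0$, and by backward uniqueness for the parabolic equation \eqref{tranver-line-eq} we get $v(t,\cdot)\not\equiv 0$ for every $t$, so $z(v(t,\cdot))$ is finite for all $t$.

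Next I would read off the two one-sided zero numbers. Write $\operatorname{ind}(\varphi_{+})=2m_{+}+1$ and $\operatorname{ind}(\varphi_{-})=2m_{-}+1$ (both odd by Remark \ref{hyp-eigenspace}). The linearization \eqref{tranver-line-eq} along $u$ is asymptotically periodic as $t\to+\infty$ with limit the linearization $S_{p}^{+}$ at $\varphi_{+}$, and as $t\to-\infty$ with limit $S_{p}^{-}$ at $\varphi_{-}$, so Propositions \ref{pro-Fk+} and \ref{pro-Fk-} apply with these two distinct periodic limits. Since $v\to 0$ forward, its decay rate is a stable modulus $|\lambda_{k_{+}}^{+}|<1$ of $S_{p}^{+}$, whence $k_{+}\ge m_{+}+1$ and the normalized solution converges into $E_{k_{+}}^{+}$; by Lemma \ref{eigen-value}(ii) this gives $z(v(t,\cdot))\to 2k_{+}\ge 2(m_{+}+1)=\operatorname{ind}(\varphi_{+})+1$ as $t\to+\infty$. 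Symmetrically, $v\to 0$ backward along an unstable modulus $|\lambda_{l_{-}}^{-}|>1$ of $S_{p}^{-}$, so $l_{-}\le m_{-}$ and $z(v(t,\cdot))\to 2l_{-}\le 2m_{-}=\operatorname{ind}(\varphi_{-})-1$ as $t\to-\infty$. Finally, since $v=u_{x}$ is a nontrivial solution of \eqref{tranver-line-eq}, Lemma \ref{zero-number-lem}(ii) makes $t\mapsto z(v(t,\cdot))$ nonincreasing, so its limit at $+\infty$ is at most its limit at $-\infty$:
\[
\operatorname{ind}(\varphi_{+})+1\le 2k_{+}\le 2l_{-}\le \operatorname{ind}(\varphi_{-})-1,
\]
which yields $\operatorname{ind}(\varphi_{-})\ge\operatorname{ind}(\varphi_{+})+2>\operatorname{ind}(\varphi_{+})$, as claimed; the choice $\varphi_{+}=\varphi_{-}$ reproduces the exclusion of homoclinic connections. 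I expect the main obstacle to be the content of the first two paragraphs: recognizing that the correct translation-generated solution is $u_{x}$ rather than $u_{t}$, verifying that it decays in \emph{both} time directions and is nontrivial (this is exactly where $\operatorname{ind}(\varphi_{+})>0$ and the spatial homogeneity of $\varphi_{\pm}$ enter), and justifying that the Section 3 machinery applies to the single solution $v=u_{x}$ with the two different periodic limits $S_{p}^{\pm}$.
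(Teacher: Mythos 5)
Your proposal is correct, but it takes a genuinely different route from the paper's. The paper does not use $u_x$ at all: it works with the time-$T$ difference $v(t,\cdot)=u(t+T,\cdot;u_0)-u(t,\cdot;u_0)$, whose initial value is $v_0=P(u_0)-u_0$. This $v$ solves a linear equation (coefficients in integral-mean form) that is asymptotic, as $t\to+\infty$, to the linearization at $\varphi_+$ and, as $t\to-\infty$, to that at $\varphi_-$; the Section 3 machinery (Propositions \ref{pro-Fk+}(iii) and \ref{pro-Fk-}(iii), via Lemma \ref{non-homoclinin}) then sandwiches the single number $z(P(u_0)-u_0)$ strictly between $\operatorname{ind}(\varphi_+)$ and $\operatorname{ind}(\varphi_-)$, and nontriviality of $v_0$ is immediate, since $P(u_0)=u_0$ together with $P^n(u_0)\to\varphi_+$ would force $u_0=\varphi_+$. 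You instead feed the translation-generated solution $v=u_x$ into the same $F_k^{\pm}$/eigenspace analysis to compute the two one-sided eventual zero numbers, and then invoke zero-number monotonicity along the whole connecting orbit to compare the ends; the asymptotic inputs ($S_p^{\pm}$ at the two ends, Remark \ref{hyp-eigenspace} for the odd index and equal moduli $r_j=\tilde r_j$) are identical to the paper's, and both arguments deliver the same sharpened gap $\operatorname{ind}(\varphi_-)\geq\operatorname{ind}(\varphi_+)+2$. What the paper's choice buys is robustness and economy: the difference trick (the route of \cite{92Chen}) works for any fixed points and would survive $x$-dependent nonlinearities, and it needs no separate nontriviality discussion; your choice buys a concrete geometric solution but depends essentially on the $S^1$-equivariance of \eqref{r-d-eq} and the spatial homogeneity of $\varphi_{\pm}$. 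Two spots in your writeup need shoring up: (a) $u(t,\cdot)\to\varphi_{\pm}$ in $X^{\alpha}$ only controls $u_x$ in $H^{2\alpha-1}$, not in $X^{\alpha}=H^{2\alpha}$, so the claimed two-sided decay of $v=u_x$ in $X^{\alpha}$ needs one linear smoothing step, e.g. $\|v(t+1)\|_{X^{\alpha}}\leq C\|v(t)\|_{X^{0}}$ for \eqref{tranver-line-eq} with uniformly bounded coefficients (alternatively, mere $X^{\alpha}$-boundedness of $v(nT)$ at each end, combined with $1\notin\Lambda$ and Lemma \ref{norm-spectrum}, already forces the needed rate dichotomy); and (b) your nontriviality argument for $u_x$ leans on the standing assumption $\operatorname{ind}(\varphi_+)>0$ to get $|\lambda_0^+|>1$ --- legitimate under the paper's conventions, but a case the paper's approach never has to open.
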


\begin{theorem}\label{tranver-theor}
Assume that $\varphi_{+}, \varphi_{-}$ are hyperbolic fixed points of $P$, then $W^{s}(\varphi_{+})$ and $W^{u}(\varphi_{-})$ intersect transversely in $X^{\alpha}$.
\end{theorem}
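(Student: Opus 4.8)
The plan is to reduce transversality at a prescribed intersection point to a single zero-number controlled dimension count, and then to obtain the decisive inequality through the adjoint equation together with Theorem \ref{index-cor}. First, if $W^{s}(\varphi_{+})\cap W^{u}(\varphi_{-})\setminus\{\varphi_{+},\varphi_{-}\}=\emptyset$ the statement is vacuous, so I fix $u_{0}$ in this set and consider the heteroclinic orbit $u(t,\cdot;u_{0})$, which tends to $\varphi_{+}$ as $t\to+\infty$ and to $\varphi_{-}$ as $t\to-\infty$ in $X^{\alpha}\hookrightarrow C^{1}(S^{1})$. Hence the coefficients $\bar c,\bar d$ of the linearization \eqref{tranver-line-eq} converge, and \eqref{tranver-line-eq} is asymptotically $T$-periodic in both time directions, with forward limit \eqref{fixed-point-eq} at $\varphi_{+}$ and backward limit the analogous equation at $\varphi_{-}$; this places us exactly in the framework of Section 3, with periodic limit operators $\hat S_{\varphi_{+}}$ and $\hat S_{\varphi_{-}}$.

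Next I would pin down the two tangent spaces. Combining Lemma \ref{def-Tu0} with Proposition \ref{pro-Fk+} and Lemma \ref{Fk-iso-Ek}(i) for the forward limit, and using hyperbolicity so that $\sigma(\hat S_{\varphi_{+}})$ avoids the unit circle, identifies $T_{u_{0}}W^{s}(\varphi_{+})$ with the stable eigenspace sum $\bigoplus_{j\ge p_{+}+1}E^{\varphi_{+}}_{j}$; by Remark \ref{hyp-eigenspace} the index $\operatorname{ind}(\varphi_{+})$ is odd, say $2p_{+}+1$, so this space has codimension $\operatorname{ind}(\varphi_{+})$ and, by Proposition \ref{pro-Fk+}(iii), every nonzero element has $z\ge 2(p_{+}+1)=\operatorname{ind}(\varphi_{+})+1$. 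Symmetrically, Lemma \ref{def-Tu0}, Proposition \ref{pro-Fk-} and Lemma \ref{Fk-iso-Ek}(ii) for the backward limit give that $T_{u_{0}}W^{u}(\varphi_{-})\cong\bigoplus_{0\le j\le p_{-}}E^{\varphi_{-}}_{j}$ is finite dimensional of dimension $\operatorname{ind}(\varphi_{-})=2p_{-}+1$, with $z\le \operatorname{ind}(\varphi_{-})-1$ on it.

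Transversality at $u_{0}$ is the equality $T_{u_{0}}W^{s}(\varphi_{+})+T_{u_{0}}W^{u}(\varphi_{-})=X^{\alpha}$. Writing $A$ and $B$ for the two tangent spaces, the identity $\operatorname{codim}(A+B)=\operatorname{codim}A-\dim B+\dim(A\cap B)$ (valid here since $A$ is closed of finite codimension and $B$ is finite dimensional) reduces the theorem to showing $\dim(A\cap B)=\operatorname{ind}(\varphi_{-})-\operatorname{ind}(\varphi_{+})$. The bound $\dim(A\cap B)\ge\dim B-\operatorname{codim}A=\operatorname{ind}(\varphi_{-})-\operatorname{ind}(\varphi_{+})$ is automatic, so everything comes down to the reverse inequality: at most $\operatorname{ind}(\varphi_{-})-\operatorname{ind}(\varphi_{+})$ linearly independent solutions of \eqref{tranver-line-eq} can decay in both time directions.

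This reverse inequality is where the real work lies, and I expect it to be the main obstacle. I would argue by contradiction through the adjoint equation: if transversality fails, the proper closed subspace $A+B$ admits a nonzero annihilating functional $\chi$, which I propagate by the formal adjoint of \eqref{tranver-line-eq}. After the time reversal $\tau=-t$ this adjoint is again a scalar parabolic equation on $S^{1}$, so Lemma \ref{zero-number-lem} applies and the zero number of the adjoint solution $w^{*}(t)$ is nondecreasing in $t$; unique continuation keeps $w^{*}$ nontrivial. Using the bilinear invariant $\int_{S^{1}}v\,w^{*}\,dx$ between solutions of \eqref{tranver-line-eq} and its adjoint, the relation $\chi\perp A$ forces $w^{*}(t)\to0$ as $t\to+\infty$ along the adjoint mode dual to the least unstable mode of $\varphi_{+}$, giving $z(w^{*}(t))\le\operatorname{ind}(\varphi_{+})-1$; likewise $\chi\perp B$ forces $w^{*}(t)\to0$ as $t\to-\infty$ along the adjoint mode dual to the least stable mode of $\varphi_{-}$, giving $z(w^{*}(t))\ge\operatorname{ind}(\varphi_{-})+1$. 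These squeeze $z(w^{*})$ into the band $\operatorname{ind}(\varphi_{-})+1\le z(w^{*}(t))\le\operatorname{ind}(\varphi_{+})-1$, which is empty because Theorem \ref{index-cor} yields $\operatorname{ind}(\varphi_{-})>\operatorname{ind}(\varphi_{+})$ from the nonempty intersection, the desired contradiction. The two delicate ingredients I would have to supply are the Floquet duality that the adjoint eigenfunction dual to $E_{j}$ again carries zero number $2j$ (obtained by rerunning Lemma \ref{eigen-value} for the asymptotically periodic adjoint equation) and the precise identification of the one-sided limits of $w^{*}$, for which I would reuse Lemma \ref{close-two-operators-lem} and Propositions \ref{pro-Fk+}--\ref{pro-Fk-} applied to the adjoint.
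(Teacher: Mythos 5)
Your first two paragraphs reproduce the paper's proof exactly: fix $u_{0}$ on the heteroclinic, use Lemma \ref{def-Tu0} together with Propositions \ref{pro-Fk+}, \ref{pro-Fk-} and Lemma \ref{Fk-iso-Ek} to identify $T_{u_{0}}W^{s}(\varphi_{+})=F_{m^{+}+1}^{+}(0)$ (closed, codimension $2m^{+}+1$, zero number $\geq 2(m^{+}+1)$ on it) and $T_{u_{0}}W^{u}(\varphi_{-})=F_{m^{-}+1}^{-}(0)$ (dimension $2m^{-}+1$, zero number $<2(m^{-}+1)$ on it). But you then abandon this machinery at the decisive moment, and this is where you miss the paper's point: no adjoint equation is needed. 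Theorem \ref{index-cor} gives $m^{+}<m^{-}$, so by the monotonicity in Proposition \ref{pro-Fk-}(i) the subspace $F_{m^{+}+1}^{-}(0)\subset T_{u_{0}}W^{u}(\varphi_{-})$ is available; by Lemma \ref{Fk-iso-Ek}(ii) it has dimension exactly $2m^{+}+1=\operatorname{codim}T_{u_{0}}W^{s}(\varphi_{+})$, and by Proposition \ref{pro-Fk-}(iii) every nonzero element of it has $z<2(m^{+}+1)$, while every nonzero element of $F_{m^{+}+1}^{+}(0)$ has $z\geq 2(m^{+}+1)$. Hence $F_{m^{+}+1}^{-}(0)\cap F_{m^{+}+1}^{+}(0)=\{0\}$, and elementary linear algebra (a finite-dimensional complement of the right dimension meeting a closed finite-codimension subspace trivially) yields $T_{u_{0}}W^{s}(\varphi_{+})+T_{u_{0}}W^{u}(\varphi_{-})\supset F_{m^{+}+1}^{+}(0)+F_{m^{+}+1}^{-}(0)=X^{\alpha}$. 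The zero-number dichotomy you already stated is the whole proof; your reduction via $\dim(A\cap B)$ is correct but superfluous.

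The adjoint detour you propose instead is the classical Henry/Brunovsk\'{y}--Joly--Raugel duality route, and while it can in principle be carried out, as written it has a genuine gap beyond the two ingredients you flag. The formal adjoint of \eqref{tranver-line-eq} is well-posed only for decreasing $t$, so ``propagating'' a single annihilating functional $\chi$ does not produce a solution $w^{*}$ defined on all of $\mathbb{R}$ with prescribed behavior at \emph{both} $t\to\pm\infty$; you would first have to show that failure of transversality at $u_{0}$ persists at $P^{n}u_{0}$ for every $n\geq 0$ (this does follow from your dimension identity, constancy of $\operatorname{codim}A(n)$ and $\dim B(n)$ along the orbit, and injectivity of $DP$, Lemma \ref{injective-map}, but you never say so), then choose normalized annihilators $\chi_{n}$ of $A(nT)+B(nT)$, pull each back by the adjoint evolution, and extract a nontrivial global limit via parabolic smoothing, compactness, and backward uniqueness. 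On top of that, the two ``delicate ingredients'' you defer --- the Floquet duality $z=2j$ on adjoint eigenspaces and adjoint analogues of Lemma \ref{close-two-operators-lem} and Propositions \ref{pro-Fk+}--\ref{pro-Fk-} to identify the one-sided limits of $w^{*}$ --- amount to redoing most of Section 3 for the adjoint equation, with the added complication in the periodic circle setting that the eigenvalue pairs $\lambda_{j},\tilde{\lambda}_{j}$ and their two-dimensional eigenspaces must be tracked through the duality. None of this is in the paper, and none of it is necessary: the inequality your contradiction is designed to produce is exactly what Theorem \ref{index-cor} plus the spaces $F_{k}^{\pm}$ already deliver directly.
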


\begin{theorem}\label{morse-smale-theor}
Assume moreover $f$ satisfies (a.1)-(a.2) in equation \eqref{r-d-eq}, and if all the $\omega$-limit sets of $P$ are hyperbolic, then the discrete semiflow $\{P^{n}, n\geq0\}$ is Morse-Smale.
\end{theorem}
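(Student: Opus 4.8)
The plan is to verify directly the three defining conditions of a discrete Morse-Smale system listed in Section 2.3, drawing on the results already established. The existence of a compact global attractor $\mathcal{A}$ is guaranteed by the assumptions (a.1)--(a.2) through the construction carried out in Section \ref{global-attractor}, so it remains only to check conditions (i)--(iii). Condition (i), the injectivity of $P|_{\mathcal{A}}$ and of $DP(\varphi)$ for $\varphi\in\mathcal{A}$, is precisely the content of Lemma \ref{injective-map}, and nothing further is needed there. Condition (iii), the transverse intersection of the local stable manifold of $\varphi_{+}$ and the global unstable manifold of $\varphi_{-}$ for any pair of periodic points, is supplied by Theorem \ref{tranver-theor} once we invoke the equivalence between hyperbolic fixed points of $P$ and hyperbolic periodic points (cf. \cite{92San,19Shen}); the degenerate case $\varphi_{+}=\varphi_{-}$, corresponding to a homoclinic loop, is ruled out by Theorem \ref{index-cor}.

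The substance of the argument therefore lies in condition (ii): showing that the non-wandering set $\mathcal{N}$ of $P|_{\mathcal{A}}$ consists of finitely many hyperbolic periodic points. First I would invoke the recent characterization in \cite{21She} that every non-wandering point of \eqref{r-d-eq} is a limit point, hence lies in some $\omega$-limit set of $P$. By hypothesis every such $\omega$-limit set is hyperbolic, and by \cite[Theorem 2.1]{92San} together with \cite[Theorem 5.1]{19Shen} the structure theory for $\omega$-limit sets on the circle (they embed into a two-torus carrying a linear flow) forces a hyperbolic $\omega$-limit set to collapse onto a single spatially homogeneous hyperbolic fixed point of $P$. Consequently each point of $\mathcal{N}$ is a hyperbolic fixed point, and by the equivalence recalled above it is a hyperbolic periodic point of the time-$T$ map.

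It then remains to establish finiteness. For a hyperbolic fixed point $\varphi$ the operator $DP(\varphi)-I$ is invertible, since $1\notin\sigma(DP(\varphi))$; applying the implicit function theorem to $u\mapsto P(u)-u$ shows that $\varphi$ is an isolated zero, hence isolated in the fixed-point set and therefore isolated in $\mathcal{N}$. Since the non-wandering set is always closed and is contained in the compact attractor $\mathcal{A}$, the set $\mathcal{N}$ is itself compact; a compact set all of whose points are isolated is finite. This yields condition (ii).

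Assembling (i), (ii) and (iii) then gives the Morse-Smale property for $\{P^{n},\,n\geq0\}$. The main obstacle is condition (ii), and within it the identification step: reducing a hyperbolic $\omega$-limit set to a single hyperbolic fixed point of $P$ is not formal but relies essentially on the torus-embedding structure theory of \cite{92San,19Shen}, under which hyperbolicity is incompatible with a nontrivial (e.g. minimal or higher-dimensional) limit set and thus forces collapse onto an equilibrium of the Poincar\'e map. Once this is secured, the finiteness is a soft consequence of isolation together with compactness of $\mathcal{A}$, and the remaining two conditions are direct citations of Lemma \ref{injective-map} and Theorem \ref{tranver-theor}.
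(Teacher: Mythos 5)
Your proposal is correct and follows essentially the same route as the paper: reduce to the three defining conditions, cite Lemma \ref{injective-map} and Theorem \ref{tranver-theor} for (i) and (iii), and for (ii) use the non-wandering characterization of \cite{21She} together with the structure theorems of \cite{92San} and \cite{19Shen} to collapse each hyperbolic $\omega$-limit set to a single spatially homogeneous hyperbolic fixed point, with finiteness from isolation of hyperbolic points plus compactness of $\mathcal{A}$. Your write-up is in fact slightly more explicit than the paper's (the implicit-function-theorem isolation step and the role of \cite{21She} are only implicit there), but the argument is the same.
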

\begin{remark}
{\rm
  Theorem \ref{index-cor} excludes the existence of homoclinic connection between hyperbolic fixed points. Note that in the case $f(t,u,u_x)=f(t,u,-u_x)$,  the non-wandering set consists of fixed points of $P$(see \cite[Theorem 4.1]{21She}), the assumption in Theorem \ref{morse-smale-theor} is equivalent to that all the fixed points are hyperbolic.}
\end{remark}

\subsection{Proof of Theorem \ref{index-cor}}
To prove Theorem \ref{index-cor}, we first prove the following.
\begin{lemma}\label{non-homoclinin}
Assume $\varphi_{+}$ is a hyperbolic fixed point of the map $P$, then for any $u_{0}\in W^{s}(\varphi_{+})\backslash\{\varphi_{+}\}$ $($resp. $u_{0}\in W^{u}(\varphi_{-})\backslash\{\varphi_{-}\})$, one has $z(P(u_{0})-u_{0})>\operatorname{ind}(\varphi_{+})$. $($resp. $ z(P(u_{0})-u_{0})< \operatorname{ind}(\varphi_{-})).$
\end{lemma}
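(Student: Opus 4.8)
The plan is to study the function $w(t,x)=u(t+T,x;u_{0})-u(t,x;u_{0})$, whose value at $t=0$ is exactly $w(0)=P(u_{0})-u_{0}$. Since $f$ is $T$-periodic, both $u(t+T,\cdot;u_{0})$ and $u(t,\cdot;u_{0})$ solve \eqref{r-d-eq}, so by the mean value theorem $w$ solves a linear parabolic equation of the form \eqref{asy-equa}, with coefficients $c(t,x),d(t,x)$ obtained by integrating $\partial f_{3},\partial f_{2}$ along the segment joining the two solutions. As the orbit is bounded in $C^{1}$, these coefficients are bounded and continuous, so Lemma \ref{zero-number-lem} applies to $w$, and $w(0)\neq0$ (otherwise $u_{0}$ would be a fixed point of $P$ on $W^{s}(\varphi_{+})$, forcing $u_{0}=\varphi_{+}$). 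First I would verify asymptotic periodicity in the sense of Section 3: as $t\to+\infty$ both $u(t+T,\cdot;u_{0})$ and $u(t,\cdot;u_{0})$ converge in $C^{1}$ to the spatially homogeneous $T$-periodic orbit through $\varphi_{+}$ (using Remark \ref{hyp-eigenspace} and $P^{n}(u_{0})\to\varphi_{+}$), so $c,d$ converge to the coefficients $\hat{c},\hat{d}$ of the linearization \eqref{fixed-point-eq}; hence the limiting periodic operator is $S_{p}=\hat{S}$, and its eigenvalues $\{\lambda_{0},\lambda_{j},\tilde{\lambda}_{j}\}$ and eigenspaces $E_{j}$ are those of $\hat{S}$.

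For $u_{0}\in W^{s}(\varphi_{+})\setminus\{\varphi_{+}\}$, one has $w(nT)=P^{n+1}(u_{0})-P^{n}(u_{0})\to0$ as $n\to\infty$, and $w$ is forward-asymptotically periodic, so Proposition \ref{pro-Fk+} applies. By (the proof of) Proposition \ref{pro-Fk+}(i), $\rho_{\infty}(0,w(0))$ equals the modulus of some eigenvalue of $S_{p}$; since $w(nT)\to0$ this modulus is $<1$, hence $\rho_{\infty}(0,w(0))\leq r_{k+1}$, where by Remark \ref{hyp-eigenspace} the index $\operatorname{ind}(\varphi_{+})=2k+1$ is odd and $r_{k}>1>r_{k+1}$ is the spectral gap across the unit circle. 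Thus $w(0)\in F_{k+1}^{+}(0)\setminus\{0\}$, and Proposition \ref{pro-Fk+}(iii) yields
\[
z(P(u_{0})-u_{0})=z(w(0))\geq 2(k+1)=\operatorname{ind}(\varphi_{+})+1>\operatorname{ind}(\varphi_{+}).
\]

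For $u_{0}\in W^{u}(\varphi_{-})\setminus\{\varphi_{-}\}$, the backward continuation of $w$ exists and $w(nT)\to0$ as $n\to-\infty$; here $w$ is backward-asymptotically periodic to the linearization around $\varphi_{-}$, and Proposition \ref{pro-Fk-} (which uses only the backward convergence of Lemma \ref{close-two-operators-lem}(ii)) applies. Since $W^{u}(\varphi_{-})$ is tangent to the unstable subspace (eigenvalues of modulus $>1$), the backward rate satisfies $\rho_{-\infty}(0,w(0))>1>r_{l+1}$, where $\operatorname{ind}(\varphi_{-})=2l+1$; hence $w(0)\in F_{l+1}^{-}(0)\setminus\{0\}$. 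By Proposition \ref{pro-Fk-}(ii) there is $l'\leq l$ such that the normalized $w(n_{k}T)/\|w(n_{k}T)\|_{X^{\alpha}}$ converge to some $\phi\in E_{l'}$, which by Lemma \ref{eigen-value}(ii) has only simple zeros with $z(\phi)=2l'$. As the convergence is in $X^{\alpha}\hookrightarrow C^{1}(S^{1})$ and $\phi$ has only simple zeros, $z(w(n_{k}T))=2l'$ for all large $k$; the monotonicity of the zero number (Lemma \ref{zero-number-lem}(ii)) between $n_{k}T<0$ and $0$ then gives
\[
z(P(u_{0})-u_{0})=z(w(0))\leq z(w(n_{k}T))=2l'\leq 2l=\operatorname{ind}(\varphi_{-})-1<\operatorname{ind}(\varphi_{-}).
\]

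The routine but essential technical point is the verification of asymptotic periodicity, which rests on the uniform $C^{1}$ convergence of the orbit to the periodic orbit through $\varphi_{\pm}$; combined with the parity statement of Remark \ref{hyp-eigenspace}, this pins $\rho_{\infty}$ (resp.\ $\rho_{-\infty}$) to the correct side of the spectral gap and produces the strict inequalities. I expect the unstable case to be the main obstacle: because the relevant backward rate lands exactly on a spectral value $r_{l}$, the membership $w(0)\in F_{l+1}^{-}(0)$ together with Proposition \ref{pro-Fk-}(iii) only gives $z(w(0))<2(l+1)$, i.e.\ $z(w(0))\leq\operatorname{ind}(\varphi_{-})$, which is not strict; recovering strictness forces me to use the finer directional convergence of Proposition \ref{pro-Fk-}(ii) into the eigenspace $E_{l'}$ together with the monotonicity of $z$, rather than the index bound alone.
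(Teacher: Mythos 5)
Your proposal is correct and follows essentially the same path as the paper's proof: both set $v(t)=u(t+T,\cdot;u_{0})-u(t,\cdot;u_{0})$, use the asymptotic periodicity of the coefficients and Lemma \ref{norm-spectrum} together with the decay of $v(nT)$ (forward, resp.\ backward) to place $v(0)=P(u_{0})-u_{0}$ in $F_{m^{+}+1}^{+}(0)$, resp.\ $F_{m^{-}+1}^{-}(0)$, and then conclude via Proposition \ref{pro-Fk+}(iii), resp.\ Proposition \ref{pro-Fk-}. Your extra care in the unstable case---obtaining the strict bound $z(v(0))\leq 2m^{-}$ from Proposition \ref{pro-Fk-}(ii) (convergence of the normalized backward orbit into an eigenspace $E_{l'}$ whose elements have only simple zeros) combined with the monotonicity of Lemma \ref{zero-number-lem}(ii)---is not a different route but a correct unpacking of the detail the paper leaves implicit when it cites Proposition \ref{pro-Fk-}(iii), whose literal statement only yields $z(v(0))<2(m^{-}+1)$.
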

\begin{proof}
Given $u_{0}\in W^{s}(\varphi_{+})\backslash\{\varphi_{+}\}$, let $u(t,x;u_{0})$ and $u(t,x;\varphi_{+})$ be two solutions of equation \eqref{r-d-eq}. It then follows from the definition of $W^{s}(\varphi_{+})$ that $u(t,x;u_{0})-u(t,x;\varphi_{+})\to 0$ in $X^{\alpha}$, as $t\rightarrow\infty$.

Let $v(t,x;v_{0})=u(t+T,x;u_{0})-u(t,x;u_{0})$, then $v$ satisfies
\begin{subequations}\label{no-homo-eq}
\begin{align}
&v_{t}=v_{xx}+c(t,x)v_{x}+d(t,x)v,\\
&v(0)=v_{0},
\end{align}
\end{subequations}
where $v_{0}=P(u_{0})-u_{0}$, and
$$c(t,x)=\int_{0}^{1}\partial f_{3}(t,su(t+T,x;u_{0})+(1-s)u(t,x;u_{0}),su_{x}(t+T,x;u_{0})+(1-s)u_{x}(t,x;u_{0}))ds,$$
$$d(t,x)=\int_{0}^{1}\partial f_{2}(t,su(t+T,x;u_{0})+(1-s)u(t,x;u_{0}),su_{x}(t+T,x;u_{0})+(1-s)u_{x}(t,x;u_{0}))ds.$$
By virtue of the smoothness of $f$, we know that $c(t,x), d(t,x)\in C(\mathbb{R}^+\times S^1)$ and be locally H\"{o}lder continuous in $t$; and moreover, by the asymptotic periodic of $u(t,x;u_{0})$
$$
c(t,x)\rightarrow \partial f_{3}(t,u(t,x;\varphi_{+}),u_{x}(t,x;\varphi_{+})),
$$
$$d(t,x)\rightarrow \partial f_{2}(t,u(t,x;\varphi_{+}),u_{x}(t,x;\varphi_{+}))
,$$
as $t\to \infty$.

Note that $\varphi_{+}$ is hyperbolic, by Remark \ref{hyp-eigenspace},
$$
|\lambda_{0}|>|\lambda_{1}|=|\tilde{\lambda}_{1}|>|\lambda_{2}|=|\tilde{\lambda}_{2}|>\ldots>|\lambda_{j}|=|\tilde{\lambda}_{j}|>\cdots
$$
and $\tilde r_j=r_j$, $j=1,\cdots.$ Hence, there exists $m^+\in \mathbb{N}$ such that $2m^++1=ind(\varphi_{+})$ and $r_{m^+}>1>r_{m^++1}$.

Let $F_{k}^{+}$ and $F_{k}^{-}$ be subspaces of $X^{\alpha}$ associated with \eqref{no-homo-eq}(as defined in \eqref{def-Fk+} and \eqref{def-Fk-} respectively). We {\it claim } that $v_{0}\in F_{m^++1}^{+}(0)$. In fact, if $v_{0}\notin  F_{m^++1}^{+}(0)$, then $\rho_{\infty}(0,v_{0})>r_{m^++1}$, that is $$\limsup_{n\rightarrow\infty}\|v(nT,\cdot;v_{0})\|^{\frac{1}{n}}_{X^{\alpha}}>r_{m^++1},$$
this combines with Lemma \ref{norm-spectrum} lead to
$$\limsup_{n\rightarrow\infty}\|v(nT,\cdot;v_{0})\|^{\frac{1}{n}}_{X^{\alpha}}=r\geq r_{m^+}>1,$$
a contradiction to that $\|v(nT,\cdot;v_{0})\|_{X^{\alpha}}\rightarrow 0$, as $n\rightarrow\infty$, we proved the {\it claim }. It then follows by Proposition \ref{pro-Fk+}(iii) that $z(v_{0})=z(P(u_{0})-u_{0})\geq2(m^++1)>\operatorname{ind} (\varphi^+)$.

For $u_{0}\in W^{u}(\varphi_{-})\backslash\{\varphi_{-}\}$, $u(t,x;u_{0})$ and $u(t,x;\varphi_{-})$ be two backward solutions of equation \eqref{r-d-eq}. By the definition of $W^{u}(\varphi_{-})$, we have $u(t,x;u_{0})-u(t,x;\varphi_{-})\to 0$ in $X^{\alpha}$ as $t\rightarrow-\infty$.
Then $v(t,x;v_{0})=u(t+T,x,u_{0})-u(t,x;u_{0})$ is the backward solution of equation \eqref{no-homo-eq}.

Let $m^-\in\mathbb{N}$ be such that $2m^-+1=ind(\varphi_{-})$, hence $r_{m^-}>1>r_{m^-+1}$, and $v_{0}\in F_{m^-+1}^{-}(0)$. In fact, if $v_{0}\notin  F_{m^-+1}^{-}(0)$, then $\rho_{-\infty}(0,v_{0})\leq r_{m^-+1}$, that is
$$
\liminf_{n\rightarrow\infty}\frac{1}{\|v(-nT,\cdot;v_{0})\|^{\frac{1}{n}}_{X^{\alpha}}}\leq r_{m^-+1},
$$
and there exists a subsequence $\{n_{k}T\}\subset\{nT\}_{n\geq0}$ such that $$
\frac{1}{\|v(-n_{k}T,\cdot;v_{0})\|^{\frac{1}{n_k}}_{X^{\alpha}}}\leq \frac{r_{m^-+1}+1}{2},\quad k\gg 1.
$$
As a consequence, $\|v(-n_{k}T,\cdot;v_{0})\|^{\frac{1}{n_k}}_{X^{\alpha}}\geq \frac{2}{r_{m^-+1}+1}>1$, $k\gg 1$, this leads to a contradiction to that $\|v(nT,\cdot;v_{0})\|_{X^{\alpha}}\\ \rightarrow 0$ as $n\rightarrow-\infty$. Thus, $v_{0}\in F_{m^-+1}^{-}(0)$. It then follows from Proposition \eqref{pro-Fk-}(iii) that $z(v_{0})=z(P(u_{0})-u_{0})\leq 2m^-<\operatorname{ind} (\varphi_-)$.
\end{proof}

\begin{proof}[Proof of Theorem \ref{index-cor}]
  It is a direct result of Lemma \ref{non-homoclinin}.
\end{proof}
\subsection{Proof of Theorem \ref{tranver-theor}}
\begin{proof}[Proof of Theorem \ref{tranver-theor}]
It is sufficient to check that
$$T_{u_{0}}W^{s}(\varphi_{+})+T_{u_{0}}W^{u}(\varphi_{-})=X^{\alpha}$$
for  any $u_{0}\in W^{s}(\varphi_{+})\cap W^{u}(\varphi_{-})\backslash\{\varphi_{+},\varphi_{-}\}$.

Note that $\varphi_{+},\varphi_{-}$ are hyperbolic fixed points of $P$,  by Remark \ref{hyp-eigenspace}, we assume that $2m^{+}+1=\operatorname{ind}(\varphi_{+}), 2m^{-}+1=\operatorname{ind}(\varphi_{-})$. It then follows from Lemma \ref{def-Tu0}(i) that
$$T_{u_{0}}W^{s}(\varphi_{+})=F_{m^{+}+1}^{+}(0)=\{v_{0}\in X^{\alpha}|\limsup_{n\rightarrow\infty}\|v(nT,\cdot;v_{0})\|^{\frac{1}{n}}_{X^{\alpha}}\leq r_{m^{+}+1}\},$$
where $F_{m^{+}+1}^{+}(0)$ is as defined in \eqref{def-Fk+} and $v(t,x;v_{0})$ is the solution of \eqref{tranver-line-eq} with $v(0,\cdot;v_{0})=v_0$. On the one hand, by virtue of Proposition \ref{pro-Fk+}(iii)
 $$
 z(\psi)\geq2(m^{+}+1), \quad \forall\psi\in T_{u_{0}}W^{s}(\varphi_{+})\setminus\{0\}.
 $$
On the other hand, by Lemma \ref{Fk-iso-Ek}(i), $T_{u_{0}}W^{s}(\varphi_{+})$ is isomorphic to $E^+_{m^{+}+1}\oplus E^+_{m^{+}+2}\oplus\cdots$ ($E^+_{k},k\geq0$ are eigenspaces for the linearized equation of \eqref{r-d-eq} at $\varphi_+$). Therefore, $\operatorname{codim}T_{u_{0}}W^{s}(\varphi_{+})=2m^{+}+1.$

By Lemma \ref{def-Tu0}(ii), we have
$$T_{u_{0}}W^{u}(\varphi_{-})=F_{m^{-}+1}^{-}(0)=\{v_{0}\in X^{\alpha}|\liminf_{n\rightarrow-\infty}\|v(nT,\cdot;v_{0})\|^{\frac{1}{n}}_{X^{\alpha}}> r_{m^{-}+1}\},$$
where $F_{m^{-}+1}^{-}(0)$ is as defined in \eqref{def-Fk-} and $v(t,x;v_{0})$ is the backward continuation of $v_0$.
By Theorem \ref{index-cor} and Proposition \ref{pro-Fk-}(i), $m^{+}<m^{-}$; and moreover,
$$
F_{m^{-}+1}^{-}(0)\supset F_{m^{+}+1}^{-}(0)\quad \mbox{i.e.}\quad T_{u_{0}}W^{s}(\varphi_{-})\supset F_{m^{+}+1}^{-}(0).
$$
According to Proposition \ref{pro-Fk-}(iii),
$$
z(\psi)<2(m^{+}+1),\quad \forall\psi\in F_{m^{+}+1}^{-}(0)\setminus\{0\},
$$
and by Lemma \ref{Fk-iso-Ek}(ii), $F_{m^{+}+1}^{-}(0)$ is isomorphic to $E^-_{0}\oplus E^-_{1}\oplus\cdots\oplus E^{-}_{m^{+}}$ ($E^-_{k},k\geq0$ are eigenspaces for the linearized equation of \eqref{r-d-eq} at $\varphi_-$ ). Hence, $\dim F_{m^{+}+1}^{-}(0)=2m^{+}+1$.

Observing that $F_{m^{+}+1}^{-}(0)\cap F_{m^{+}+1}^{+}(0)=\{0\}$, we have that $T_{u_{0}}W^{s}(\varphi_{+})+T_{u_{0}}W^{u}(\varphi_{-})=X^{\alpha}.$
\end{proof}
\subsection{Proof of Theorem \ref{morse-smale-theor}}
\begin{proof}[Proof of Theorem \ref{morse-smale-theor}]
 Since we have already checked the map $P$ and the map $DP$ all are injective (see Lemma \ref{injective-map}) and proved the tranversality of stable and unstable manifolds for connecting hyperbolic points, the remainder is to show that the non-wandering set $\mathcal{N}$ of $P|_{\mathcal{A}}$ consists of finite number of hyperbolic periodic points of $P$. Let $\omega(u_0)$ be the $\omega$-limit set of $P$, then there exist some $w\in X^{\alpha}$ such that $\omega(u_0)\subset \{w_a|w_a=w(\cdot+a),\ a\in S^1\}$(see \cite[Theorem 1]{92San}), which combine with the fact that if  $\omega(u_0)$ is hyperbolic, it is then spatially-homogeneous (see \cite[Theorem 5.1]{19Shen}) imply $\omega(u_0)$ is a hyperbolic point.

 Note that hyperbolic points are discrete, and $\mathcal{A}$ is compact, then $\mathcal{N}\subset \mathcal{A}$ contains at most a finite number of hyperbolic points.
\end{proof}
\section{Appendix}
In this Appendix, we list some properties about bounded linear operators on Banach spaces and invariant manifolds theory for hyperbolic fixed points from \cite[Appendix B and C]{92Chen}.

Let $(X,\|\cdot\|)$ be a (real or complex) Banach space and $\mathcal{L}(X)$ be the Banach space of all bounded linear operators in $X$ equipped with the operator norm. In the following we are concerned with sequences $\{v(n)\}_{n\geq0}, v(n)\neq0 (n\geq0)$ and $\{\xi_{n}\}_{n\geq0}$ related by the formula
\begin{equation}\label{Sp and Rn-sequen}
v(n+1)=S_{p}v(n)+\xi_{n} \quad n\geq0,
\end{equation}
where $S_{p}\in \mathcal{L}(X)$. Sometimes, we require also
\begin{equation}\label{Rn-norm-limit}
\lim_{n\rightarrow\infty}\frac{\parallel \xi_{n}\parallel}{\parallel v(n)\parallel}=0.
\end{equation}

\begin{definition}
\begin{itemize}
  \item [\rm{(i)}]Denote by $\Lambda$ the set of all nonnegative numbers $\lambda$ for which $\sigma(S_{p})\cap\{z\in \mathbb{C}\mid|z|=\lambda\}\neq\emptyset$.
  \item [\rm{(ii)}]For each $a\in [0,\infty)\setminus\Lambda$, let $P(a)$ and $Q(a)$ be the projection operators associated with the decomposition of the spectrum $\sigma(S_{p})\cap\{|z|>a\}$ and $\sigma(S_{p})\cap\{|z|<a\}$, respectively, and let $U(a)$ and $V(a)$ be the restrictions of $S_{p}$ to $P(a)X$ and $Q(a)X$, respectively.
  \item [\rm{(iii)}]For each $a\in [0,\infty)\setminus\Lambda$, define
\begin{equation}
\mathcal{X}(a)=\lim_{n\rightarrow\infty}\frac{\parallel P(a)v(n)\parallel}{\parallel Q(a)v(n)\parallel}.
\end{equation}
\end{itemize}
\end{definition}

\begin{lemma}\label{projection-lem}
Let $\{v(n)\}_{n\geq0}\subset X$ satisfy  \eqref{Sp and Rn-sequen} and \eqref{Rn-norm-limit}
. If $[a,b]\cap\Lambda=\emptyset$, then one of the following alternative holds true:
\begin{itemize}
  \item [\rm{(i)}]$$ \lim_{n\rightarrow\infty}\frac{\parallel Pv(n)\parallel}{\parallel Qv(n)\parallel}=\infty \quad\mbox{and}\quad \liminf_{n\rightarrow\infty}\parallel v(n)\parallel^{1/n}\geq b;
$$
  \item [\rm{(ii)}]$$ \lim_{n\rightarrow\infty}\frac{\parallel Pv(n)\parallel}{\parallel Qv(n)\parallel}=0 \quad\mbox{and}\quad \limsup_{n\rightarrow\infty}\parallel v(n)\parallel^{1/n}\leq a;
$$
\end{itemize}
here $P:=P(a)=P(b),$ and $Q:=Q(a)=Q(b).$
\end{lemma}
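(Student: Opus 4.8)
The plan is to read this as a roughness-of-dichotomy statement. The unperturbed recursion $v(n+1)=S_p v(n)$ has an exponential dichotomy across the gap $[a,b]$, furnished by the spectral projections $P=P(a)=P(b)$ and $Q=Q(a)=Q(b)$ together with the restrictions $U=U(a)$ and $V=V(a)$; the hypothesis \eqref{Rn-norm-limit} says the perturbation $\xi_n$ is asymptotically negligible relative to the solution. The target conclusion is that every orbit aligns asymptotically with either the expanding subspace $PX$ (alternative (i)) or the contracting subspace $QX$ (alternative (ii)). All estimates will be asymptotic, valid only for $n\ge N$, since $\xi_n$ is controlled merely relative to $\|v(n)\|$ and only in the limit.

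First I would fix adapted norms to sharpen the one-step estimates. Because $\sigma(U)\subset\{|z|>b\}$ and $\sigma(V)\subset\{|z|<a\}$, I can pick constants $\gamma<a<b<\beta$ and replace $\|\cdot\|$ by an equivalent norm (still denoted $\|\cdot\|$) with $\|Uy\|\ge\beta\|y\|$ on $PX$, $\|Vy\|\le\gamma\|y\|$ on $QX$, and $\|v\|=\max(\|Pv\|,\|Qv\|)$ on $X$; passing to an equivalent norm changes neither the ratio limits nor the $n$-th-root growth rates, and preserves the relative smallness \eqref{Rn-norm-limit}. Applying $P$ and $Q$ to \eqref{Sp and Rn-sequen}, and using that they commute with $S_p$, gives $Pv(n+1)=U\,Pv(n)+P\xi_n$ and $Qv(n+1)=V\,Qv(n)+Q\xi_n$. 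Writing $p_n=\|Pv(n)\|$, $q_n=\|Qv(n)\|$ and $\tilde r_n=\|\xi_n\|/\|v(n)\|\to 0$, the basic bounds are $p_{n+1}\ge\beta p_n-\tilde r_n(p_n+q_n)$ and $q_{n+1}\le\gamma q_n+\tilde r_n(p_n+q_n)$.

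Next I would run a cone argument with $K=\{v:\|Pv\|\ge\|Qv\|\}$. Once $4\tilde r_n<\beta-\gamma$ (valid for $n\ge N$), the two bounds show $K$ is forward invariant: if $p_n\ge q_n$ then $p_{n+1}\ge(\beta-2\tilde r_n)p_n\ge(\gamma+2\tilde r_n)p_n\ge q_{n+1}$. In this regime the ratio $s_n=q_n/p_n\le 1$ satisfies $s_{n+1}\le(\gamma s_n+2\tilde r_n)/(\beta-2\tilde r_n)$, a contraction-plus-vanishing-term recursion of asymptotic rate $\gamma/\beta<1$; hence $s_n\to 0$, i.e. $\|Pv(n)\|/\|Qv(n)\|\to\infty$, while $p_n\ge c\,(b')^n$ for some $b'>b$ gives $\liminf_n\|v(n)\|^{1/n}\ge b$. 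This is alternative (i). The complementary case is that $v(n)\notin K$ for \emph{every} $n\ge N$; then $q_n>p_n$, so $\|v(n)\|=q_n$ and $q_{n+1}\le(\gamma+2\tilde r_n)q_n$ force $\limsup_n\|v(n)\|^{1/n}\le a$. By forward invariance of $K$ these two cases are mutually exclusive and exhaustive for $n\ge N$, which already yields the dichotomy in the growth rates.

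The step I expect to be the main obstacle is proving $\|Pv(n)\|/\|Qv(n)\|\to 0$ in the contracting case: since $U$ \emph{expands}, there is no usable forward upper bound on $p_{n+1}$ in terms of $p_n$, so the smallness of the unstable component cannot be extracted from the forward recursion. My remedy is to invert $U$ and propagate backward: $Pv(n)=U^{-1}Pv(n+1)-U^{-1}P\xi_n$ gives $p_n\le\beta^{-1}p_{n+1}+\beta^{-1}\tilde r_n(p_n+q_n)$, and dividing by $q_n$ (using $q_{n+1}/q_n\le\gamma+2\tilde r_n$ and $\phi_n:=p_n/q_n<1$) produces the backward inequality $\phi_n\le(\gamma/\beta+o(1))\phi_{n+1}+o(1)$. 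Iterating $k$ times and sending $k\to\infty$, the geometric factor $(\gamma/\beta)^k$ annihilates the bounded term $\phi_{n+k}$ while the tail $\sum_{j\ge0}(\gamma/\beta)^j\,o(1)$ is dominated by $\sup_{m\ge n}\tilde r_m\to 0$, so $\phi_n\to 0$, completing alternative (ii). The only genuinely global input is the spectral gap $[a,b]\cap\Lambda=\emptyset$, which underlies both the adapted norms and the contraction ratio $\gamma/\beta<1$ that drives every estimate above.
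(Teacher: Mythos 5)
Your proof is correct. One point of comparison to flag up front: the paper itself offers no proof of this lemma --- it is stated in the Appendix as a quoted result from \cite[Appendix B]{92Chen} --- so your argument can only be measured against the original source, and there it is essentially the same standard dichotomy argument: adapted (equivalent) norms realizing the spectral gap, so that $\|Uy\|\geq\beta\|y\|$ on $PX$ and $\|Vy\|\leq\gamma\|y\|$ on $QX$ with $\gamma<a<b<\beta$; one-step component estimates; forward invariance of the cone $\{\|Pv\|\geq\|Qv\|\}$ once $\tilde r_n=\|\xi_n\|/\|v(n)\|$ is small, which makes the two alternatives exhaustive and mutually exclusive; and, crucially, inversion of $U$ (legitimate, since $\sigma(U)\subset\{|z|>b\}$ forces $0\notin\sigma(U)$) to propagate the unstable component backward in the contracting case. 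You correctly identified that last step as the only genuinely delicate one --- the forward recursion gives no useful upper bound on $\|Pv(n+1)\|$ --- and your iterated inequality $\phi_n\leq(\gamma/\beta+o(1))\phi_{n+k}\cdot(\cdots)+O(\sup_{m\geq n}\tilde r_m)$, using the a priori bound $\phi_m\leq 1$ valid throughout case (ii), closes it properly. Remaining loose ends are all harmless and worth one line each in a written version: equivalent norms change neither the ratio limits nor $\|v(n)\|^{1/n}$, nor the validity of \eqref{Rn-norm-limit}; the degenerate cases $P=0$ or $Q=0$ reduce to a single scalar-type estimate; and in case (ii) one has $\|Qv(n)\|=\|v(n)\|\neq 0$, so the quotient $\phi_n$ is well defined (while in case (i) a momentary $\|Qv(n)\|=0$ only strengthens the conclusion).
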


\begin{lemma}\label{norm-spectrum}
Let $\{v(n)\}_{n\geq0}$ satisfy \eqref{Sp and Rn-sequen} and \eqref{Rn-norm-limit}. If $\Lambda$ is nowhere dense in $[0,\infty)$, then there exists $\lambda\in\Lambda$ such that
$$\lim_{n\rightarrow\infty}\parallel v(n)\parallel^{1/n}=\lambda.$$
\end{lemma}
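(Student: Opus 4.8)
The plan is to reduce the whole statement to Lemma \ref{projection-lem}, which controls the exponential growth of $\{v(n)\}$ across any closed interval disjoint from $\Lambda$. Write $\ell=\liminf_{n\to\infty}\|v(n)\|^{1/n}$ and $L=\limsup_{n\to\infty}\|v(n)\|^{1/n}$; since $v(n)\neq 0$ these are well defined, and from \eqref{Rn-norm-limit} we get $\|\xi_n\|\le\|v(n)\|$ for all large $n$, whence $\|v(n+1)\|\le(\|S_p\|+1)\|v(n)\|$ and therefore $0\le\ell\le L\le\|S_p\|+1<\infty$. The argument rests on one reformulation of Lemma \ref{projection-lem}: for every closed interval $[a,b]$ with $[a,b]\cap\Lambda=\emptyset$, alternative (i) gives $\ell\ge b$ while alternative (ii) gives $L\le a$, so at least one of $\ell\ge b$, $L\le a$ holds; equivalently, one can never have both $\ell<b$ and $L>a$. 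I shall also use that $\Lambda$, being the part of the (closed) spectrum on circles, is closed in $[0,\infty)$, so that ``nowhere dense'' means $\Lambda$ has empty interior and its complement is open and dense; consequently every nonempty open subinterval of $[0,\infty)$ contains a closed subinterval disjoint from $\Lambda$.

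First I would show that the limit exists, i.e.\ $\ell=L$. Suppose instead $\ell<L$. Then $(\ell,L)$ is a nonempty open interval, and by the density remark above it contains a closed interval $[a,b]$ with $[a,b]\cap\Lambda=\emptyset$ and $\ell<a<b<L$. But then $\ell<b$ and $L>a$ hold simultaneously, contradicting the dichotomy. Hence $\ell=L$, and I set $\lambda:=\lim_{n\to\infty}\|v(n)\|^{1/n}$, which is finite.

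It remains to prove $\lambda\in\Lambda$. Assume $\lambda\notin\Lambda$ with $\lambda>0$. Using again that $\Lambda$ is closed with empty interior, I choose a closed interval $[a,b]$ disjoint from $\Lambda$ with $a<\lambda<b$. Then $\ell=\lambda<b$ and $L=\lambda>a$ hold together, once more contradicting the dichotomy; hence $\lambda\in\Lambda$. The only boundary case to treat separately is $\lambda=0$: if $0\notin\Lambda$ then, since $\Lambda\neq\emptyset$ (a nonzero Banach space has nonempty spectrum), the number $m:=\min\Lambda$ is positive, and taking $[a,b]=[0,b]$ with $0<b<m$ gives $Q(b)=0$ and $P(b)=\mathrm{id}$, so alternative (i) of Lemma \ref{projection-lem} must hold and forces $\ell\ge b>0$, contradicting $\lambda=0$.

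The main obstacle is not in the logical skeleton, which is an immediate double application of the interval dichotomy, but in the careful bookkeeping of the two places where it is invoked with degenerate projections: one must check that when $P(a)$ or $Q(a)$ vanishes the corresponding alternative in Lemma \ref{projection-lem} is the one that survives, so that the limit-ratio statements make sense and still yield the needed bound on $\ell$ or $L$. Once these degeneracies are handled, the topological fact that a closed nowhere-dense set leaves a gap inside every open subinterval of $[0,\infty)$ supplies all the intervals required, and the proof closes.
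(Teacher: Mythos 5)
Your proof is correct, and it is essentially the intended argument: the paper itself states Lemma \ref{norm-spectrum} without proof (it is quoted from the appendix of \cite{92Chen}), and your route---extracting from Lemma \ref{projection-lem} the dichotomy ``$\ell\geq b$ or $L\leq a$'' for any $[a,b]$ disjoint from $\Lambda$, then using that $\Lambda$ is closed (continuous image of the compact spectrum under $z\mapsto|z|$) and nowhere dense to place a gap interval first inside $(\ell,L)$, forcing $\ell=L=:\lambda$, and then around a putative $\lambda\notin\Lambda$---is exactly the standard derivation behind the cited result. Your handling of the degenerate case $\lambda=0\notin\Lambda$, where $0<b<\min\Lambda$ gives $Q(b)=0$ so that alternative (ii) of Lemma \ref{projection-lem} is impossible and alternative (i) yields $\ell\geq b>0$, is also sound.
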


\begin{lemma}\label{conver-posi-seque}
Let $\{v(n)\}_{n\geq0}$ satisfy \eqref{Sp and Rn-sequen} and \eqref{Rn-norm-limit}, and let $0<\alpha<\beta$ not be in $\Lambda$. Assume that $P_{*}:=P(\alpha)-P(\beta)$ has finite dimensional range, that is, $\dim P_{*}(X)<\infty$. If $\mathcal{X}(\alpha)=\infty$ and $\mathcal{X}(\beta)=0$, then the normalized sequence $\hat{v}(n):=v(n)/\parallel v(n)\parallel(n\geq0)$ is relatively compact in $X$, and its $\omega-$limit set
$$
\omega(\hat{v}):=\bigcap_{n\geq0}\mbox{closure}\{\hat{v}(m)|m\geq n\}
$$
is a compact set in $\{x\in P_{*}(X)|\parallel x\parallel=1\}$.
\end{lemma}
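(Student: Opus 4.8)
The plan is to reduce the problem to finite dimensions by exploiting the $S_p$-invariant spectral splitting of $X$ determined by the three projections $P(\beta)$, $P_*=P(\alpha)-P(\beta)$ and $Q(\alpha)$. Since $\alpha<\beta$ and neither lies in $\Lambda$, these spectral projections commute and satisfy $P(\alpha)P(\beta)=P(\beta)$, so one obtains the direct sum
\[
X=P(\beta)X\oplus P_*X\oplus Q(\alpha)X,
\]
corresponding to the parts of $\sigma(S_p)$ in $\{|z|>\beta\}$, $\{\alpha<|z|<\beta\}$ and $\{|z|<\alpha\}$, respectively; by hypothesis the middle summand $P_*X$ is finite dimensional. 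Accordingly I would decompose $v(n)=p(n)+w(n)+q(n)$ with $p(n)=P(\beta)v(n)$, $w(n)=P_*v(n)$ and $q(n)=Q(\alpha)v(n)$. Because $v(n)\neq0$ for every $n$, the entire argument reduces to showing that the finite-dimensional middle component $w(n)$ asymptotically dominates the other two; once this is secured, relative compactness is automatic, since the normalized sequence then lives, up to a vanishing error, in the unit sphere of the finite-dimensional space $P_*X$.

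The key step is to convert the two hypotheses into quantitative domination estimates and combine them. Using $P(\alpha)v(n)=p(n)+w(n)$, $Q(\alpha)v(n)=q(n)$, $P(\beta)v(n)=p(n)$ and $Q(\beta)v(n)=w(n)+q(n)$, the assumption $\mathcal{X}(\alpha)=\infty$ reads $\|q(n)\|=o(\|p(n)+w(n)\|)$ and the assumption $\mathcal{X}(\beta)=0$ reads $\|p(n)\|=o(\|w(n)+q(n)\|)$. Hence for any small $\eta>0$ and all large $n$,
\[
\|p(n)\|\le\eta\bigl(\|w(n)\|+\|q(n)\|\bigr),\qquad \|q(n)\|\le\eta\bigl(\|p(n)\|+\|w(n)\|\bigr).
\]
Adding these and rearranging gives $(1-\eta)\bigl(\|p(n)\|+\|q(n)\|\bigr)\le 2\eta\,\|w(n)\|$, so that $\|p(n)\|+\|q(n)\|\le\frac{2\eta}{1-\eta}\|w(n)\|$. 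As $\eta$ is arbitrary, this yields $\|p(n)\|+\|q(n)\|=o(\|w(n)\|)$, and in particular $\|w(n)\|>0$ for large $n$ (otherwise $v(n)=0$). This two-sided squeeze is the heart of the argument and, I expect, the main obstacle---one must check that the two ratio hypotheses genuinely pin the mass into the middle spectral band rather than letting it leak to the top or bottom summand.

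Finally I would pass to the normalized sequence. From $\|p(n)\|+\|q(n)\|=o(\|w(n)\|)$ it follows that $\|v(n)\|=\|w(n)\|\,(1+o(1))$, so that
\[
\hat v(n)=\frac{w(n)}{\|v(n)\|}+\frac{p(n)+q(n)}{\|v(n)\|},
\]
where the second term tends to $0$ while $w(n)/\|v(n)\|\in P_*X$ has norm tending to $1$. Since $P_*X$ is finite dimensional, the bounded sequence $\{w(n)/\|v(n)\|\}$ is relatively compact, and hence so is $\{\hat v(n)\}$: every subsequence of $\hat v(n)$ admits a sub-subsequence along which $w(n)/\|v(n)\|$ converges and the error vanishes. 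Any limit point of $\hat v(n)$ is therefore a limit of $w(n)/\|v(n)\|$, so it lies in $P_*X$ and has norm $1$; consequently $\omega(\hat v)\subset\{x\in P_*X:\|x\|=1\}$. Being a nonempty closed subset of this compact unit sphere, $\omega(\hat v)$ is compact, which completes the proof.
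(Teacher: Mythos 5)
Your proof is correct. One caveat on the comparison: the paper itself does not prove this lemma at all---it appears in the Appendix as a quoted result from \cite[Appendix B]{92Chen}---so the relevant benchmark is the original argument of Chen, Chen and Hale, and yours is essentially that argument: split $X=P(\beta)X\oplus P_{*}X\oplus Q(\alpha)X$ via the Riesz projections (using $P(\alpha)P(\beta)=P(\beta)$, so that $P_{*}$ is the spectral projection for $\sigma(S_{p})\cap\{z:\alpha<|z|<\beta\}$), translate $\mathcal{X}(\alpha)=\infty$ and $\mathcal{X}(\beta)=0$ into the two domination estimates, squeeze to obtain $\|P(\beta)v(n)\|+\|Q(\alpha)v(n)\|=o(\|P_{*}v(n)\|)$, and conclude by finite dimensionality of $P_{*}(X)$. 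Your squeeze step is sound: adding the two $\eta$-inequalities and absorbing $\eta(\|p(n)\|+\|q(n)\|)$ into the left side is legitimate for $\eta<1$, and $v(n)\neq0$ rules out $P_{*}v(n)=0$ for large $n$. One observation worth making explicit: your argument never invokes the recursion \eqref{Sp and Rn-sequen} or the smallness condition \eqref{Rn-norm-limit}; the hypotheses $\mathcal{X}(\alpha)=\infty$, $\mathcal{X}(\beta)=0$, $v(n)\neq0$ and $\dim P_{*}(X)<\infty$ already suffice, and \eqref{Sp and Rn-sequen}--\eqref{Rn-norm-limit} matter only when those ratio hypotheses are verified in applications through the dichotomy of Lemma \ref{projection-lem}, as happens in the proof of Proposition \ref{pro-Fk+}(ii). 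So your proof is, if anything, slightly more general than the statement, and it buys a self-contained verification of a result the paper only imports.
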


The lemma below is concerned with  the following iterations:
\begin{equation}\label{Sp and Rn-oper-sequen}
\begin{aligned}
v(n+1)&=S_{p}v(n)+R_{n}v(n)\quad n\geq0,\\
v(0)&=\psi\in X,
\end{aligned}
\end{equation}
where $S_{p}\in\mathcal{L}(X)$ and $\{R_{n}\}_{n\geq0}$ is a bounded sequence in $\mathcal{L}(X)$. For each $\psi\in X$, denoted by $v(n;\psi)$ the sequence defined by \eqref{Sp and Rn-oper-sequen}.

\begin{lemma}\label{posi-isomorphism}
Let $S_{p}\in \mathcal{L}(X)$ satisfy
\begin{equation}
(a,b)\cap\Lambda=\emptyset \quad\mbox{for some}\quad 0\leq a<b.
\end{equation}
Write $P=P(\mu), Q=Q(\mu),U=U(\mu)=S_{p}|_{P(\mu)X}$, and $V=V(\mu)=S_{p}|_{Q(\mu)X}$ for an arbitray $\mu\in(a,b).$ Let $\{R_{n}\}_{n\geq0}\subset\mathcal{L}(X)$ and define
\begin{equation}
\begin{aligned}
\delta(\lambda,R)&=\sum_{k=1}^{n}\lambda^{k-n-1}\parallel V^{n-k}QR_{k-1}\parallel\\
&+\sum_{k=n+1}^{\infty}\lambda^{k-n-1}\parallel U^{n-k}PR_{k-1}\parallel \quad a<\lambda<b.\nonumber
\end{aligned}
\end{equation}
Assume that
\begin{equation}\label{control-1}
\delta(\lambda,R)<1\quad\mbox{for some}\quad a<\lambda<b,
\end{equation}
and consider
\begin{equation}
F=\{\psi\in X|\sup_{n\geq0}\lambda^{-n}\parallel v(n;\psi)\parallel<\infty\}.
\end{equation}
Then, the projection $F\rightarrow Q(X):\psi\mapsto Q\psi$ gives an isomorphism between Banach spaces, $F$ is a closed subspace of $X$, and its codimension in $X$ is equal to that of $Q(X)$ in $X$.
\end{lemma}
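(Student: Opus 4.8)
The plan is to realize $F$ as a Lyapunov--Perron graph over $Q(X)$ via a contraction argument in a weighted sequence space, the standard device in the theory of discrete exponential dichotomies. First I would fix $\lambda\in(a,b)$ with $\delta(\lambda,R)<1$ and introduce the Banach space $\mathcal{Y}_{\lambda}$ of sequences $w=\{w(n)\}_{n\geq0}\subset X$ with norm $\|w\|_{\lambda}=\sup_{n\geq0}\lambda^{-n}\|w(n)\|<\infty$; by definition $\psi\in F$ exactly when the orbit $\{v(n;\psi)\}$ generated by \eqref{Sp and Rn-oper-sequen} lies in $\mathcal{Y}_{\lambda}$, and $F$ is a linear subspace because $\psi\mapsto v(n;\psi)$ is linear. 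Splitting $v(n)=Pv(n)+Qv(n)$ and applying $P,Q$ to \eqref{Sp and Rn-oper-sequen} gives the pair of recursions $Pv(n+1)=U\,Pv(n)+PR_{n}v(n)$ and $Qv(n+1)=V\,Qv(n)+QR_{n}v(n)$. Since $\mu\in(a,b)$ and $(a,b)\cap\Lambda=\emptyset$, the spectral radius of $V$ is at most $a<\lambda$ and that of $U^{-1}$ is at most $1/b<1/\lambda$, so $\|V^{m}\|$ and $\|U^{-m}\|$ both decay geometrically at rates strictly better than $\lambda$; this is precisely what makes the two series in $\delta(\lambda,R)$ and in the Green's operator below converge.

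Next, for each prescribed $\eta\in Q(X)$ I would define the affine operator $\mathcal{T}_{\eta}$ on $\mathcal{Y}_{\lambda}$ by
\[
(\mathcal{T}_{\eta}v)(n)=V^{n}\eta+\sum_{k=1}^{n}V^{n-k}QR_{k-1}v(k-1)-\sum_{k=n+1}^{\infty}U^{n-k}PR_{k-1}v(k-1),
\]
using forward summation for the $V$-part and backward summation for the $U$-part so as to suppress the unstable growth. A direct computation on the two components shows that any fixed point $v$ of $\mathcal{T}_{\eta}$ solves \eqref{Sp and Rn-oper-sequen}, satisfies $Qv(0)=\eta$ (the $U$-series contributes only to $P(X)$), and lies in $\mathcal{Y}_{\lambda}$. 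The key estimate is that $\mathcal{T}_{\eta}$ is a contraction: inserting the weight $\lambda^{-n}$, bounding $\|(v_{1}-v_{2})(k-1)\|\leq\lambda^{k-1}\|v_{1}-v_{2}\|_{\lambda}$, and taking the supremum over $n$ reproduces exactly $\|\mathcal{T}_{\eta}v_{1}-\mathcal{T}_{\eta}v_{2}\|_{\lambda}\leq\delta(\lambda,R)\|v_{1}-v_{2}\|_{\lambda}$, which is $<1$ by \eqref{control-1}. Banach's fixed point theorem then yields a unique $v_{\eta}\in\mathcal{Y}_{\lambda}$, and since the fixed-point equation is affine-linear in $(\eta,v)$ the map $\eta\mapsto v_{\eta}$ is bounded linear with $\|v_{\eta}\|_{\lambda}\leq(1-\delta)^{-1}\sup_{n}\lambda^{-n}\|V^{n}\eta\|$.

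I would then set $J\eta:=v_{\eta}(0)$ and verify that $Q|_{F}$ and $J$ are mutually inverse. From the formula $Qv_{\eta}(0)=\eta$, so $Q\circ J=\mathrm{id}_{Q(X)}$. Conversely, for $\psi\in F$ the orbit $\{v(n;\psi)\}$ is a point of $\mathcal{Y}_{\lambda}$ solving \eqref{Sp and Rn-oper-sequen} with $Q$-initial value $Q\psi$, hence by uniqueness of the fixed point it equals $v_{Q\psi}$, giving $\psi=J(Q\psi)$ and $J\circ Q=\mathrm{id}_{F}$. Thus $Q|_{F}\colon F\to Q(X)$ is a bounded linear bijection with bounded inverse $J$, i.e.\ a Banach-space isomorphism. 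Moreover $J$ is bounded below, since $\|\eta\|=\|QJ\eta\|\leq\|Q\|\,\|J\eta\|$, and $Q(X)$ is complete, so $F=J(Q(X))$ is complete and therefore closed in $X$.

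Finally, for the codimension I would establish the topological direct sum $X=F\oplus P(X)$: every $x$ writes as $x=J(Qx)+\bigl(x-J(Qx)\bigr)$ where the second summand is annihilated by $Q$ and hence lies in $P(X)$, while $F\cap P(X)=\{0\}$ because any $y\in F$ with $Qy=0$ satisfies $y=J(Qy)=0$. Consequently $\mathrm{codim}_{X}F=\dim P(X)=\mathrm{codim}_{X}Q(X)$. The main obstacle is the contraction step: one must set up the forward/backward Green's operator with exactly the index conventions appearing in $\delta(\lambda,R)$ and exploit the geometric decay of $\|V^{m}\|$ and $\|U^{-m}\|$ so that the weighted estimate closes up precisely to the constant $\delta(\lambda,R)$; once that is in hand, the isomorphism, closedness, and codimension statements are routine bookkeeping around the fixed point.
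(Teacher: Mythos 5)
Your proof is correct. Note that the paper itself offers no proof of this lemma: it is imported verbatim from Chen, Chen and Hale \cite{92Chen} (Appendices B and C), and your Lyapunov--Perron contraction in the weighted space $\mathcal{Y}_{\lambda}$ is exactly the classical argument behind that source, with $\delta(\lambda,R)$ read, as it must be, with a supremum over $n\geq 0$. The one step to write out fully is $J\circ Q=\mathrm{id}_{F}$: uniqueness of the fixed point only applies after you verify that the orbit $\{v(n;\psi)\}$ of a given $\psi\in F$ is itself a fixed point of $\mathcal{T}_{Q\psi}$, which follows by propagating the $Q$-component forward and the $P$-component backward via $Pv(n)=U^{-(m-n)}Pv(m)-\sum_{k=n+1}^{m}U^{n-k}PR_{k-1}v(k-1)$ and letting $m\to\infty$, the boundary term vanishing because the spectral radius of $U^{-1}$ is at most $1/b<1/\lambda$ while $\|v(m)\|\leq\lambda^{m}\|v\|_{\lambda}$.
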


\begin{lemma}\label{posi-norm-control}
Assume all the conditions in Lemma \ref{posi-isomorphism} are hold except \eqref{control-1}. Then, for any $\lambda\in(a,b),$ there exists a constant $M(\lambda)>0$ such that
$$
\delta(\lambda,R)\leq M(\lambda)\sup_{n\geq0}\parallel R_{n}\parallel.
$$
Therefore, choose $\lambda\in(a,b)$, then $\delta(\lambda,R)<1$ for $\sup_{n\geq0}\parallel R_{n}\parallel$  sufficiently small.
\end{lemma}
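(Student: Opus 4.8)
The plan is to estimate the two sums defining $\delta(\lambda,R)$ separately, exploiting the spectral gap $(a,b)\cap\Lambda=\emptyset$ to obtain geometric decay estimates for the forward powers of $V$ and the backward powers of $U$, and then to sum the resulting geometric series. Throughout I write $\|R\|_\infty:=\sup_{n\ge0}\|R_n\|$; the goal is a bound of the form $\delta(\lambda,R)\le M(\lambda)\|R\|_\infty$ in which $M(\lambda)$ is independent both of the sequence $\{R_n\}_{n\ge0}$ and of the running index $n$.

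First I would record the decay estimates. Since $(a,b)\cap\Lambda=\emptyset$, the spectrum of $V=S_{p}|_{Q(\mu)X}$ is contained in $\{z\in\mathbb{C}:|z|\le a\}$ while the spectrum of $U=S_{p}|_{P(\mu)X}$ is contained in $\{z\in\mathbb{C}:|z|\ge b\}$; in particular $0\notin\sigma(U)$, so $U$ is invertible on $P(\mu)X$ and $\sigma(U^{-1})\subset\{z:|z|\le 1/b\}$. (Note that the projections $P(\mu),Q(\mu)$ do not vary as $\mu$ ranges over the gap $(a,b)$, so they may be used freely for every admissible $\lambda$.) Fix $\lambda\in(a,b)$ and choose auxiliary radii $\rho\in(a,\lambda)$ and $\tau\in(1/b,1/\lambda)$, which is possible precisely because $a<\lambda<b$. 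Since the spectral radius of $V$ is at most $a<\rho$ and that of $U^{-1}$ is at most $1/b<\tau$, Gelfand's spectral radius formula yields constants $C_V,C_U>0$ with
\[
\|V^{m}\|\le C_V\rho^{m},\qquad \|U^{-m}\|\le C_U\tau^{m},\qquad m\ge0.
\]

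Next I would substitute these into $\delta(\lambda,R)$. In the first sum $n-k\ge0$, so $\|V^{n-k}QR_{k-1}\|\le C_V\|Q\|\,\rho^{n-k}\|R\|_\infty$; the substitution $m=n-k$ turns this sum into $C_V\|Q\|\|R\|_\infty\,\lambda^{-1}\sum_{m=0}^{n-1}(\rho/\lambda)^{m}$, a geometric series with ratio $\rho/\lambda<1$ bounded by $C_V\|Q\|\|R\|_\infty/(\lambda-\rho)$. In the second sum $k\ge n+1$, so $U^{n-k}=(U^{-1})^{k-n}$ and $\|U^{n-k}PR_{k-1}\|\le C_U\|P\|\,\tau^{k-n}\|R\|_\infty$; the substitution $j=k-n$ turns it into $C_U\|P\|\|R\|_\infty\,\tau\sum_{j\ge1}(\lambda\tau)^{j-1}$, a geometric series with ratio $\lambda\tau<1$ (which is exactly why $\tau<1/\lambda$ was imposed). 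Both series sum to bounds independent of $n$, giving
\[
\delta(\lambda,R)\le\Big(\frac{C_V\|Q\|}{\lambda-\rho}+\frac{C_U\|P\|\,\tau}{1-\lambda\tau}\Big)\|R\|_\infty=:M(\lambda)\|R\|_\infty.
\]
The final assertion is then immediate: any sequence with $\sup_{n\ge0}\|R_n\|<1/M(\lambda)$ forces $\delta(\lambda,R)<1$.

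I expect the only genuinely delicate point to be the extraction of the uniform geometric bounds for $V^{m}$ and $U^{-m}$ and the correct pairing of the auxiliary radii with the gap endpoints: one must verify that $a<\lambda$ leaves room for $\rho$ with $\rho/\lambda<1$ in the stable direction, and that $\lambda<b$ leaves room for $\tau$ with $\lambda\tau<1$ in the unstable direction, so that both geometric series converge with a constant uniform in $n$. Everything else is routine re-indexing and summation, and the resulting $M(\lambda)$ depends only on $\lambda$, the gap $(a,b)$, the projections $P,Q$, and the chosen radii $\rho,\tau$.
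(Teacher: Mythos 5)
Your proof is correct and is essentially the intended argument: the paper states this lemma without proof, quoting it from \cite{92Chen}, and the proof there is exactly your spectral-gap computation --- since $(a,b)\cap\Lambda=\emptyset$ forces $\sigma(V)\subset\{|z|\le a\}$ and $\sigma(U^{-1})\subset\{|z|\le 1/b\}$, Gelfand's formula gives $\|V^m\|\le C_V\rho^m$ and $\|U^{-m}\|\le C_U\tau^m$ for $a<\rho<\lambda$ and $1/b<\tau<1/\lambda$, and the two geometric series then sum to a bound $M(\lambda)\sup_{n\ge0}\|R_n\|$ uniform in the running index $n$. Your choice of auxiliary radii, the observation that $P(\mu),Q(\mu)$ do not depend on $\mu\in(a,b)$, and the explicit constant $M(\lambda)=C_V\|Q\|/(\lambda-\rho)+C_U\|P\|\tau/(1-\lambda\tau)$ all match the standard treatment, so there is nothing to add.
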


All of the results in the above have their counterparts for backward sequence  $\{v(n)\}_{n\leq0}, v(n)\neq0$,
\begin{equation}\label{nega-Sp and Rn-sequen}
\begin{aligned}
v(n+1)&=S_{p}v(n)+\xi_{n} \quad n<0,\\
v(0)&=\psi\in X,
\end{aligned}
\end{equation}
where $S_{p}\in \mathcal{L}(X), v(n)\neq0(n\leq0)$, and $\{\xi_{n}\}_{n<0}\subset X$, satisfies
\begin{equation}\label{nega-norm-Rn}
\lim_{n\rightarrow-\infty}\frac{\parallel\xi_{n}\parallel}{\parallel v(n)\parallel}=0.
\end{equation}

\begin{lemma}\label{conver-nega-seque}
Let $\{v(n)\}_{n\leq0}\subset X$ satisfy \eqref{nega-Sp and Rn-sequen} and \eqref{nega-norm-Rn}. Then the statements in Lemma \ref{projection-lem}, Lemma \ref{norm-spectrum} and Lemma \ref{conver-posi-seque} remain true under the corresponding conditions, if all of the limits as $n\rightarrow\infty$ there are replaced by the corresponding limits as $n\rightarrow-\infty$ and if the $\omega-$limit set in Lemma \ref{conver-posi-seque} is replaced by $\alpha-$limit set defined by
$$
\alpha(\hat{v}):=\bigcap_{n\leq0}\mbox{closure}\{\hat{v}(m)|m\leq n\}
$$
where the closure is with respect to the strong topology of $X$.
\end{lemma}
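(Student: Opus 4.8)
The plan is to establish each of the three backward statements by re-running the proof of its forward counterpart in \cite{92Chen} with the index reversed, using the very same spectral decomposition of $S_p$. Fix a gap $[a,b]$ with $[a,b]\cap\Lambda=\emptyset$ and write $X=P(a)X\oplus Q(a)X$, $U=U(a)=S_p|_{P(a)X}$, $V=V(a)=S_p|_{Q(a)X}$, so that $\sigma(U)\subset\{|z|\ge b\}$ and $\sigma(V)\subset\{|z|\le a\}$. Projecting \eqref{nega-Sp and Rn-sequen} gives the two scalarized recursions $Pv(n+1)=U\,Pv(n)+P\xi_n$ and $Qv(n+1)=V\,Qv(n)+Q\xi_n$, now for $n<0$. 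The whole point is that, compared with the forward situation, the roles of the expanding block $U$ and the contracting block $V$ are interchanged once we let $n\to-\infty$: the block that is forward-contracting becomes the dominant one backward.

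For the $P$-component I would exploit that $\sigma(U)$ is bounded away from $0$, so $U$ is boundedly invertible on $P(a)X$ with spectral radius of its inverse at most $1/b$. Hence $Pv(n)=U^{-1}\big(Pv(n+1)-P\xi_n\big)$ is legitimate for every $n\le -1$, and iterating down from $n=0$ expresses $Pv(-m)$ as $U^{-m}P\psi$ plus a perturbation sum; the geometric decay of $\|U^{-m}\|$ forces $\|Pv(n)\|$ to shrink as $n\to-\infty$. The $Q$-component is treated without any inversion: the given backward sequence already satisfies the forward recursion $Qv(n+1)=V\,Qv(n)+Q\xi_n$, and the forward-contractivity of $V$ forces $\|Qv(n)\|$ to dominate as $n\to-\infty$. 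Combining the two estimates and absorbing the error via \eqref{nega-norm-Rn} yields the reversed dichotomy of Lemma \ref{projection-lem}: either $\|Pv(n)\|/\|Qv(n)\|\to 0$ with $\limsup_{n\to-\infty}\|v(n)\|^{1/n}\le a$, or the opposite alternative with the bound $\ge b$, where the sign of the exponent now accounts for $n<0$.

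The analog of Lemma \ref{norm-spectrum} then follows exactly as in the forward case: since $\Lambda$ is nowhere dense, every $\rho\in[0,\infty)\setminus\Lambda$ sits in such a gap, the reversed dichotomy pins the $\liminf$ and $\limsup$ of $\|v(n)\|^{1/n}$ (as $n\to-\infty$) on opposite sides of $\rho$, and letting $\rho$ range over the complement of $\Lambda$ squeezes the limit onto a single point $\lambda\in\Lambda$. For the analog of Lemma \ref{conver-posi-seque} I would take $0<\alpha<\beta$ outside $\Lambda$ with $P_*=P(\alpha)-P(\beta)$ of finite rank, use the reversed $\mathcal{X}(\alpha)=\infty$, $\mathcal{X}(\beta)=0$ to show that the normalized sequence $\hat v(n)=v(n)/\|v(n)\|$ is, as $n\to-\infty$, asymptotically concentrated in the finite-dimensional space $P_*X$; finite-dimensionality then gives relative compactness and places the $\alpha$-limit set $\alpha(\hat v)$ inside the unit sphere of $P_*X$.

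The step I expect to be the main obstacle is exactly the non-invertibility of $S_p$: because $S_p$ is compact, $0\in\sigma(S_p)$ and the contracting block $V$ generally fails to be invertible, so one cannot mechanically run the forward recursion in reverse on $Q(a)X$. The resolution is to invert only the expanding block $U$—whose spectrum is bounded away from $0$—and to estimate the $Q$-component purely forward, controlling the cumulative contribution of the $\xi_n$ by a discrete Gronwall summation that converges precisely because of the decay hypothesis \eqref{nega-norm-Rn}. Once this asymmetry between the two blocks is handled, the reversed proofs are otherwise verbatim transcriptions of the forward arguments, with the $\omega$-limit set replaced throughout by the $\alpha$-limit set defined in the statement.
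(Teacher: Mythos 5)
The paper does not actually prove this lemma: it is stated in the Appendix as one of the results imported from \cite{92Chen} (Appendices B and C), so there is no in-paper argument to compare against. Your overall plan --- transcribe the forward proofs with the index reversed, swap the roles of the expanding and contracting blocks, and invert only $U$, whose spectrum is bounded away from $0$ --- is the standard route and is what the cited source does; you also correctly identify that the compactness of $S_{p}$ (hence non-invertibility of $V$) is the structural obstacle, and your stated conclusions (the reversed dichotomy, the squeeze through the gaps of $\Lambda$ for the analogue of Lemma \ref{norm-spectrum}, and compactness of $\{\hat v(n)\}$ via the finite rank of $P_{*}$ for the analogue of Lemma \ref{conver-posi-seque}) are all the right ones.

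There is, however, one step whose mechanism fails as written. You claim that the geometric decay of $\|U^{-m}\|$ ``forces $\|Pv(n)\|$ to shrink as $n\to-\infty$'' and that forward contractivity of $V$ ``forces $\|Qv(n)\|$ to dominate,'' proposing to combine these decoupled estimates by a Gronwall summation. Taken literally, this would prove that the $Q$-component \emph{always} dominates backward, i.e., it would yield a single alternative rather than the dichotomy of Lemma \ref{projection-lem} --- yet backward sequences realizing the opposite alternative exist (e.g., $v(n)=U^{n}\psi$ with $\psi\in P(a)X$). The estimates decouple incorrectly because \eqref{nega-norm-Rn} controls $\|P\xi_{n}\|$ only by $o(\|v(n)\|)$, not by $o(\|Pv(n)\|)$: in the $Q$-dominant alternative $\|v(n)\|\to\infty$ as $n\to-\infty$, so the forcing injected into the inverted recursion $Pv(n)=U^{-1}\bigl(Pv(n+1)-P\xi_{n}\bigr)$ can make $\|Pv(n)\|$ grow backward, merely more slowly than $\|Qv(n)\|$. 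The proof must instead run on the coupled ratio $\rho_{n}=\|Pv(n)\|/\|Qv(n)\|$ (or its reciprocal): writing $p_{n}=\|Pv(n)\|$, $q_{n}=\|Qv(n)\|$ and using $\|UPv(n)\|\geq b\,p_{n}$, $\|VQv(n)\|\leq a\,q_{n}$, $\|\xi_{n}\|\leq\epsilon_{n}\|v(n)\|$ with $\epsilon_{n}\to0$ as $n\to-\infty$, one obtains that in the small-$\epsilon_{n}$ regime the cone $\{q\leq\delta p\}$ is forward invariant with geometric contraction of $q_{n}/p_{n}$, while $\rho_{n}$ expands forward outside a small cone; a standard cone-escape argument then yields exactly the two alternatives as $n\to-\infty$, and everything downstream in your sketch (the squeeze for the norm limit and the $P_{*}$-concentration for the $\alpha$-limit set $\alpha(\hat v)$) goes through as you describe. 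This ratio/cone iteration, which is the heart of the forward proofs in \cite{92Chen}, is the ingredient your proposal is missing.
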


We also consider the backward analogue of \eqref{Sp and Rn-oper-sequen},
\begin{equation}\label{nega-Sp and Rn-oper-sequen}
\begin{aligned}
v(n+1)&=S_{p}v(n)+R_{n}v(n)\quad n<0,\\
v(0)&=\psi\in X,
\end{aligned}
\end{equation}
where $S_{p}\in\mathcal{L}(X)$ and $\{R_{n}\}_{n<0}\subset\mathcal{L}(X)$.
\begin{lemma}\label{nega-isomorphism}
Let $S_{p}\in \mathcal{L}(X)$ satisfy
\begin{equation}
(a,b)\cap\Lambda=\emptyset \quad\mbox{for some}\quad 0\leq a<b.
\end{equation}
Write $P=P(\mu), Q=Q(\mu),U=U(\mu)$, and $V=V(\mu)$ for an arbitrary $\mu\in(a,b).$ Let $\{R_{n}\}_{n\leq0}\subset\mathcal{L}(X)$ and define
\begin{equation}
\begin{aligned}
\delta(\lambda,R)&=\sum_{n+1\leq k\leq0}\lambda^{k-n-1}\parallel U^{n-k}PR_{k-1}\parallel\\
&+\sum_{k\leq n}\lambda^{k-n-1}\parallel V^{n-k}QR_{k-1}\parallel.\nonumber
\end{aligned}
\end{equation}
Assume that
\begin{equation}\label{nega-control-1}
\delta(\lambda,R)<1\quad\mbox{for some}\quad a<\lambda<b,
\end{equation}
and consider
\begin{equation}
F=\{\psi\in X|\mbox{there exists} \{v(n)\}_{n\leq0}\mbox{satisfying \eqref{nega-Sp and Rn-oper-sequen}}and\sup_{n\leq0}\lambda^{-n}\parallel v(n)\parallel<\infty\}.
\end{equation}
Then,
\begin{itemize}
  \item [\rm{(i)}]the projection $F\rightarrow P(X):\psi\mapsto P\psi$ is an isomorphism and $\dim F=\dim PX$;
  \item [\rm{(ii)}]for each $\psi\in F$, the sequence $\{v(n)\}_{n\leq0}$ satisfying \eqref{nega-Sp and Rn-oper-sequen} and
$$
\sup_{n\leq0}\lambda^{-n}\parallel v(n)\parallel<\infty
$$
is unique.
\end{itemize}
\end{lemma}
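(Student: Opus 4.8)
The plan is to prove both assertions by a single Lyapunov--Perron fixed-point argument, mirroring the forward Lemma \ref{posi-isomorphism} but with the roles of the expanding and contracting spectral subspaces reversed. I would work in the weighted space of backward sequences
$$
\mathcal{B}_\lambda=\Big\{\,v=\{v(n)\}_{n\le0}\subset X\ \Big|\ \|v\|_\lambda:=\sup_{n\le0}\lambda^{-n}\|v(n)\|<\infty\,\Big\},
$$
and split every term as $v(n)=Pv(n)+Qv(n)=:p(n)+q(n)$. The spectral gap $(a,b)\cap\Lambda=\emptyset$ gives $\sigma(U)\subset\{|z|\ge b\}$ and $\sigma(V)\subset\{|z|\le a\}$, so $U$ is invertible with $\|U^{-m}\|$ decaying geometrically at a rate strictly below $1/\lambda$ (since $\lambda<b$), while $\|V^{m}\|$ decays geometrically at a rate strictly below $\lambda$ (since $a<\lambda$). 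These two estimates are exactly what make the relevant series converge in $\|\cdot\|_\lambda$.

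First I would rewrite \eqref{nega-Sp and Rn-oper-sequen} componentwise as $p(k+1)=Up(k)+PR_kv(k)$ and $q(k+1)=Vq(k)+QR_kv(k)$ for $k<0$, and solve each by the appropriate discrete variation-of-constants formula. Because $U$ contracts backward, I solve the $P$-component by iterating $U^{-1}$ downward from $n=0$, getting the finite sum
$$
p(n)=U^{n}P\psi-\sum_{j=n}^{-1}U^{\,n-1-j}\,PR_j\,v(j),\qquad n\le0,
$$
in which the free term $U^{n}P\psi$ is admissible in $\|\cdot\|_\lambda$ precisely by the backward contraction of $U$. Because $V$ contracts forward and may fail to be invertible, I solve the $Q$-component by summing the inhomogeneity over the entire past,
$$
q(n)=\sum_{j=-\infty}^{n-1}V^{\,n-1-j}\,QR_j\,v(j),\qquad n\le0,
$$
a series converging in $\|\cdot\|_\lambda$ by the geometric decay of $\|V^{m}\|$. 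Adding the two pieces defines, for each prescribed value $p_0=P\psi\in P(X)$, an affine map $\mathcal{T}$ on $\mathcal{B}_\lambda$ whose fixed points are exactly the bounded backward solutions with $Pv(0)=p_0$.

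The decisive estimate is that the Lipschitz constant of $\mathcal{T}$ in $\|\cdot\|_\lambda$ is governed by the quantity $\delta(\lambda,R)$ of the statement: bounding $\|U^{\,n-1-j}PR_jv(j)\|\le\|U^{\,n-1-j}PR_j\|\,\|v(j)\|$ and $\|V^{\,n-1-j}QR_jv(j)\|\le\|V^{\,n-1-j}QR_j\|\,\|v(j)\|$, then reindexing by $k=j+1$, turns the two sums above into precisely the two sums defining $\delta(\lambda,R)$. Hypothesis \eqref{nega-control-1} then makes $\mathcal{T}$ a contraction, so by the Banach fixed-point theorem it has a unique fixed point $v(\cdot;p_0)$ for every choice of $p_0$. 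This uniqueness is assertion (ii): any two bounded backward solutions sharing the same value $\psi$ at $n=0$ have the same $P$-datum $P\psi$ and are both fixed points of the same contraction, hence coincide.

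For (i) I read the isomorphism off the construction. The assignment $p_0\mapsto v(0;p_0)=:\psi$ is linear (the constant term $U^{n}p_0$ depends linearly on $p_0$ and the homogeneous map is a contraction, so $p_0=0$ forces $v\equiv0$), bounded (the contraction estimate is uniform), lands in $F$, and satisfies $P\psi=p_0$; conversely, if $\psi\in F$ its backward orbit is a fixed point of $\mathcal{T}$ with datum $P\psi$, whence $\psi=v(0;P\psi)$. Thus $\psi\mapsto P\psi$ and $p_0\mapsto v(0;p_0)$ are mutually inverse bounded linear maps between $F$ and $P(X)$, giving the isomorphism and $\dim F=\dim P(X)$; closedness of $F$ follows since it is the image of the closed spectral subspace $P(X)$ under a topological isomorphism. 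I expect the main obstacle to be the index bookkeeping that identifies the contraction constant with $\delta(\lambda,R)$ — in particular choosing the correct summation ranges (finite for the $P$-component, two-sided infinite over the past for the $Q$-component) and verifying, via the geometric spectral bounds, that both series converge in $\|\cdot\|_\lambda$; once $\delta(\lambda,R)<1$ is secured, the remainder is a routine application of the contraction principle, exactly parallel to Lemma \ref{posi-isomorphism}.
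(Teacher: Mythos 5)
Your Lyapunov--Perron argument is correct, and in fact the paper gives no proof of this lemma at all --- it is quoted verbatim from \cite[Appendices B and C]{92Chen} --- while your construction (solving the $P$-component by backward iteration of $U^{-1}$ from $n=0$, solving the $Q$-component by summing the inhomogeneity over the entire past, and reindexing $k=j+1$ to identify the contraction constant of $\mathcal{T}$ with $\delta(\lambda,R)$) is precisely the standard argument behind the cited result, with all the key verifications (admissibility of the free term $U^{n}p_{0}$ in $\|\cdot\|_{\lambda}$, $P(\mathcal{T}v)(0)=p_{0}$, and fixed points satisfying the recursion) in order. The only point worth making explicit is that the $n$-dependent sums defining $\delta(\lambda,R)$ must be read with an implicit supremum over $n\le 0$ for the contraction estimate to be uniform, which is how you (correctly, and as intended in \cite{92Chen}) use it.
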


\begin{lemma}\label{nega-norm-control}
Assume all the conditions in Lemma \ref{nega-isomorphism} are hold except \eqref{nega-control-1}. Then, for any $\lambda\in(a,b),$ there exists a constant $M(\lambda)>0$ such that
$$
\delta(\lambda,R)\leq M(\lambda)\sup_{n<0}\parallel R_{n}\parallel.
$$
where $\delta(\lambda,R)$ is as in Lemma \ref{nega-isomorphism}. Therefore, choose $\lambda\in(a,b)$, then $\delta(\lambda,R)<1$ for $\sup_{n\leq 0}\parallel R_{n}\parallel$  sufficiently small.
\end{lemma}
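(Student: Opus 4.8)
The plan is to bound each of the two series that make up $\delta(\lambda,R)$ in Lemma \ref{nega-isomorphism} by a convergent geometric series whose common ratio is strictly less than one, pulling $\sup_{n<0}\|R_n\|$ out in front. Everything rests on the spectral gap $(a,b)\cap\Lambda=\emptyset$. Since $\mu\in(a,b)$, the spectrum splits as $\sigma(U)=\sigma(S_p)\cap\{|z|>\mu\}\subset\{|z|\ge b\}$ and $\sigma(V)=\sigma(S_p)\cap\{|z|<\mu\}\subset\{|z|\le a\}$, so the spectral radii satisfy $r(V)\le a$ and (because $0\notin\sigma(U)$, so $U$ is invertible) $r(U^{-1})\le 1/b$. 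Fixing $\lambda\in(a,b)$, I would first choose $\eta\in(a,\lambda)$ and $\theta\in(1/b,1/\lambda)$; this is possible precisely because the gap forces $a<\lambda<b$, hence $1/b<1/\lambda$.

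Next I would invoke Gelfand's formula (equivalently the estimate $\|T^{j}\|\le C(r(T)+\varepsilon)^{j}$) to produce constants $C_{1},C_{2}>0$, depending only on $S_p$, $\mu$, and the chosen $\eta,\theta$, with $\|V^{j}\|\le C_{1}\eta^{j}$ and $\|U^{-j}\|\le C_{2}\theta^{j}$ for all $j\ge0$. Substituting these into the definition of $\delta(\lambda,R)$ (understood, as in the forward case Lemma \ref{posi-isomorphism}, as the supremum over the free index $n<0$), the first series reindexes by $m=k-n\ge1$: each term $\lambda^{k-n-1}\|U^{n-k}PR_{k-1}\|$ is dominated by $\|P\|\,C_{2}\,\theta(\lambda\theta)^{m-1}\sup_{j}\|R_{j}\|$, and since $\lambda\theta<1$ this sums (uniformly in $n$) to $\|P\|\,C_{2}\,\theta/(1-\lambda\theta)\cdot\sup_{j}\|R_{j}\|$. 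The second series reindexes by $p=n-k\ge0$: each term $\lambda^{k-n-1}\|V^{n-k}QR_{k-1}\|$ is dominated by $\|Q\|\,C_{1}\,\lambda^{-1}(\eta/\lambda)^{p}\sup_{j}\|R_{j}\|$, and since $\eta<\lambda$ this sums to $\|Q\|\,C_{1}\,\lambda^{-1}/(1-\eta/\lambda)\cdot\sup_{j}\|R_{j}\|$.

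Collecting the two bounds yields $\delta(\lambda,R)\le M(\lambda)\sup_{n<0}\|R_{n}\|$ with $M(\lambda)=\|P\|\,C_{2}\,\theta/(1-\lambda\theta)+\|Q\|\,C_{1}\,\lambda^{-1}/(1-\eta/\lambda)$, a constant that depends on $\lambda$ (through $\mu,\eta,\theta,C_{1},C_{2},P,Q$) but is independent of the sequence $\{R_{n}\}$. The final assertion is then immediate: fixing such a $\lambda$, the inequality $\delta(\lambda,R)<1$ holds whenever $\sup_{n\le0}\|R_{n}\|<1/M(\lambda)$, which is exactly the hypothesis \eqref{nega-control-1} needed to apply Lemma \ref{nega-isomorphism}.

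The main obstacle I anticipate is bookkeeping rather than conceptual: one must check that the index ranges $n+1\le k\le0$ and $k\le n$ (with $n<0$) combine with the two geometric decays so that the reindexed series are genuinely summable \emph{uniformly in} $n$, and that the estimate for the negative powers $U^{-j}$ is legitimate via $0\notin\sigma(U)$. Since this is the exact backward mirror of the forward estimate in Lemma \ref{posi-norm-control}, the computation is symmetric and introduces no new phenomenon, so the verification reduces to tracking signs and exponents carefully.
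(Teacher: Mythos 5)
Your proof is correct and matches the intended argument: the paper itself gives no proof of this lemma, quoting it directly from \cite[Appendix B and C]{92Chen}, where the estimate is obtained exactly as you do --- splitting via the spectral projections, using the gap $(a,b)\cap\Lambda=\emptyset$ together with Gelfand's formula to get $\|V^{p}\|\leq C_{1}\eta^{p}$ and $\|U^{-m}\|\leq C_{2}\theta^{m}$ (with $U$ invertible since $\sigma(U)\subset\{|z|\geq b\}$, $b>0$), and summing the two geometric series uniformly in $n$. Your index bookkeeping ($m=k-n\geq 1$ for the $U$-sum, $p=n-k\geq 0$ for the $V$-sum, so only $R_{k-1}$ with $k-1<0$ appear) is accurate, and the final deduction that $\delta(\lambda,R)<1$ once $\sup_{n\leq 0}\|R_{n}\|<1/M(\lambda)$ is immediate.
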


Assume that
\begin{itemize}
  \item
      [\rm{(S.1)}]$\mathbb{U}\subset X$ is an open set;
  \item [\rm{(S.2)}]$P:\mathbb{U}\rightarrow X$ is a $C^{1}$ map.
\end{itemize}

The following two lemmas concerned with invariant manifold theory.
\begin{lemma}\label{unstable-manif}
Assume (S.1)-(S.2) and let $\varphi\in\mathbb{U}$ be a fixed point of $P$. Suppose that $\sigma(DP(\varphi))=\sigma_{1}\cup\sigma_{2}$ with $\sigma_{1}=\{z\in\mathbb{C}\mid|z|\geq\alpha_{1}\}$ and $\sigma_{2}=\{z\in\mathbb{C}\mid|z|\leq\alpha_{2}\}$ for some $0\leq \alpha_{2}<1<\alpha_{1}\leq \infty$. Decompose $X=X_{1}\oplus X_{2}$ to $DP(\varphi)$-invariant subsequences of $X$ corresponding to spectral sets $\sigma_{1}$ and $\sigma_{2}$. Then there exist a neighborhood $V_{1}$ of 0 in $X_{1}$, a neighborhood $V$ of 0 in $X$, and a $C^{1}$ map $g:V_{1}\rightarrow X_{2}$ such that the $C^{1}$ submanifold of $X$
\begin{equation}
W_{loc}^{u}=W_{loc}^{u}(\varphi):=\{\varphi+a+g(a)\mid a\in V_{1}\}
\end{equation}
satisfies the following \rm{(i)-(v)}:
\begin{itemize}
  \item [\rm{(i)}]for any $x\in W_{loc}^{u}$, there is a negative semiorbit $\{x_{n}\}_{n\leq0}$ through $x_{0}=x$ such that $x_{n}-\varphi\in V, x_{n}\in W_{loc}^{u}(n\leq0)$ and that
\begin{equation}\label{xn-sequen}
\limsup_{n\rightarrow-\infty}\parallel x_{n}-\varphi\parallel^{1/|n|}\leq\alpha_{1}^{-1};
\end{equation}
  \item [\rm{(ii)}]if $\{x_{n}\}_{n\leq0}$ is a negative semiorbit with $x_{n}-\varphi\in V\cap(V_{1}+X_{2})(n\leq0)$, then $\{x_{n}\}_{n\leq0}\subset W_{loc}^{u}$ and \eqref{xn-sequen} holds true;
  \item [\rm{(iii)}]$X_{1}$ is the tangent space of $W^{u}_{loc}$ at $\varphi: T_{\varphi}W^{u}_{loc}=X_{1}$;
  \item [\rm{(iv)}]if $\{x_{n}\}_{n\leq0}\subset W^{u}_{loc}$ is a negative semiorbit and if $y_{0}\in T_{x_{0}}W^{u}_{loc}$, then there exist $\{y_{n}\}_{n\leq0}\subset X$ such that $y_{n+1}=DP(x_{n})y_{n} (n<0), y_{n}\in T_{x_{n}}W^{u}_{loc}(n<0)$ and
\begin{equation}\label{yn-sequen}
\limsup_{n\rightarrow-\infty}\parallel y_{n}\parallel^{1/|n|}\leq\alpha_{1}^{-1};
\end{equation}
  \item [\rm{(v)}] if $\{x_{n}\}_{n\leq0}\subset W^{u}_{loc}$ is a negative semiorbit and $\{y_{n}\}_{n\leq0}\subset X$ is such that $y_{n+1}=DP(x_{n})y_{n}(n<0)$ and
\begin{equation}
\limsup_{n\rightarrow-\infty}\parallel y_{n}\parallel^{1/|n|}<\alpha_{2}^{-1},
\end{equation}
then $y_{n}\in T_{x_{n}}W_{loc}^{u}(n\leq0)$ and \eqref{yn-sequen} holds true.
\end{itemize}
\end{lemma}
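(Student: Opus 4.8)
The plan is to prove Lemma \ref{unstable-manif} by the Lyapunov--Perron method applied to backward orbits, followed by a fiber--contraction argument for $C^{1}$ regularity. First I would normalize the fixed point to the origin via $x\mapsto x-\varphi$ and write $P(x)=Lx+N(x)$, where $L=DP(\varphi)$ and $N$ is the nonlinear remainder with $N(0)=0$, $DN(0)=0$. Using a smooth cut-off $\chi$ supported near $0$, I replace $N$ by $\tilde N(x)=\chi(\|x\|/r)N(x)$, so that $\tilde N$ agrees with $N$ on a ball $B_{r}$, is globally Lipschitz, and has $\mathrm{Lip}(\tilde N)\to 0$ as $r\to 0$ (because $DN(0)=0$); the manifold built for the modified map, intersected with a small neighborhood, will be $W^{u}_{loc}$. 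Along $X=X_{1}\oplus X_{2}$ with bounded projections $\Pi_{1},\Pi_{2}$, set $L_{1}=L|_{X_{1}}$ and $L_{2}=L|_{X_{2}}$; since $\alpha_{1}>1>0$ we have $0\notin\sigma_{1}$, so $L_{1}$ is invertible with $\|L_{1}^{-n}\|\lesssim\alpha_{1}^{-n}$, while $\|L_{2}^{n}\|\lesssim\alpha_{2}^{n}$ for $n\ge 0$. Fix a rate $\beta\in(1,\alpha_{1})$ (which also satisfies $\beta>\alpha_{2}$).

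Next I would set up the weighted space of backward sequences $\mathcal B_{\beta}=\{\mathbf x=\{x_{n}\}_{n\le 0}\subset X:\ \|\mathbf x\|_{\beta}:=\sup_{n\le 0}\beta^{-n}\|x_{n}\|<\infty\}$ and, for prescribed unstable data $a=\Pi_{1}x_{0}\in X_{1}$, the Lyapunov--Perron operator
\begin{equation}
(\mathcal T_{a}\mathbf x)_{n}=L_{1}^{n}a-\sum_{k=n}^{-1}L_{1}^{\,n-1-k}\,\Pi_{1}\tilde N(x_{k})+\sum_{k=-\infty}^{n-1}L_{2}^{\,n-1-k}\,\Pi_{2}\tilde N(x_{k}),\qquad n\le 0 .
\end{equation}
The two sums converge in $\mathcal B_{\beta}$ precisely because $\beta\in(\alpha_{2},\alpha_{1})$, and a fixed point $\mathbf x=\mathcal T_{a}\mathbf x$ is exactly a backward orbit of the modified map with $\Pi_{1}x_{0}=a$ lying in $\mathcal B_{\beta}$. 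Once $r$ (hence $\mathrm{Lip}(\tilde N)$) is small, $\mathcal T_{a}$ is a contraction on $\mathcal B_{\beta}$, so the contraction mapping principle yields a unique fixed point $\mathbf x(a)$, and I set $g(a):=\Pi_{2}x_{0}(a)$. The graph $\{a+g(a):a\in V_{1}\}$, translated back by $\varphi$, is the candidate $W^{u}_{loc}$. The membership $\mathbf x(a)\in\mathcal B_{\beta}$ gives the backward decay in \eqref{xn-sequen}, and letting $\beta\uparrow\alpha_{1}$ sharpens the exponent to $\alpha_{1}^{-1}$, which is (i); the uniqueness of the fixed point, together with the fact that any backward semiorbit staying in $V\cap(V_{1}+X_{2})$ automatically lies in some $\mathcal B_{\beta}$, gives the characterization (ii).

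For the regularity I would invoke the uniform/fiber-contraction principle: $(a,\mathbf x)\mapsto\mathcal T_{a}\mathbf x$ is jointly $C^{1}$ and a uniform contraction in $\mathbf x$, so $a\mapsto\mathbf x(a)$ is Lipschitz, and differentiating the fixed-point relation shows $D_{a}\mathbf x$ is itself the fixed point of the linearized (again contracting) operator, whence $g\in C^{1}$. Evaluating at $a=0$, the conditions $N(0)=0$, $DN(0)=0$ force $\mathbf x(0)\equiv 0$ and annihilate every nonlinear term in the linearization at $0$, giving $Dg(0)=0$; hence $T_{\varphi}W^{u}_{loc}$ is the graph of $Dg(0)$, namely $X_{1}$, which is (iii). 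Items (iv)--(v) concern the linearized cocycle $y_{n+1}=DP(x_{n})y_{n}$ along an orbit $\{x_{n}\}\subset W^{u}_{loc}$; I would treat these as the linear analogue of the above. The same Lyapunov--Perron construction applied to this asymptotically hyperbolic linear system identifies the solutions decaying backward at rate $\alpha_{1}^{-1}$ with an invariant subbundle, which coincides with $\{T_{x_{n}}W^{u}_{loc}\}$ since differentiating the local invariance of $W^{u}_{loc}$ gives $DP(x_{n})\,T_{x_{n}}W^{u}_{loc}=T_{x_{n+1}}W^{u}_{loc}$; the resulting growth dichotomy yields \eqref{yn-sequen} in (iv) and the converse inclusion in (v) (a cocycle solution growing backward slower than $\alpha_{2}^{-1}$ can carry no $X_{2}$-component, so it must be tangent and then decays at rate $\alpha_{1}^{-1}$).

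The main obstacle I expect is the $C^{1}$ regularity of $g$: existence, uniqueness, Lipschitz dependence and all the decay rates follow almost mechanically from the contraction, but upgrading Lipschitz to $C^{1}$ requires the fiber-contraction theorem together with a careful check that the formal derivative of the fixed point genuinely solves the linearized fixed-point equation in the weighted norm (the sums defining $D_{a}\mathbf x$ must converge in $\mathcal B_{\beta}$, which again hinges on $\beta\in(\alpha_{2},\alpha_{1})$). A secondary technical point is extracting the \emph{sharp} exponents $\alpha_{1}^{-1}$ and $\alpha_{2}^{-1}$ in \eqref{xn-sequen}--\eqref{yn-sequen}, handled by running the construction for a family of rates $\beta$ and passing to the limit.
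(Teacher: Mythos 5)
The paper offers no proof of this lemma at all: the Appendix states explicitly that it is quoted from \cite[Appendix C]{92Chen}, so there is no internal argument to compare against, and your task was in effect to reprove a known invariant-manifold theorem from scratch. Your Lyapunov--Perron outline is the standard route and is essentially correct: the operator $\mathcal{T}_a$ is the right one (both sums converge precisely because $\beta\in(\alpha_2,\alpha_1)$, using invertibility of $L_1$ on $X_1$, which follows from $0\notin\sigma_1$ since $\alpha_1>1$); the contraction for small cutoff radius yields the graph $g$; the fiber-contraction theorem gives $g\in C^1$ with $Dg(0)=0$, hence (iii); and your mechanism for (iv)--(v) --- backward solvability inside the tangent bundle because the $X_1$-component of the linearized cocycle is a small perturbation of the invertible $L_1$, while a nonzero $X_2$-component of a backward cocycle solution forces $\limsup_{n\to-\infty}\|y_n\|^{1/|n|}\geq\alpha_2^{-1}$ --- is the correct dichotomy. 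One refinement for (ii): what a semiorbit confined to $V\cap(V_1+X_2)$ gives you for free is membership in the space of \emph{bounded} backward sequences, not decay; since $1\in(\alpha_2,\alpha_1)$ you can run the contraction in the bounded-sequence space as well, and uniqueness of the fixed point across the nested spaces $\mathcal{B}_\beta\subset\mathcal{B}_1$ then upgrades boundedness to decay. That is the precise version of your remark that such orbits ``automatically lie in some $\mathcal{B}_\beta$.''

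Two points in your sketch need repair. First, the radial cutoff $\tilde N(x)=\chi(\|x\|/r)N(x)$ is illegitimate in a general Banach space, which is the setting of the lemma: the norm is typically not $C^1$, so $\tilde N$ would fail to be $C^1$ and the fiber-contraction step collapses. One must instead cut off only in the $X_1$-variable, or use a smooth bump function, which does exist in the paper's actual phase space $X^{\alpha}=H^{2\alpha}(S^1)$ because it is a Hilbert space. Second, ``letting $\beta\uparrow\alpha_1$'' is not free: the constant $C_\beta$ in $\|L_1^{-n}\|\leq C_\beta\beta^{-n}$ blows up as $\beta\to\alpha_1$, so the smallness of $\mathrm{Lip}(\tilde N)$ (i.e.\ the cutoff radius $r$) required for the contraction depends on $\beta$, and a single fixed manifold does not a priori carry every rate. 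You can bootstrap (the orbit tends to $\varphi$ backward, so its tail enters arbitrarily small balls where the $\beta'$-construction and statement (ii) apply), but the cleaner argument --- and the one consonant with the linear machinery this paper lists --- is: $v(n)=x_n-\varphi$ satisfies $v(n+1)=DP(\varphi)v(n)+\xi_n$ with $\|\xi_n\|/\|v(n)\|\to0$ (since $P$ is $C^1$ and $v(n)\to0$), so the backward version of Lemma \ref{projection-lem} (via Lemma \ref{conver-nega-seque}) applied on $[a,b]=[\alpha_2+\varepsilon,\alpha_1-\varepsilon]$ rules out the alternative $\limsup_{n\to-\infty}\|v(n)\|^{1/n}\leq a$ (which, as $a<1$ and $n<0$, would force $\|v(n)\|\to\infty$) and yields $\liminf_{n\to-\infty}\|v(n)\|^{1/n}\geq\alpha_1-\varepsilon$, i.e.\ \eqref{xn-sequen} after $\varepsilon\downarrow0$; the same device, with $\xi_n=(DP(x_n)-DP(\varphi))y_n$, sharpens \eqref{yn-sequen}. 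With these two repairs your proposal is a complete proof.
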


\begin{lemma}\label{stable-manif}
Assume conditions in Lemma \ref{unstable-manif} are hold. Then there exist a neighborhood $V_{2}$ of 0 in $X_{2}$, a neighborhood $V$ of 0 in $X$, and a $C^{1}$ map $h:V_{2}\rightarrow X_{1}$ such that the $C^{1}$ submanifold
\begin{equation}
W_{loc}^{s}=W_{loc}^{s}(\varphi):=\{\varphi+b+h(b)\mid b\in V_{2}\}
\end{equation}
satisfies the following \rm{(i)-(v)}:
\begin{itemize}
  \item [\rm{(i)}]if $x\in W_{loc}^{s}$, then $P^{n}x\in W_{loc}^{s}$ and $P^{n}x-\varphi\in V(n\geq0)$, and
\begin{equation}\label{Pn-sequen}
\limsup_{n\rightarrow\infty}\parallel P^{n}x-\varphi\parallel^{1/n}\leq\alpha_{2};
\end{equation}
  \item [\rm{(ii)}]if $\{x_{n}\}_{n\geq0}$ is a positive semiorbit with $x_{n}-\varphi\in V\cap(V_{2}+X_{1})(n\geq0)$, then $\{x_{n}\}\in W_{loc}^{s}(n\geq0)$ and \eqref{Pn-sequen} holds true;
  \item [\rm{(iii)}]$T_{\varphi}W^{s}_{loc}=X_{2}$;
  \item [\rm{(iv)}]if $\{x_{n}\}_{n\geq0}\subset W^{s}_{loc}$ is a positive semiorbit and  $y_{0}\in T_{x_{0}}W^{s}_{loc}$, then the sequence $\{y_{n}\}_{n\geq0}$ defined by $y_{n+1}=DP(x_{n})y_{n} (n\geq0)$ satisfies that $y_{n}\in T_{x_{n}}W^{s}_{loc}$ and
\begin{equation}\label{stable-yn-sequen}
\limsup_{n\rightarrow\infty}\parallel y_{n}\parallel^{1/n}\leq\alpha_{2};
\end{equation}
  \item [\rm{(v)}]if $\{x_{n}\}_{n\geq0}\subset W^{s}_{loc}$ is a positive semiorbit and $\{y_{n}\}_{n\geq0}$ is such that $y_{n+1}=DP(x_{n})y_{n}(n\geq0)$ and
\begin{equation}
\limsup_{n\rightarrow\infty}\parallel y_{n}\parallel^{1/n}<\alpha_{1},
\end{equation}
then $y_{n}\in T_{x_{n}}W_{loc}^{s}(n\geq0)$ and \eqref{stable-yn-sequen} holds true.
\end{itemize}
\end{lemma}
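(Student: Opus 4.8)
The plan is to establish Lemma~\ref{stable-manif} as the forward-time analogue of the unstable manifold Lemma~\ref{unstable-manif}, constructing $W^s_{loc}$ by a Lyapunov--Perron fixed point argument and then reading off properties (i)--(v) from the linear forward-sequence lemmas already recorded in the Appendix. First I would write $P$ in coordinates centred at $\varphi$: set $A=DP(\varphi)$ and $P(\varphi+w)=\varphi+Aw+N(w)$, so that $N(0)=0$ and $DN(0)=0$. Using the splitting $X=X_1\oplus X_2$ into the $A$-invariant subspaces for $\sigma_1$ and $\sigma_2$, write $U=A|_{X_1}$ and $V=A|_{X_2}$; since $\sigma_1\subset\{z\in\mathbb{C}\mid|z|\ge\alpha_1>1\}$, $U$ is boundedly invertible with the spectral radius of $U^{-1}$ at most $\alpha_1^{-1}$, while the spectral radius of $V$ is at most $\alpha_2<1$. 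Shrinking to a small neighbourhood of $0$ and using continuity of $DN$, I may assume that the Lipschitz constant of $N$ there is as small as needed.

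Next I would set up the Lyapunov--Perron operator on the weighted space of forward sequences $\{w_n\}_{n\ge0}$ with $\sup_{n\ge0}\lambda^{-n}\|w_n\|<\infty$ for a fixed $\lambda\in(\alpha_2,\alpha_1)$; such a $\lambda$ exists because $(\alpha_2,\alpha_1)\cap\Lambda=\emptyset$. For a prescribed stable datum $b\in X_2$, the bounded forward orbits of $w_{n+1}=Aw_n+N(w_n)$ are precisely the solutions of the equation obtained by summing the stable part forward and the unstable part backward,
\[
w_n=V^n b+\sum_{k=0}^{n-1}V^{\,n-1-k}Q\,N(w_k)-\sum_{k=n}^{\infty}U^{\,n-1-k}P\,N(w_k),
\]
where $P=P(\lambda)$ and $Q=Q(\lambda)$ are the spectral projections introduced in the Appendix. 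The map sending $\{w_n\}$ to the right-hand side is a contraction once the Lipschitz constant of $N$ is small, the relevant smallness being exactly the condition $\delta(\lambda,R)<1$ guaranteed by Lemma~\ref{posi-norm-control}; its unique fixed point $\{w_n(b)\}$ gives the orbit, and $h(b):=P\,w_0(b)$ defines $W^s_{loc}=\{\varphi+b+h(b)\mid b\in V_2\}$. The linear backbone of this step is Lemma~\ref{posi-isomorphism}, which already asserts that the set of $\lambda$-bounded forward solutions projects isomorphically onto $Q(X)=X_2$, so that $W^s_{loc}$ is genuinely a graph over $X_2$ of the correct codimension.

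Regularity of $h$, together with the identities $h(0)=0$ and $Dh(0)=0$ that give (iii), follows from the uniform (fibre) contraction principle applied to the parameter $b$, using the $C^1$ dependence of $N$. Properties (i) and (ii)---forward invariance, the decay bound $\limsup_{n\to\infty}\|P^nx-\varphi\|^{1/n}\le\alpha_2$, and the fact that any forward semiorbit staying in $V\cap(V_2+X_1)$ must lie on $W^s_{loc}$---are immediate from the construction and from uniqueness of the fixed point. For (iv) and (v) I would apply the same forward-sequence lemmas to the variational equation $y_{n+1}=DP(x_n)y_n$ along an orbit $\{x_n\}\subset W^s_{loc}$: writing $DP(x_n)=A+R_n$ with $R_n=DP(x_n)-A\to0$ (because $x_n\to\varphi$ and $DN$ is continuous), Lemma~\ref{posi-norm-control} yields $\delta(\lambda,R)<1$, the dichotomy of Lemma~\ref{projection-lem} forces every variational solution with growth rate below $\alpha_1$ to have rate at most $\alpha_2$, and Lemma~\ref{posi-isomorphism} identifies the resulting $\lambda$-bounded fibre as a graph over $X_2$ that coincides with $T_{x_n}W^s_{loc}$.

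The \textbf{main obstacle} is the pair (iv)--(v): one must show that $T_{x_n}W^s_{loc}$ is exactly the fibre of $\lambda$-bounded variational solutions and is invariant under $DP(x_n)$. This requires differentiating the graph relation $w^{(1)}_n=h(w^{(2)}_n)$ along the orbit and matching it with the linear dichotomy, and it is precisely here that the spectral gap $\alpha_2<\alpha_1$ and the uniform smallness of $R_n$ along the orbit (through Lemma~\ref{posi-norm-control}) enter in an essential, quantitative way; the nonlinear construction underlying (i)--(iii) is comparatively routine once the adapted weighted norm is fixed.
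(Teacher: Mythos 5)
Your construction is sound, but note that the paper itself contains no proof of this lemma: it is quoted, together with Lemmas \ref{projection-lem}--\ref{nega-norm-control}, from \cite[Appendix C]{92Chen}, and the argument given there is essentially your Lyapunov--Perron scheme --- the graph $h$ over $X_{2}$ obtained as the fixed point of the forward-sum operator $w_{n}\mapsto V^{n}b+\sum_{k=0}^{n-1}V^{n-1-k}QN(w_{k})-\sum_{k=n}^{\infty}U^{n-1-k}PN(w_{k})$, with (iv)--(v) settled exactly as you propose, by applying the forward-sequence lemmas (Lemma \ref{projection-lem}, Lemma \ref{posi-isomorphism}, Lemma \ref{posi-norm-control}) to $DP(x_{n})=DP(\varphi)+R_{n}$, $\|R_{n}\|$ uniformly small on a shrunken neighborhood, and identifying the $\lambda$-bounded variational fibre, a graph over $X_{2}$, with $T_{x_{n}}W_{loc}^{s}$. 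The one step to make explicit is the standard cutoff modification of the nonlinearity $N$ outside a small ball (available here since $X^{\alpha}=H^{2\alpha}(S^{1})$ is a Hilbert space, so smooth cutoffs exist), which turns your local Lipschitz-smallness into the global smallness actually needed for the contraction on the weighted sequence space, rather than smallness only along the as-yet-unconstructed orbit.
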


\section*{Acknowledgments}
Dun Zhou was  partially supported by National Science Foundation of China under grants (No. 11971232, 12071217).

\end{document}